\documentclass{article}
\usepackage{fullpage,enumitem}
\usepackage[margin=1in]{geometry}
\usepackage[dvipsnames]{xcolor}
\usepackage{cite}
\usepackage{hyperref}
\hypersetup{
   colorlinks=true,
   linkcolor=red,
   citecolor=blue,
   hypertexnames=false
}

\usepackage{comment}

\usepackage[T1]{fontenc}
\usepackage[nopatch,expansion=false]{microtype}
\usepackage[english]{babel}
\usepackage{amsmath,amssymb,amsthm,amsbsy,amsfonts,algorithm,algorithmic}
\usepackage{times}
\usepackage{colortbl, booktabs, cleveref}
\usepackage{float,graphicx}

\newcommand{\tmop}[1]{\ensuremath{\operatorname{#1}}}

\newtheorem{proposition}{Proposition}[section]
\newtheorem{lemma}{Lemma}[section]
\newtheorem{theorem}{Theorem}[section]
\newtheorem{definition}{Definition}[section]
\newtheorem{assumption}{Assumption}[section]
\theoremstyle{remark}
\newtheorem{remark}{Remark}[section]

\crefname{proposition}{Proposition}{Propositions}
\crefname{lemma}{Lemma}{Lemmas}
\crefname{theorem}{Theorem}{Theorems}
\crefname{definition}{Definition}{Definitions}
\crefname{assumption}{Assumption}{Assumptions}
\crefname{remark}{Remark}{Remarks}
\crefname{algorithm}{Algorithm}{Algorithms}
\crefname{section}{Section}{Sections}
\crefname{figure}{Figure}{Figures}
\crefname{equation}{}{}

\newcommand{\vct}[1]{\mathbf{#1}}
\newcommand{\mtx}[1]{\mathbf{#1}}
\newcommand{\set}[1]{\mathcal{#1}}

\def \vlambda {{\boldsymbol{\lambda}}}
\def \valpha  {{\boldsymbol{\alpha}}}
\def \vbeta   {{\boldsymbol{\beta}}}
\def \vgamma  {{\boldsymbol{\gamma}}}

\def \mGamma  {{\boldsymbol{\Gamma}}}
\def \mPhi    {{\boldsymbol{\Phi}}}

\def \vb {\vct{b}}

\def \vd {\vct{d}}

\def \vu {\vct{u}}
\def \vv {\vct{v}}
\def \vw {\vct{w}}
\def \vx {\vct{x}}
\def \vy {\vct{y}}
\def \vz {\vct{z}}

\def \mA {\mtx{A}}
\def \mB {\mtx{B}}
\def \mC {\mtx{C}}
\def \mD {\mtx{D}}
\def \mF {\mtx{F}}
\def \mH {\mtx{H}}
\def \mJ {\mtx{J}}
\def \mL {\mtx{L}}
\def \mM {\mtx{M}}
\def \mN {\mtx{N}}
\def \mP {\mtx{P}}
\def \mQ {\mtx{Q}}
\def \mR {\mtx{R}}
\def \mS {\mtx{S}}
\def \mT {\mtx{T}}
\def \mU {\mtx{U}}
\def \mV {\mtx{V}}
\def \mW {\mtx{W}}
\def \mX {\mtx{X}}
\def \mY {\mtx{Y}}
\def \mZ {\mtx{Z}}

\def \calA {\set{A}}
\def \calL {\set{L}}
\def \calT {\set{T}}

\DeclareMathOperator*{\minimize}{minimize}
\DeclareMathOperator*{\argmin}{argmin}
\def \st { \;\; \mbox{s.t.} \;\; }
\def \nn       {\nonumber}

\def \eye      {\mathbf{I}}
\def \vzero    {\vct{0}}
\def \mzero    {\mtx{0}}
\def \lg       {\left\langle}
\def \rg       {\right\rangle}
\def \Null     {\tmop{null}}
\def \kr       {\circledast}
\def \R        {\mathbb{R}}

\usepackage[toc, page]{appendix}

\title{Second-Order KKT Guarantees for \\ Bregman ADMM in Nonconvex and Non-Lipschitz Optimization}

\author{Shuang Li, Zhihui Zhu, and Qiuwei Li\thanks{SL is with the Department of Electrical and Computer Engineering, Iowa State University, Ames, Iowa 50014, USA. ZZ is with the Department of Computer Science and Engineering, Ohio State University, Columbus, Ohio 43210, USA. QL is with DAMO Academy, Alibaba Group US, Bellevue, Washington 98004, USA.}}

\date{\today}

\begin{document}
\setlength{\emergencystretch}{2em}
\hfuzz=1pt

\allowdisplaybreaks
\maketitle

\begin{abstract}
We analyze Bregman ADMM for nonconvex linearly constrained problems under two-sided relative smoothness, a condition that replaces the standard Lipschitz gradient assumption with a Hessian comparison relative to a Bregman kernel. This setting covers polynomial objectives arising in matrix and tensor models for which a global Lipschitz-gradient constant need not exist. We show that on an invariant open state-space domain, one iteration of Bregman ADMM defines a smooth primal--dual fixed-point map whose strict-saddle KKT points are unstable fixed points; consequently, from random initialization the iterates converge to a strict saddle with probability zero. Combined with existing first-order convergence results, this yields almost-sure second-order stationarity of limiting KKT points. We extend the analysis to a multi-block star consensus formulation for distributed optimization. The technical novelty lies in a determinant reduction with a Bregman-specific symmetrization and scaling step in the two block spectral argument, together with a null space cancellation exploiting the star graph structure in the consensus case. Numerical experiments on distributed matrix factorization illustrate the theory, and a symmetric tensor factorization example demonstrates the broader Bregman proximal splitting idea beyond the separable consensus setting.
\end{abstract}

\section{Introduction}
We consider the following optimization problem
\begin{equation}\begin{split}
	\minimize_{\vx\in\R^{n},\vy\in\R^{m}}&~ f(\vx,\vy)\doteq f_{1}(\vx)+f_{2}(\vy)
	 \st \mA\vx+\mB\vy=\vb,
\end{split}\label{eqn:problem:2}
\end{equation}
where $\mA\in\R^{p\times n}$, $\mB\in\R^{p\times m}$, and $\vb\in\R^p$.
The functions $f_1:\R^n\rightarrow \R$ and $f_2:\R^m\rightarrow \R$ can be nonconvex and their gradients need not be globally Lipschitz continuous.
The augmented Lagrangian is
\begin{equation}\begin{split}
\calL(\vx,\vy,\vlambda)=&f_1(\vx)+f_2(\vy)+\lg \vlambda, \mA\vx+\mB\vy-\vb\rg +\frac{\rho}{2}\|\mA\vx+\mB\vy-\vb\|_{2}^{2},
\end{split}\label{eqn:Lagrange:2}
\end{equation}
where $\vlambda\in\R^{p}$ is the Lagrangian dual variable and $\rho>0$ is the augmented penalty parameter. Throughout the paper, we treat $\rho$ as a fixed constant (i.e., it does not vary with the iteration index).

A standard approach for \eqref{eqn:problem:2} is the alternating direction method of multipliers (ADMM; see \Cref{alg:admm})~\cite{glowinski1975approximation,gabay1976dual}. Each iteration alternates between minimizing over $\vx$ and over $\vy$, followed by a dual ascent step for $\vlambda$.
\begin{algorithm}[H]
\caption{ADMM}
\label{alg:admm}
\begin{algorithmic}[1]

\STATE {\bf Initialization:}  $(\vy^{0},\vlambda^0)$

\STATE {\bf For} $k=1,2, \dots$,
 generate $(\vx^{k},\vy^{k},\vlambda^k)$ by
\vspace{-0.2em}
\begin{equation}
\begin{aligned}
\vx^{k}=& \argmin_{\vx} \calL(\vx,\vy^{k-1},\vlambda^{k-1}),
\\[-.1em]
\vy^{k}=& \argmin_{\vy} \calL(\vx^{k},\vy,\vlambda^{k-1}),	
\\[-.1em]
\vlambda^{k}=& \vlambda^{k-1}+\rho(\mA\vx^{k}+\mB\vy^{k}-\vb).
\end{aligned}
\label{eqn:admm}
\end{equation}
\vspace{-0.8em}
\end{algorithmic}
\end{algorithm}
  
Common variants include proximal ADMM~\cite{xu2007proximal,gonccalves2017convergence}, which adds proximal regularization in the primal updates, and linearized ADMM \cite{hong2018gradient}, which replaces the primal minimizations by alternating gradient-type steps.
These splitting schemes are well suited to structured models where separability can be exploited, with applications spanning low-rank matrix recovery~\cite{xu2012alternating}, tensor factorization, consensus optimization, and beyond; see~\cite{boyd2011distributed} for an overview.

\paragraph{First-order convergence and complexity for nonconvex ADMM.}
For nonconvex linearly constrained problems and related consensus formulations, convergence of ADMM to first order stationary points has been studied under various regularity conditions. \cite{hong2016convergence} establishes convergence for certain nonconvex consensus and sharing formulations under regularity assumptions on the objective and subproblems and sufficiently large augmented Lagrangian penalty parameters, while \cite{wang2014convergence} analyzes a Bregman modification of ADMM for nonconvex composite problems and proves convergence to stationary points of the associated augmented Lagrangian under suitable assumptions. \cite{wang2015global} further proves global convergence of ADMM for nonconvex nonsmooth problems by combining conditions that ensure lower boundedness, sufficient descent and a subgradient bound, together with the Kurdyka {\L}ojasiewicz (K\L{}) inequality. More recently, \cite{barber2024convergence} develops finite iteration convergence guarantees under restricted strong convexity without requiring smoothness or differentiability, and \cite{mancino2023adapd} proposes communication efficient decentralized primal dual methods for smooth nonconvex consensus optimization.

\paragraph{Second-order stationarity and strict-saddle avoidance.}
Moving beyond first-order stationarity is useful because many structured nonconvex problems have benign landscapes in which all second-order stationary points are globally optimal; see, e.g., low-rank matrix recovery~\cite{ge2016matrix} and tensor decomposition~\cite{ge2015escaping}. For ADMM-type methods, \cite{hong2018gradient} uses a dynamical-systems viewpoint and the stable manifold theorem~\cite[Theorem III.7]{shub2013global} to show that linearized ADMM avoids strict saddles almost surely under a Lipschitz-gradient assumption.
Beyond ADMM, strict-saddle avoidance has been extended to broader classes of first-order and proximal-type algorithms. In particular, \cite{panageas2019vanishingsteps} shows that a stable-manifold argument continues to apply under vanishing stepsizes (covering, among others, mirror-descent-type updates), and \cite{davis2022proximal} establishes that proximal methods avoid (active) strict saddles for weakly convex nonsmooth objectives.
We note that \cite{davis2022proximal} works in the nonsmooth weakly convex regime and introduces the notion of \emph{active strict saddle}, which accounts for the combinatorial structure of the subdifferential; their setting and ours are largely complementary: we treat smooth (but non-Lipschitz-smooth) objectives within ADMM splitting, whereas they handle nonsmooth objectives via proximal point iterations.

\paragraph{Bregman geometry and relative smoothness.}
A key limitation of Lipschitz-gradient analyses is that $\|\nabla^2 f\|$ may be unbounded in many applications of interest (e.g., polynomial objectives arising in matrix/tensor models), so the standard smoothness constant does not exist globally. Relative smoothness and Bregman first-order methods \cite{bauschke2016descent,bolte2018first} provide a principled alternative by replacing Euclidean geometry with a problem-adapted distance-generating function $h$ (see also the Bregman divergence \cite{censor1981iterative,bregman1967relaxation}). Bregman-divergence-based ADMM methods have been studied for first-order convergence, e.g., \cite[Theorem III.8]{wang2014convergence} and \cite[Theorem 3.9]{wang2018convergence}; we discuss specific Bregman ADMM variants next.

\paragraph{Bregman ADMM variants.}
Early Bregman variants of ADMM were proposed to better match the problem geometry and to simplify subproblems by replacing Euclidean proximal terms with Bregman divergences; see, e.g., the Bregman ADMM of \cite{wang2014bregman}. More recent work has developed first-order convergence theory in nonconvex and multi-block settings. For instance, \cite{liu2024improvedbregmanadmm} studies an improved Bregman ADMM with convergence-rate guarantees, \cite{pham2024bregmanproxlinearizedadmm} considers a Bregman proximal-linearized scheme for separable sums coupled by a difference of functions term, and \cite{cui2025mpsbadmm} analyzes a distributed multi-block variant for nonconvex and nonsmooth sharing problems; see also \cite[Theorem 3.9]{wang2018convergence} for a representative first-order result. What is missing from this line of work is a mechanism excluding convergence to strict saddles. Our contribution fills this gap: under relative (bi-)smoothness, strict saddles are unstable fixed points of the induced smooth map, and random initialization therefore rules out convergence to them almost surely. Combined with standard first-order convergence assumptions, this yields second-order stationarity of limiting KKT points.

At the same time, the extension from the Euclidean and Lipschitz setting of \cite{hong2018gradient} to Bregman ADMM is not a trivial substitution of proximal terms. The Euclidean analysis in \cite{hong2018gradient} also reduces its instability proof, after a suitable transformation, to a real symmetric spectral argument. In the Bregman setting, however, replacing the Euclidean proximal term by a Bregman divergence introduces metric blocks from the kernel Hessians, so the determinant reduction and sign change argument must be rederived in the Bregman geometry. In the two block case, this requires a diagonal rescaling that symmetrizes the reduced spectral problem and a correspondingly rescaled strict saddle direction. In the consensus case, the proof is again not a direct lift of the Euclidean argument: the star consensus structure creates a hub and peripheral coupling pattern, and the key sign change relies on a null space cancellation of the consensus penalty, since the equal $\vx$ test direction lies in the kernel of the star graph Laplacian. These are the main new technical ingredients behind the strict saddle result in the Bregman setting.

%\SL{At the same time, the extension from the Euclidean and Lipschitz setting of \cite{hong2018gradient} to Bregman ADMM is not a trivial substitution of proximal terms. The Euclidean analysis in \cite{hong2018gradient} also reduces its instability proof, after a suitable transformation, to a real symmetric spectral argument. In the Bregman setting, however, replacing the Euclidean proximal term $\frac{1}{2\eta}\|\cdot\|^2$ by a Bregman divergence $\frac{1}{\eta}D_h$ introduces metric blocks from the kernel Hessians, so the determinant reduction no longer follows the same algebraic form. The two block proof therefore requires a Schur reduction specific to the Bregman setting and a $\mu$ dependent diagonal similarity transform (with scaling $s(\mu)$, see \Cref{pf:thm:badmm}), together with a $\mu$ dependent test direction $\vy(\mu)=(d_x,s(\mu)d_y)$ whose components converge to the strict saddle direction as $\mu\to0^+$. In the consensus case, the proof is again not a direct lift of the Euclidean argument: the star consensus structure creates a hub and peripheral coupling pattern, and the key sign change relies on a null space cancellation of the consensus penalty, since the equal $\vx$ test direction lies in the kernel of the star graph Laplacian. These are the main new technical ingredients behind the strict saddle result in the Bregman setting.}

\paragraph{Contributions.}
Our main contributions are as follows.
\begin{itemize}[leftmargin=*,topsep=0pt,partopsep=0pt,itemsep=0pt,parsep=2pt]
\item \textbf{Strict-saddle avoidance for Bregman ADMM beyond Lipschitz smoothness.}
We analyze Bregman ADMM (\Cref{alg:badmm}) under relative (bi-)smoothness with respect to suitable Bregman kernels. By viewing one iteration as a smooth primal--dual fixed-point map and showing that strict saddles are unstable fixed points, we prove that with random initialization the probability that the iterates converge to a strict saddle is zero, without requiring a Lipschitz gradient assumption. The two block proof requires a determinant reduction, symmetrization, and scaling step specific to the Bregman setting in the spectral argument; see the discussion around \Cref{sec:analysis}.

\item \textbf{Second-order stationarity of limit points as a corollary of strict-saddle avoidance.}
Under additional assumptions guaranteeing first-order convergence to a KKT point (e.g., the boundedness and K\L{} and subanalyticity conditions in \cite[Theorem III.8]{wang2014convergence} and \cite[Theorem 3.9]{wang2018convergence}), our instability result implies that limiting KKT points are almost surely second-order stationary. In the Euclidean special case when the two Bregman kernels are chosen as $h_1(\vx)=\tfrac{1}{2}\|\vx\|_2^2$ and $h_2(\vy)=\tfrac{1}{2}\|\vy\|_2^2$, \Cref{alg:badmm} reduces to proximal ADMM. A recent work of \cite{gao2026note} establishes strict saddle avoidance and almost sure second order convergence for Euclidean proximal ADMM. Relative to this Euclidean proximal ADMM result, our contribution is to extend the strict saddle avoidance mechanism to the Bregman setting under relative smoothness, which covers polynomial objectives without a global Lipschitz constant, and to develop the corresponding multi block consensus extension.

\item \textbf{Multi-block consensus extensions with the same second-order stationarity guarantee.}
We extend the framework to a global consensus formulation and prove the same strict-saddle avoidance guarantees for Consensus Bregman ADMM; see \Cref{sec:mult}. Here the key new ingredient is a star-consensus structural reduction, together with a hub/non-hub null-space argument that isolates the negative-curvature direction despite the consensus penalty.

\item \textbf{Practical implications for polynomial objectives and distributed factorization problems.}
Using known relative-(bi-)smoothness constructions for polynomial objectives, our theory applies to a broad class of non-Lipschitz models arising in matrix/tensor optimization. We report numerical experiments on distributed matrix factorization and symmetric tensor factorization in \Cref{sec:application}.
\end{itemize}
The paper is organized as follows. \Cref{sec:prelim} reviews stationarity notions and tools from Bregman geometry and relative smoothness. \Cref{sec:bregadmm} presents Bregman ADMM and the strict-saddle avoidance result. \Cref{sec:mult} extends the algorithm and guarantees to a global consensus setting. \Cref{sec:analysis} first presents the single-block core mechanism through the proof for the Bregman augmented Lagrangian method (Bregman ALM), and then gives proof sketches explaining how the argument extends to the two-block Bregman ADMM and consensus settings. \Cref{sec:application} reports numerical experiments. The appendix contains the full two-block Bregman ADMM and consensus proofs.

\paragraph{Notation.}
We write bold lowercase letters (e.g., $\vx,\vy,\vz$) for vectors, bold uppercase letters (e.g., $\mA,\mB,\mH$) for matrices, and $\calT$ for tensors.
$\R^n$ denotes $n$-dimensional Euclidean space.
$\|\cdot\|_2$ is the Euclidean (vector) norm and $\|\cdot\|_F$ the Frobenius (matrix/tensor) norm.
$\langle\cdot,\cdot\rangle$ denotes the standard inner product.
$\Null(\mM)$ denotes the null space of $\mM$, and $\eye$ denotes an identity matrix of compatible dimension.
$\nabla f$ and $\nabla^2 f$ denote the gradient and Hessian of $f$; for functions of two variables we write $\nabla^2_{\vx\vx}f, \nabla^2_{\vy\vy}f$ for the diagonal Hessian blocks and $\nabla^2_{\vx\vy}f,\nabla^2_{\vy\vx}f$ for the cross Hessian blocks.
For a matrix $\mM$, $\lambda_i(\mM)$ denotes an eigenvalue indexed by $i$; when $\mM$ is real symmetric, $\lambda_{\min}(\mM)$ denotes its smallest eigenvalue. $\mM\succ 0$ (resp.\ $\mM\succeq 0$) means $\mM$ is positive definite (resp.\ positive semidefinite).
For matrices $\mM_1,\ldots,\mM_k$, $\mathrm{diag}(\mM_1,\ldots,\mM_k)$ denotes the block-diagonal matrix with diagonal blocks $\mM_1,\ldots,\mM_k$.
$Dg$ denotes the Jacobian matrix (total derivative) of a differentiable map $g$, and $g^k$ its $k$-fold composition; $\nabla_\vx f$ denotes the partial gradient with respect to $\vx$.
We write $[J]\doteq\{1,\ldots,J\}$ for the index set, and use $\doteq$ throughout to mean ``is defined as''.

\section{Preliminaries}
\label{sec:prelim}

\begin{definition}\label{def:critical:point}
Consider the following equality-constrained program with $\mA\in\R^{p\times n}$, $\vb\in\R^p$, and twice continuously differentiable $f$:

\[
\minimize_{\vx\in\R^n} f(\vx) \st \mA\vx=\vb.
\]
\begin{enumerate}[leftmargin=*]
\item $\vx$ is a (first-order) stationary point if $\mA\vx=\vb$ and $\nabla f(\vx)+\mA^{\top}\vlambda=\vzero$ for some multiplier $\vlambda\in\R^{p}$; 

\item $\vx$ is a second-order stationary point if it is a first-order stationary point and $\vd^{\top}\nabla^{2} f(\vx)\vd\ge 0$ for all feasible directions $\vd\in \Null(\mA)$;

\item $\vx$ is a strict saddle if it is a first-order stationary point but not a second-order stationary point, i.e., $\vd^{\top}\nabla^{2} f(\vx)\vd < 0$ for some $\vd\in \Null(\mA)$.
\end{enumerate}
\end{definition}

\begin{definition}[Multi-block stationarity]\label{rem:stacked-definition}
The notions in \Cref{def:critical:point} extend to any linearly constrained problem $\min_{\vz}F(\vz)$ subject to $\mC\vz=\vb$ with twice continuously differentiable $F$ by viewing all decision variables as a single block $\vz$ and replacing $\mA$ with $\mC$.
In particular, for Problem~\eqref{eqn:problem:2}, set $\vz=(\vx,\vy)$, $F(\vz)=f_1(\vx)+f_2(\vy)$, and $\mC=[\mA\ \mB]$.
\begin{enumerate}[leftmargin=*]
\item $(\vx^\star,\vy^\star)$ is a first order stationary point if and only if $\mA\vx^\star+\mB\vy^\star=\vb$, $\nabla f_1(\vx^\star)+\mA^\top\vlambda^\star=\vzero$, and $\nabla f_2(\vy^\star)+\mB^\top\vlambda^\star=\vzero$ for some $\vlambda^\star$.

\item $(\vx^\star,\vy^\star)$ is a second order stationary point if and only if it is a first order stationary point and $\vd_x^\top\nabla^2 f_1(\vx^\star)\vd_x+\vd_y^\top\nabla^2 f_2(\vy^\star)\vd_y\ge0$ for all $(\vd_x,\vd_y)$ satisfying $\mA\vd_x+\mB\vd_y=\vzero$.

\item $(\vx^\star,\vy^\star)$ is a strict saddle if and only if it is a first order stationary point and $\vd_x^\top\nabla^2 f_1(\vx^\star)\vd_x+\vd_y^\top\nabla^2 f_2(\vy^\star)\vd_y<0$ for some $(\vd_x,\vd_y)$ satisfying $\mA\vd_x+\mB\vd_y=\vzero$; cf.\ \eqref{KKT:1:badmm} and \eqref{KKT:2:badmm} in \Cref{pf:thm:badmm}.
\end{enumerate}
The same convention applies to the consensus formulation after stacking all local variables and writing the agreement constraints as a linear system.
We refer to any primal dual point satisfying the feasibility and first order conditions in the relevant stacked formulation as a KKT point.
\end{definition}

\paragraph{Bregman divergence and relative smoothness.}
We recall the Bregman divergence induced by a strongly convex kernel $h$ \cite{bregman1967relaxation,censor1981iterative}.

\begin{definition}
\label{def:breg}
The Bregman divergence induced by a differentiable, strongly convex function $h$ is defined as
\begin{align}
D_h(\vx,\vy)=h(\vx)- h(\vy) - \langle\nabla h(\vy),\vx-\vy\rangle
\label{def:bregman:distance}
\end{align}
\end{definition}

Geometrically, $D_h(\vx,\vy)$ is the gap between $h(\vx)$ and the first-order Taylor approximation of $h$ at $\vy$, so it measures discrepancy in the geometry induced by the kernel rather than by the Euclidean norm.
When $h(\vx) = \frac{1}{2}\|\vx\|_{2}^2$, the Bregman divergence reduces to the standard squared Euclidean distance $D_h(\vx,\vy) = \frac{1}{2}\|\vx - \vy\|_{2}^2$. More generally, Bregman divergences underpin mirror-descent and Bregman-proximal schemes: \cite{bauschke2016descent} introduces the descent lemma under relative smoothness, \cite{bolte2018first} extends the framework to nonconvex objectives, and \cite{li2019provable} constructs polynomial kernels ensuring relative smoothness for broad classes of problems. These ideas also lead to Bregman variants of ADMM \cite{wang2014bregman,wang2014convergence,wang2018convergence}. One key analytic advantage is that the smoothness of $f$ can be measured \emph{relative} to $h$.

\paragraph{Examples used later.}
We record several kernels and the corresponding Bregman divergences used below.
\begin{itemize}[leftmargin=*]
\item \textbf{Quadratic kernel (Euclidean).} If $h(\vx)=\frac{1}{2}\|\vx\|_2^2$, then $D_h(\vx,\vy)=\frac{1}{2}\|\vx-\vy\|_2^2$.

\item \textbf{Polynomial kernels used for relative smoothness.}
For the kernel in \Cref{lem:h:exist1}, where $d\ge 2$ is the polynomial degree,
\[
h(\vx)=\frac{1}{d}\|\vx\|_2^{d}+\frac{1}{2}\|\vx\|_2^2+1,\qquad
\nabla h(\vx)=\|\vx\|_2^{d-2}\vx+\vx,
\]
so $D_h(\vx,\vy)$ can be computed by substituting these expressions into \eqref{def:bregman:distance}.

\item \textbf{Bi-variable/product kernels and bi-Bregman divergence.}
For the product-type bi-convex kernel used in \Cref{lem:h:exist2}, $h(\vx,\vy)=h_x(\vx)h_y(\vy)$, the bi-Bregman divergences simplify to scalar multiples of the corresponding one-variable Bregman divergences:
\begin{align*}
D_h^x(\vx_1,\vx_2;\vy)&=h_y(\vy)\,D_{h_x}(\vx_1,\vx_2),\\
D_h^y(\vy_1,\vy_2;\vx)&=h_x(\vx)\,D_{h_y}(\vy_1,\vy_2).
\end{align*}
This structure is used in the multi-block extensions of \Cref{sec:mult}.
\end{itemize}

\begin{definition}
\label{defi:adaptive:lipschitz}
A $\mathcal{C}^2$ function $f(\vx)$ is relatively smooth with constant $L_f$ with respect to a $\mathcal{C}^2$ strongly convex function $h(\vx)$ if
\begin{align}
\label{eqn:general:lipschitz}
L_f \nabla^{2}h(\vx)\pm \nabla^{2}f(\vx)\succeq 0,\quad \forall \vx\in\R^n.
\end{align}
\end{definition}
Our convention in \eqref{eqn:general:lipschitz} is a \emph{two-sided} Hessian comparison: besides the standard upper bound $\nabla^2 f(\vx)\preceq L_f\nabla^2 h(\vx)$, we also assume the lower bound $\nabla^2 f(\vx)\succeq -L_f\nabla^2 h(\vx)$. The lower bound is what guarantees strong convexity of the Bregman-proximal subproblems used later in the paper.
When $h(\vx) = \frac{1}{2}\|\vx\|_{2}^2$, \eqref{eqn:general:lipschitz} reduces to the standard Lipschitz-smoothness condition $\|\nabla^{2}f(\vx)\|\le L_f$. Under suitable standard assumptions in addition to relative smoothness, Bregman gradient and proximal gradient methods admit descent and convergence guarantees, including convergence to first order stationary points in the nonconvex setting \cite{bauschke2016descent,bolte2018first}, thereby avoiding the need for a global Lipschitz constant in the gradient.
This framework is particularly useful for polynomial models, which are central to the applications below. In particular, any finite degree polynomial objective is relatively smooth under a suitable polynomial kernel $h$ \cite{li2019provable}; the corresponding two variable construction yields relative bi-smoothness, as stated next.
\begin{proposition}[\cite{li2019provable}]\label{lem:h:exist1}
We call $f(\vx)$ a $d$th-degree polynomial if the maximum total degree among its monomials of the form $x_1^{a_1}\cdots x_n^{a_n}$ is $d$.
Let $f(\vx)$ be a $d$th-degree polynomial with $d\ge 2$ and $h(\vx)=
\frac{1}{d}\|\vx\|_{2}^{d}+\frac{1}{2}\|\vx\|_{2}^{2}+1$. Then $f$ is $L_f$-relatively smooth to $h$ for some $L_f>0$ depending on the coefficients of $f$.
\end{proposition}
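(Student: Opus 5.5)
We need to show that for a degree-$d$ polynomial $f$ with $d\ge 2$ there is a constant $L_f$ such that $L_f\nabla^2 h(\vx)\pm\nabla^2 f(\vx)\succeq 0$ for all $\vx$, where $h(\vx)=\frac1d\|\vx\|_2^d+\frac12\|\vx\|_2^2+1$. The approach is to bound $\|\nabla^2 f(\vx)\|$ from above by a polynomial in $\|\vx\|_2$ of degree $d-2$, bound $\lambda_{\min}(\nabla^2 h(\vx))$ from below by a matching expression, and compare the two.

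\textbf{Hessian of the kernel.} For $\vx\neq\vzero$ we have $\nabla h(\vx)=\|\vx\|_2^{d-2}\vx+\vx$. Differentiating gives
\[
\nabla^2 h(\vx)=\|\vx\|_2^{d-2}\eye+(d-2)\|\vx\|_2^{d-4}\vx\vx^\top+\eye .
\]
This formula extends continuously to $\vx=\vzero$ when $d\ge 3$; for $d=2$ the kernel is simply quadratic. Since $(d-2)\|\vx\|_2^{d-4}\vx\vx^\top\succeq 0$, we get
\[
\nabla^2 h(\vx)\succeq(1+\|\vx\|_2^{d-2})\eye .
\]
The point where $h$ may fail to be $\mathcal C^2$ (namely $\vx=\vzero$ with $d=3$) does not matter for this bound: the term $\vx\vx^\top\|\vx\|_2^{-1}$ stays bounded and positive semidefinite there.

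\textbf{Hessian of $f$.} Each entry of $\nabla^2 f$ is a polynomial of degree at most $d-2$. Each monomial $x_1^{a_1}\cdots x_n^{a_n}$ with $\sum a_i=k$ satisfies $|x^a|\le\|\vx\|_2^k$. Summing over monomials, every entry is bounded in absolute value by $c\sum_{k=0}^{d-2}\|\vx\|_2^k$, where $c$ depends on the coefficients of $f$ (including the factors produced by differentiation). Two further elementary facts finish this step:
\begin{itemize}
\item For $0\le k\le d-2$ we have $t^k\le 1+t^{d-2}$ for all $t\ge0$: use $t^k\le 1$ when $t\le1$ and $t^k\le t^{d-2}$ when $t\ge 1$.
\item The spectral norm is at most $n$ times the maximal entry, i.e. it is bounded by the Frobenius norm.
\end{itemize}
Together these give
\[
\|\nabla^2 f(\vx)\|\le n\,c\,(d-1)\,(1+\|\vx\|_2^{d-2}).
\]

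\textbf{Conclusion.} Set $L_f=n c(d-1)$, or any larger positive number. Then
\[
\pm\nabla^2 f(\vx)\succeq-\|\nabla^2 f(\vx)\|\,\eye\succeq -L_f(1+\|\vx\|_2^{d-2})\eye\succeq -L_f\nabla^2 h(\vx),
\]
which is exactly \eqref{eqn:general:lipschitz}.

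The only mildly delicate step is the lower bound on $\nabla^2 h$: we must check that the rank-one term is positive semidefinite and handle the origin for small $d$. Everything else is routine bookkeeping of the polynomial coefficients. Strong convexity of $h$ follows from $\nabla^2 h\succeq\eye$, as \Cref{defi:adaptive:lipschitz} requires.
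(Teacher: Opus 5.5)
Your proof is correct and is essentially the standard argument behind the result the paper cites from \cite{li2019provable}; the paper gives no proof of its own, only the citation and \Cref{rem:twosided}. As there, you bound $\|\nabla^2 f(\vx)\|$ by a constant multiple of $1+\|\vx\|_2^{d-2}$ and compare it with $\nabla^2 h(\vx)\succeq(1+\|\vx\|_2^{d-2})\eye$, which gives both signs of the two-sided condition at once.

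One small correction: the origin is not an exceptional point. For $d\ge 3$ the rank-one term has norm $(d-2)\|\vx\|_2^{d-2}\to 0$, so $h$ is $\mathcal{C}^2$ on all of $\R^n$ with $\nabla^2 h(\vzero)=\eye$. This is exactly what \Cref{defi:adaptive:lipschitz} requires.
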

\begin{remark}[Two-sided condition]\label{rem:twosided}
The relative smoothness convention in \cite{li2019provable} is already the two sided Hessian comparison $L_f\nabla^2h(\vx)\pm\nabla^2 f(\vx)\succeq0$. Thus \Cref{lem:h:exist1} is a specialization of the polynomial kernel construction in \cite{li2019provable}. If one starts instead from the standard one sided upper bound convention, the lower bound can also be obtained by applying the same polynomial kernel argument to $-f$ and taking $L_f\gets\max(L_f,L_{-f})$. The same reasoning applies to the bi variable case (\Cref{lem:h:exist2}).
\end{remark}

Since \eqref{eqn:problem:2} involves two primal blocks, we also use the following bi-smoothness conditions.
\begin{definition}
A twice continuously differentiable kernel $h(\vx,\vy)$ is called \emph{strongly bi-convex} on its domain if there exists $m_h>0$ such that
\begin{align*}
\nabla^2_{\vx\vx}h(\vx,\vy)&\succeq m_h\eye,\\
\nabla^2_{\vy\vy}h(\vx,\vy)&\succeq m_h\eye, ~ \forall  \vx,\vy.
\end{align*}
\end{definition}
\begin{definition} The bi-Bregman divergence induced by a strongly bi-convex function $h$ is defined as
\begin{align}
D_h^x(\vx_1,\vx_2;\vy)=&h(\vx_1,\vy)- h(\vx_2,\vy) - \langle\nabla_{\vx} h(\vx_2,\vy),\vx_1-\vx_2\rangle,
\\
D_h^y(\vy_1,\vy_2;\vx)=&h(\vx,\vy_1)- h(\vx,\vy_2) - \langle\nabla_{\vy} h(\vx,\vy_2),\vy_1-\vy_2\rangle.
\end{align}
When $h(\vx,\vy)=(\|\vx\|_2^2+\|\vy\|_2^2)/2$, we have $D_h^x(\vx_1,\vx_2;\vy)=\|\vx_1-\vx_2\|_2^2/2$ and  $D_h^y(\vy_1,\vy_2;\vx)=\|\vy_1-\vy_2\|_2^2/2$, respectively.
\label{def:bi:bregman:distance}
\end{definition}

\begin{definition}
\label{defi:adaptive:lipschitz:coordinate}
A twice continuously differentiable function $f(\vx,\vy)$ is relatively bi-smooth with constants $(L_f^x,L_f^y)$ with respect to a strongly bi-convex function $h(\vx,\vy)$ if
\begin{equation}
\begin{split}
\label{eqn:general:lipschitz:coordinate}
L_f^x \nabla^{2}_{\vx\vx}h(\vx,\vy) \pm \nabla_{\vx\vx}^{2}f(\vx,\vy)&\succeq 0,\\
L_f^y \nabla^{2}_{\vy\vy}h(\vx,\vy) \pm \nabla_{\vy\vy}^{2}f(\vx,\vy) &\succeq 0, ~ \forall  \vx,\vy.
\end{split}
\end{equation}
\end{definition}
As in the one-variable case, \eqref{eqn:general:lipschitz:coordinate} is a two-sided assumption: it controls each diagonal Hessian block of $f$ both above and below by the corresponding diagonal Hessian block of $h$. Throughout the paper, ``relative smoothness'' and ``relative bi-smoothness'' refer to these two-sided conditions unless stated otherwise.

\begin{proposition}[\cite{li2019provable}]\label{lem:h:exist2}
We call $f(\vx,\vy)$ a $(d_1,d_2)$th-degree polynomial if it is a $d_1$th-degree polynomial in $\vx$ and a $d_2$th-degree polynomial in $\vy$.
Let $f(\vx,\vy)$ be a $(d_1,d_2)$th-degree polynomial with $d_1,d_2\ge 2$ and $h(\vx,\vy)=
(\frac{1}{d_1}\|\vx\|_{2}^{d_1}+\frac{1}{2}\|\vx\|_{2}^{2}+1)(\frac{1}{d_2}\|\vy\|_{2}^{d_2}+\frac{1}{2}\|\vy\|_{2}^{2}
+1)$. Then $f$ is $(L_f^x,L_f^y)$-relatively bi-smooth with respect to $h$ for some $L_f^x,L_f^y>0$.
\end{proposition}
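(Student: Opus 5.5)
The plan is to build the kernel from one-variable pieces. Write $h(\vx,\vy)=h_x(\vx)h_y(\vy)$ with $h_x(\vx)=\frac{1}{d_1}\|\vx\|_2^{d_1}+\frac12\|\vx\|_2^2+1$ and $h_y$ defined analogously. Since
\[
\nabla^2_{\vx\vx}h=h_y(\vy)\nabla^2 h_x(\vx), \qquad \nabla^2 h_x(\vx)\succeq \eye, \qquad h_y\ge 1,
\]
strong bi-convexity holds with $m_h=1$. It therefore suffices to show
\[
L_f^x\,h_y(\vy)\nabla^2h_x(\vx)\pm\nabla^2_{\vx\vx}f(\vx,\vy)\succeq0,
\]
and the analogous inequality in $\vy$, which follows by symmetry.

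\textbf{Step 1: a scalar bound on the Hessian block.} Expand $f(\vx,\vy)=\sum_{\alpha,\beta}c_{\alpha\beta}\vx^\alpha\vy^\beta$ with $|\alpha|\le d_1$ and $|\beta|\le d_2$. Each entry of $\nabla^2_{\vx\vx}f$ is a sum of monomials $\vx^{\alpha'}\vy^{\beta}$ with $|\alpha'|\le d_1-2$ and $|\beta|\le d_2$. Using $|\vx^{\alpha'}|\le\|\vx\|_2^{|\alpha'|}$ and $t^k\le 1+t^{d}$ for $0\le k\le d$, $t\ge0$, we get
\[
\|\nabla^2_{\vx\vx}f(\vx,\vy)\|_2\le C\,(1+\|\vx\|_2^{d_1-2})(1+\|\vy\|_2^{d_2}),
\]
where $C$ depends only on the coefficients and dimensions. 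For instance, one may take $C$ equal to $n$ times the sum of $|c_{\alpha\beta}|$ weighted by the second-derivative multiplicities.

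\textbf{Step 2: lower bounds on the kernel factors.} Compute
\[
\nabla^2h_x(\vx)=\|\vx\|_2^{d_1-2}\eye+(d_1-2)\|\vx\|_2^{d_1-4}\vx\vx^\top+\eye\succeq(1+\|\vx\|_2^{d_1-2})\eye .
\]
For $d_1=2$ the middle term vanishes, and the bound still holds with $2\eye$. For the other factor, $h_y(\vy)\ge 1+\frac{1}{d_2}\|\vy\|_2^{d_2}\ge\frac{1}{d_2}(1+\|\vy\|_2^{d_2})$. Combining these,
\[
h_y(\vy)\nabla^2h_x(\vx)\succeq\frac{1}{d_2}(1+\|\vx\|_2^{d_1-2})(1+\|\vy\|_2^{d_2})\eye .
\]
With $L_f^x=d_2C$, the bound from Step 1 then gives both signs of the inequality. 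Repeating the argument with the roles of the variables swapped gives $L_f^y=d_1C'$.

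\textbf{Main obstacle.} The delicate point is the degree bookkeeping in Step 1. The $\vy$-degree of the $\vx\vx$-Hessian block is not reduced by differentiation, so it must be dominated by the full factor $h_y(\vy)$, which has degree $d_2$. The $\vx$-degree drops by two and is matched by $\nabla^2h_x$. This is exactly why the product form of the kernel is needed, and I would verify the monomial bound $t^k\le1+t^d$ carefully in that step. The case $d_1=2$ (and likewise $d_2=2$) also needs a separate check in the Hessian formula, where $\|\vx\|^{d_1-4}$ appears. Alternatively, the whole statement can simply be cited from \cite{li2019provable}, as the attribution indicates.
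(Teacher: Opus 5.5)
Your proof is correct. The paper does not prove this proposition; it quotes it from \cite{li2019provable}. So your argument is a self-contained verification rather than an alternative to a proof in the text. It is the natural Hessian-domination argument, and it has two advantages over how the paper frames the result.

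\begin{enumerate}
\item The spectral-norm bound gives $\pm\nabla^2_{\vx\vx}f\preceq\|\nabla^2_{\vx\vx}f\|_2\,\eye$ in one step. The two-sided condition therefore comes for free, and the detour through $-f$ described in \Cref{rem:twosided} is unnecessary.
\item You obtain explicit constants $m_h=1$, $L_f^x=d_2C$, $L_f^y=d_1C'$.
\end{enumerate}

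Your observation that differentiating in $\vx$ does not lower the $\vy$-degree is exactly the reason the kernel must be a product rather than a sum.

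Two bookkeeping points need fixing.

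\begin{enumerate}
\item If $C=0$ (for example, $f$ affine in $\vx$), take $L_f^x=\max(d_2C,1)$, so that $L_f^x>0$ as the statement requires.
\item The negative power $\|\vx\|_2^{d_1-4}$ needs attention at $\vx=\vzero$ for $d_1=3$, not only for $d_1=2$. For $d_1=2$ its coefficient $d_1-2$ vanishes. For $d_1=3$ the term $\|\vx\|_2^{-1}\vx\vx^\top$ has norm $\|\vx\|_2$ and extends continuously by $\mzero$. With this extension, $\nabla^2h_x\succeq(1+\|\vx\|_2^{d_1-2})\eye$ also holds at the origin, reading $\|\vzero\|_2^0=1$ when $d_1=2$. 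The same observation shows that $h$ is $\mathcal{C}^2$, which \Cref{defi:adaptive:lipschitz:coordinate} requires.
\end{enumerate}
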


\section{Bregman ADMM}
\label{sec:bregadmm}
We consider Bregman ADMM for the two-block linearly constrained problem~\eqref{eqn:problem:2}. Under the one variable relative smoothness condition of \Cref{defi:adaptive:lipschitz}, applied separately to each block, together with the kernel strong convexity, stepsize, and invariant domain assumptions stated in \Cref{thm:badmm}, the probability that the iterates converge to a strict saddle is zero. Combined with first order convergence guarantees (e.g., \cite[Theorem III.8]{wang2014convergence}), this yields almost sure convergence to second order stationary points.

Bregman ADMM alternates between two Bregman-proximal primal updates and a dual ascent step. The divergences $D_{h_1}$ and $D_{h_2}$ act as geometry-adapted regularizers that stabilize each block subproblem and allow the analysis to rest on the relative smoothness condition of \Cref{defi:adaptive:lipschitz} rather than global Lipschitz continuity of $\nabla f$. When $h_1(\vx)=\tfrac{1}{2}\|\vx\|_2^2$ and $h_2(\vy)=\tfrac{1}{2}\|\vy\|_2^2$, Bregman ADMM reduces to proximal ADMM.

The algorithm is stated in \Cref{alg:badmm}.
\begin{algorithm}[H]
\caption{Bregman ADMM}
\label{alg:badmm}
\begin{algorithmic}[1]
\STATE {\bf Input:}  Bregman divergence kernels $h_1,h_2$  and parameters $\eta,\rho$.
\STATE {\bf Initialization:}  $(\vx^{0},\vy^{0},\vlambda^0)$
\STATE {\bf For} $k=1,2, \dots$, recursively generate $(\vx^{k},\vy^{k},\vlambda^k)$ by
\begin{equation}
\begin{aligned}
\vx^{k}=& \argmin_{\vx} \calL(\vx,\vy^{k-1},\vlambda^{k-1})+\frac{1}{\eta}D_{h_{1}}(\vx,\vx^{k-1}),
\\
\vy^{k}=& \argmin_{\vy} \calL(\vx^{k},\vy,\vlambda^{k-1})+\frac{1}{\eta}D_{h_{2}}(\vy,\vy^{k-1}),
\\
\vlambda^{k}=& \vlambda^{k-1}+\rho(\mA\vx^{k}+\mB\vy^{k}-\vb).
\end{aligned}
\label{eqn:badmm}
\end{equation}
\end{algorithmic}
\end{algorithm}

First-order convergence of Bregman ADMM has been analyzed for the two-block scheme in \cite[Theorem III.8]{wang2014convergence} and for the multi-block scheme in \cite[Theorem 3.9]{wang2018convergence}. These works use Bregman distance regularization, with constraints written as $\mA\vx=\mB\vy$, $\mA\vx+\mB\vy=\vzero$, or more generally $\sum_i\mA_i\vx_i=\vzero$. Up to block order, sign, and affine shift conventions, our updates coincide with these Bregman ADMM forms by using the scaled Bregman regularizers $\tfrac{1}{\eta}D_{h_1}$ and $\tfrac{1}{\eta}D_{h_2}$ and replacing the homogeneous constraint by $\mA\vx+\mB\vy=\vb$. Under \Cref{ass:domain} below, we complement these results by showing that the iteration defines a smooth fixed-point map $g$ whose strict saddles are unstable fixed points; consequently, the set of initializations that converge to strict saddles has Lebesgue measure zero.

Here and throughout, \emph{random initialization} means that the initial algorithmic state, for instance $(\vx^0,\vy^0,\vlambda^0)$ for \Cref{alg:badmm}, is drawn from a distribution on an open state space domain $\Omega$ that is absolutely continuous with respect to Lebesgue measure.

\begin{assumption}[Open-domain self-map condition]\label{ass:domain}
For the algorithm under consideration, the objective and Bregman kernels are $\mathcal{C}^2$ on an open primal domain, and the induced fixed-point map $g$ is well defined on an open state-space domain $\Omega$ satisfying $g(\Omega)\subseteq\Omega$.
\end{assumption}

\begin{remark}[Sufficient conditions for \Cref{ass:domain}]\label{rem:domainX}
\Cref{ass:domain} is the technical condition needed for the implicit function theorem and the dynamical systems argument used below to apply on the whole state space. Two common sufficient scenarios are:
\begin{itemize}[leftmargin=*]
\item \emph{Global full domain case.} If $f_j,h_j\in\mathcal{C}^2$ on the whole primal space and the Bregman proximal subproblems are globally well defined, for instance under the strong convexity, two sided relative smoothness, and stepsize assumptions of \Cref{thm:badmm}, then one may take $\Omega$ to be the full primal dual state space; the self map and domain invariance parts of \Cref{ass:domain} are automatic.
\item \emph{Invariant open domains.} In problems with natural open primal domains (e.g., when the kernels are Legendre-type so the primal variables stay in the interior of $\mathrm{dom}\,h_j$), the Bregman-proximal subproblems typically return minimizers in the same interior; taking $\Omega$ to be the corresponding product with the unrestricted dual space then yields $g(\Omega)\subseteq\Omega$.
\end{itemize}
If one also wants to conclude almost sure convergence to second order stationary points from first order convergence, then one sufficient route is to assume boundedness of the generated sequence, possibly obtained from coercivity in applications, together with K\L{} and subanalyticity type assumptions and the other regularity assumptions required by first order Bregman ADMM convergence results such as \cite[Theorem III.8]{wang2014convergence}.
\end{remark}

\begin{theorem}\label{thm:badmm}
Assume \Cref{ass:domain} holds for the fixed-point map induced by \Cref{alg:badmm}, and assume also that each kernel $h_j$ is strongly convex. Let $f_j$ be $L_j$-relatively smooth w.r.t.\ $h_j$ in the sense of \Cref{defi:adaptive:lipschitz} (applied to the $j$-th block), $j=1,2$, and set $\eta<\min_j1/L_j$. With random initialization, the probability that the iterates of \Cref{alg:badmm} converge to a strict saddle is zero.
\end{theorem}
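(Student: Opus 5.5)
The plan is to use the stable manifold framework of \cite[Theorem III.7]{shub2013global}, following the strategy in \cite{hong2018gradient}. View one iteration of \Cref{alg:badmm} as a map $g:\Omega\to\Omega$, $(\vx^{k-1},\vy^{k-1},\vlambda^{k-1})\mapsto(\vx^k,\vy^k,\vlambda^k)$. It then suffices to show three things:
\begin{enumerate}[label=(\roman*)]
\item $g$ is $\mathcal{C}^1$;
\item $Dg$ is invertible everywhere on $\Omega$, so $g$ is a local diffeomorphism;
\item at every fixed point corresponding to a strict saddle KKT point, $Dg$ has an eigenvalue of modulus strictly greater than one.
\end{enumerate}
Given these, the standard argument (countable cover of unstable fixed points by neighbourhoods with local center-stable manifolds of dimension less than the full dimension, pulled back by the countably many maps $g^{-k}$, which preserve null sets) shows that the set of initializations in $\Omega$ converging to a strict saddle has Lebesgue measure zero. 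Absolute continuity of the initial distribution then gives probability zero.

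For (i) and (ii), write the optimality conditions of the two subproblems:
\[
\nabla f_1(\vx^k)+\mA^\top\vlambda^{k-1}+\rho\mA^\top(\mA\vx^k+\mB\vy^{k-1}-\vb)+\tfrac1\eta(\nabla h_1(\vx^k)-\nabla h_1(\vx^{k-1}))=\vzero,
\]
and the analogous condition for $\vy^k$, followed by the dual update. The $\vx$-Hessian of the first subproblem objective is $\nabla^2 f_1+\rho\mA^\top\mA+\tfrac1\eta\nabla^2 h_1$. Since $\eta<1/L_1$, the lower half of the two-sided relative smoothness bound gives
\[
\nabla^2 f_1+\tfrac1\eta\nabla^2 h_1\succeq(\tfrac1\eta-L_1)\nabla^2 h_1\succ0,
\]
using strong convexity of $h_1$. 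The same holds for the second block. So each subproblem is strongly convex, and the implicit function theorem makes $g$ a $\mathcal{C}^1$ map.

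For invertibility, note that the Jacobian is block lower triangular after eliminating variables. The $\vx$-block is $(\nabla^2 f_1+\rho\mA^\top\mA+\tfrac1\eta\nabla^2h_1)^{-1}\tfrac1\eta\nabla^2h_1(\vx^{k-1})$, which is invertible. The $\vy$-block is analogous. The dual update couples to these triangularly, and a determinant computation shows $\det Dg\neq0$. Equivalently, one can show the inverse map is well defined locally: given $(\vx^k,\vy^k,\vlambda^k)$, recover $\vlambda^{k-1}$, then $\nabla h_2(\vy^{k-1})$, and so on, using the invertibility of $\nabla h_j$, which follows from strong convexity.

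The main obstacle is (iii). At a fixed point, $(\vx^\star,\vy^\star,\vlambda^\star)$ is KKT. I would compute $Dg$ there, with $\mH_1=\nabla^2 f_1(\vx^\star)$, $\mM_1=\nabla^2h_1(\vx^\star)$, and similarly $\mH_2,\mM_2$. I would then form the characteristic equation $\det(Dg-\mu\eye)=0$ and reduce it via Schur complements to $\det \mathbf{Q}(\mu)=0$ for a matrix pencil in the primal variables only. For real $\mu>1$ I would then apply a diagonal scaling $\mathrm{diag}(\mM_1^{1/2},s(\mu)\mM_2^{1/2})$ to symmetrize $\mathbf{Q}(\mu)$ into a real symmetric $\mathbf{S}(\mu)$. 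Then I would use a sign change argument:
\begin{itemize}[leftmargin=*]
\item As $\mu\to\infty$, $\mathbf{S}(\mu)$ is dominated by a positive definite term, so $\lambda_{\min}>0$.
\item As $\mu\to1^+$, test with the rescaled strict saddle direction $(\vd_x,s(\mu)\vd_y)$, where $\mA\vd_x+\mB\vd_y=\vzero$. The penalty $\rho\|\mA\vd_x+\mB\vd_y\|^2$ vanishes, the proximal terms scale like $(\mu-1)$, and the quadratic form tends to a positive multiple of $\vd_x^\top\mH_1\vd_x+\vd_y^\top\mH_2\vd_y<0$.
\end{itemize}
Continuity of $\lambda_{\min}(\mathbf{S}(\mu))$ in $\mu$ then yields some $\mu>1$ with $\det\mathbf{S}(\mu)=0$, which is an eigenvalue of $Dg$ with modulus greater than one.

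I expect the delicate part to be getting the Schur reduction and the choice of $s(\mu)$ right, so that the $\mu$-dependence of the cross term between $\vx$ and $\vy$ (created by the Gauss–Seidel ordering together with the dual coupling) symmetrizes. I would first try the Euclidean case to identify $s(\mu)$, then insert the $\mM_j$ factors.
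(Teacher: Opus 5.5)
Your proposal is correct and is essentially the paper's proof. The paper also writes one iteration as $g=g_3\circ g_2\circ g_1$ and obtains $\det Dg\neq0$ from nonsingular block-triangular factors. At a strict-saddle KKT fixed point it writes $Dg=\eye-\mPhi$ and seeks $\mu>0$ with $\det(\mPhi+\mu\eye)=0$, so your eigenvalue parameter is the paper's $1+\mu$. It then Schur-eliminates the dual block, symmetrizes the reduced primal matrix, and runs the intermediate-value argument with the test direction $(\vd_x,s(\mu)\vd_y)$, where $s(\mu)^2=(2+\mu+\tfrac1\mu)/(2+\tfrac1\mu)$, exactly as you outline.

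One simplification. Once $\mPhi+\mu\eye$ is multiplied through by the implicit-system matrix, the kernel Hessians enter only as symmetric additive terms $\tfrac{\mu}{\eta}\nabla^2h_j$ on the diagonal blocks. The asymmetry then sits entirely in the scalar off-diagonal coefficients $2+\tfrac1\mu$ versus $2+\mu+\tfrac1\mu$. So the scalar similarity $\mathrm{diag}(\eye_n,s(\mu)\eye_m)$ suffices and your $\mM_j^{1/2}$ factors are unnecessary. Used in a congruence, they would not equalize those coefficients, and your test direction $(\vd_x,s(\mu)\vd_y)$ already matches the scalar version.
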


We stress that \Cref{thm:badmm} does not assert convergence of the iterates; it states that the set of initializations leading to convergence to a strict saddle has Lebesgue measure zero.
Convergence itself requires separate conditions; once convergence to a KKT point, i.e., a primal-dual point satisfying the feasibility and first-order conditions in \Cref{rem:stacked-definition}, is established, the theorem excludes strict saddles from the set of possible limits.

The proof of \Cref{thm:badmm} is given in \Cref{pf:thm:badmm}. We emphasize that $\mathcal{C}^2$ regularity alone does \emph{not} guarantee either invariance of the state-space domain or boundedness of the iterates; \Cref{rem:domainX} gives representative sufficient conditions.
Under additional assumptions guaranteeing first order convergence, for example assumptions analogous to \cite[Theorem III.8]{wang2014convergence}, which include boundedness of the generated sequence and the regularity and K\L{} assumptions needed for convergence to a stationary point of the augmented Lagrangian, \Cref{thm:badmm} implies that the limiting KKT points are almost surely second order stationary.
By \Cref{lem:h:exist1}, the strict saddle avoidance conclusion of \Cref{thm:badmm} applies to finite degree polynomial objectives $f_1$ and $f_2$ with corresponding polynomial kernels. If, in addition, a first order convergence guarantee is available for the resulting Bregman ADMM iterates, then any limiting KKT point is almost surely second order stationary. For the polynomial kernel $h(\vx)=\tfrac{1}{d}\|\vx\|_2^d+\tfrac{1}{2}\|\vx\|_2^2+1$, the global full domain scenario in \Cref{rem:domainX} applies because the objectives and kernels are $\mathcal{C}^2$ and the subproblems are globally strongly convex under the stepsize condition; see \Cref{rem:global:sc}.

\begin{remark}[Global strong convexity of Bregman-proximal subproblems]\label{rem:global:sc}
For the polynomial kernel $h(\vx)=\tfrac{1}{d}\|\vx\|_2^d+\tfrac{1}{2}\|\vx\|_2^2+1$ of \Cref{lem:h:exist1}, we have $\nabla^2 h(\vx)\succeq \eye_n$ for all $\vx\in\R^n$ (since the $\|\vx\|_2^d$ term contributes a positive semidefinite Hessian and the quadratic term contributes $\eye_n$). By two-sided relative smoothness, $\nabla^2 f(\vx)\succeq -L_f\nabla^2 h(\vx)$ for all $\vx$, so the Bregman-proximal Hessian satisfies
\[
\nabla^2 f(\vx)+\frac{1}{\eta}\nabla^2 h(\vx)\succeq\Bigl(\frac{1}{\eta}-L_f\Bigr)\nabla^2 h(\vx)\succeq\Bigl(\frac{1}{\eta}-L_f\Bigr)\eye_n\succ 0
\]
\emph{uniformly} over $\R^n$ whenever $\eta<1/L_f$. Thus each subproblem is globally strongly convex with a unique minimizer, and the fixed-point map $g$ is well defined on the entire primal-dual space.
\end{remark}
For non-polynomial objectives satisfying relative smoothness, \Cref{ass:domain} must be verified on a case-by-case basis; the Legendre-type scenario in \Cref{rem:domainX} provides a useful template.
Since \Cref{alg:badmm} reduces to proximal ADMM when $h_1(\vx)=\tfrac{1}{2}\|\vx\|_2^2$ and $h_2(\vy)=\tfrac{1}{2}\|\vy\|_2^2$, \Cref{thm:badmm} covers proximal ADMM as a special case.

\begin{remark}[On single-variable vs.\ bi-variable relative smoothness]
Since the objective in \eqref{eqn:problem:2} is separable, $f(\vx,\vy)=f_1(\vx)+f_2(\vy)$, the cross-Hessian $\nabla^2_{\vx\vy}f$ vanishes and relative smoothness of each $f_j$ in its own variable (\Cref{defi:adaptive:lipschitz}) suffices for \Cref{thm:badmm}. For the consensus formulation in \Cref{sec:mult}, each local objective $f_j(\vx_j,\vy_j)$ may couple $\vx_j$ and $\vy_j$, so the stronger bi-smoothness condition (\Cref{defi:adaptive:lipschitz:coordinate}) is used in \Cref{thm:cadmm}.

\end{remark}

\section{Extension to Multi-block Case}
\label{sec:mult}

The two-block analysis already contains the essential mechanism. We now show how the same approach extends to a global consensus model, which covers the distributed settings most relevant for our applications.

\subsection{Global consensus problem}

Consider the following unconstrained centralized problem
\begin{equation}
\minimize_{\vx,\{\vy_j\}_{j=1}^J}  \sum_{j=1}^J f_j(\vx,\vy_j),
\label{eq:origcent}
\end{equation}
where $f_j$ are possibly nonconvex functions, which do not necessarily have a Lipschitz continuous gradient.
One can distribute this problem across a network of $J$ nodes in a ``star topology'', where $J-1$ agents are connected to and can communicate with a central node (which we denote by $1$), but do not communicate directly among themselves~\cite{hong2018gradient}. This results in the following {\em global consensus problem}
\begin{align}
\minimize_{\{\vx_{j},\vy_{j}\}_{j=1}^J} \sum_{j=1}^{J}f_{j}(\vx_{j},\vy_{j})  \st
\vx_{j}=\vx_{1},~~\forall j\ge 2.
\label{eqn:problemC}
\end{align}
Problem \eqref{eqn:problemC} is a generalized consensus formulation: only the shared variables $\vx_j$ are constrained to agree, while the local variables $\vy_j$ remain agent-specific.
The constraints $\vx_j=\vx_1$ encode a star-topology (global consensus) model in which agent $1$ plays the role of a hub and all other agents enforce agreement with $\vx_1$.
More general consensus topologies (e.g., edge-wise constraints $\vx_i=\vx_j$ over a connected graph) can be written as linear consensus constraints and reformulated into a global consensus form by introducing an auxiliary global variable, but extending the proof to a specific reformulation requires checking the induced fixed-point-map structure in addition to strong convexity of the resulting Bregman-proximal subproblems. For clarity, we focus on the star/global consensus case. 

\subsection{Landscape of global consensus problem}
\Cref{thm:cons} below connects the geometric landscapes of the unconstrained centralized problem \eqref{eq:origcent} and its equality-constrained distributed counterpart~\eqref{eqn:problemC}.

\begin{theorem}[Corollary 1 in \cite{li2019geometry}]
\label{thm:cons}
$[\vx, ~ \vy_1,~\cdots,~\vy_J]$ is a first-order/second-order stationary point of problem~\eqref{eq:origcent} iff $[\vx, ~\vx,~\cdots,~\vx,~ \vy_1,~\cdots,~\vy_J]$ is a first-order/second-order stationary point of problem~\eqref{eqn:problemC}. Moreover, if problem~\eqref{eq:origcent} satisfies the strict saddle property (every first order stationary point is either a local minimizer or a strict saddle) and has no spurious local minima, then for every second-order stationary point $[\vx, ~\vx,~\cdots,~\vx,~ \vy_1,~\cdots,~\vy_J]$  of~\eqref{eqn:problemC}, $[\vx, ~ \vy_1,~\cdots,~\vy_J]$ is a global minimizer of~\eqref{eq:origcent}.
\end{theorem}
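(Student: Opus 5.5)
The plan is to work directly with \Cref{rem:stacked-definition}, writing the constraints $\vx_j=\vx_1$ ($j\ge2$) as $\mC\vz=\vzero$, where $\vz=(\vx_1,\dots,\vx_J,\vy_1,\dots,\vy_J)$. First I would characterize $\Null(\mC)$: it is exactly $\{(\vd_x,\dots,\vd_x,\vd_{y_1},\dots,\vd_{y_J})\}$, the directions with equal $\vx$-components. Set $F(\vz)=\sum_j f_j(\vx_j,\vy_j)$ and $\phi(\vx,\vy_1,\dots,\vy_J)=\sum_j f_j(\vx,\vy_j)$. Consider the linear embedding $E:(\vd_x,\vd_{y_1},\dots,\vd_{y_J})\mapsto(\vd_x,\dots,\vd_x,\vd_{y_1},\dots,\vd_{y_J})$. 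It is a bijection onto $\Null(\mC)$, and $\phi=F\circ E$. By the chain rule,
\[
\nabla\phi=E^\top\nabla F(E\,\cdot),\qquad \nabla^2\phi=E^\top\nabla^2F(E\,\cdot)E .
\]

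\textbf{First-order equivalence.} Feasible points of \eqref{eqn:problemC} are exactly $E(\vx,\vy_1,\dots,\vy_J)$. Stationarity asks for $\nabla F+\mC^\top\vlambda=\vzero$ for some $\vlambda$, i.e.\ $\nabla F\in\mathrm{range}(\mC^\top)=\Null(\mC)^\perp$. Since $\mathrm{range}(E)=\Null(\mC)$, this is equivalent to $E^\top\nabla F=\vzero$, which is $\nabla\phi=\vzero$. Concretely, the conditions $\nabla_{\vy_j}f_j=\vzero$ carry over directly. The $\vx$-conditions read
\[
\nabla_{\vx}f_1-\sum_{j\ge2}\vlambda_j=\vzero,\qquad \nabla_{\vx}f_j+\vlambda_j=\vzero\ (j\ge2).
\]
These are solvable in $\vlambda$ exactly when $\sum_j\nabla_{\vx}f_j=\vzero$: take $\vlambda_j=-\nabla_{\vx}f_j$. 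I will write this explicit multiplier construction out, since it makes the equivalence transparent.

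\textbf{Second-order equivalence.} For $\vd\in\Null(\mC)$, write $\vd=E\vu$. Then
\[
\vd^\top\nabla^2F\,\vd=\vu^\top\nabla^2\phi\,\vu .
\]
As $\vu$ ranges over the whole space, $\vd$ ranges over all of $\Null(\mC)$. Hence nonnegativity of the constrained curvature on $\Null(\mC)$ is equivalent to $\nabla^2\phi\succeq0$. Combined with the first-order part, this gives the iff for second-order stationarity. The same identity shows that strict saddles correspond to strict saddles.

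\textbf{Landscape statement.} Take a second-order stationary point of \eqref{eqn:problemC}. It is feasible, so it has the form $E(\vx,\vy)$, and by the above it is second-order stationary for \eqref{eq:origcent}. The strict saddle property says every first-order stationary point of \eqref{eq:origcent} is either a local minimizer or a strict saddle. A strict saddle of the unconstrained problem has $\lambda_{\min}(\nabla^2\phi)<0$, which is impossible at a second-order stationary point, so the point is a local minimizer. Since there are no spurious local minima, it is a global minimizer.

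I do not expect a genuine obstacle. The only care point is the first-order step, where one must check that the existence of multipliers is exactly the range condition; the explicit construction $\vlambda_j=-\nabla_{\vx}f_j$ settles it.
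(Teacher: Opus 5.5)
Your proof is correct and complete. The linear embedding $E$ with $\mathrm{range}(E)=\Null(\mC)$ gives $\nabla\phi=E^\top\nabla F$ and $\nabla^2\phi=E^\top\nabla^2F\,E$, which transfers first- and second-order stationarity in both directions, and the landscape conclusion then follows exactly as you argue.

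The paper itself does not prove this statement; it imports it as Corollary~1 of \cite{li2019geometry}. Your null-space reparameterization is therefore a self-contained justification, and it is consistent with how the paper later uses the result. In particular, your multipliers $\vlambda_j=-\nabla_{\vx}f_j$ are exactly those in \eqref{eqn:first}. Likewise, your description of $\Null(\mC)$ is the equal-$\vx$ structure the paper invokes for the test direction in the proof of \Cref{thm:cadmm}.
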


\Cref{thm:cons} links the landscapes of \eqref{eq:origcent} and \eqref{eqn:problemC}. In particular, obtaining a second-order stationary point of the distributed consensus problem \eqref{eqn:problemC} yields a second-order stationary point of the centralized objective \eqref{eq:origcent}; in many structured matrix/tensor models, such points are globally optimal under a strict-saddle or no-spurious-minima landscape \cite{ge2016matrix,ge2015escaping}. Most available algorithms for global consensus problems, however, provide guarantees only for first-order stationarity. Under a Lipschitz gradient assumption, \cite{hong2018gradient} proves that linearized ADMM avoids strict saddles almost surely from random initialization; this assumption can fail for polynomial objectives arising in matrix/tensor factorizations, including distributed matrix factorization \cite{zhu2019distributed}.

\subsection{Convergence of Consensus Bregman ADMM}
To address these issues, we extend Bregman ADMM (see \Cref{alg:badmm}) to the multi-block case. We write the augmented Lagrangian for \eqref{eqn:problemC} as
\begin{equation}\begin{split}
&\calL(\vx_{1}, \vy_{1},\cdots,\vx_{J},\vy_{J}, \vlambda_{2},\cdots,\vlambda_{J})=
\sum_{j=1}^{J}f_{j}(\vx_{j},\vy_{j})
+ \sum_{j=2}^{J} \lg \vlambda_j, \vx_{j}-\vx_{1}\rg
+\frac{\rho}{2}\sum_{j=2}^{J} \|\vx_{j}-\vx_{1}\|_{2}^{2}.
\end{split}
\label{eqn:Lagrange}
\end{equation}
Applying Bregman-proximal updates to the augmented Lagrangian \eqref{eqn:Lagrange} yields \Cref{alg:cadmm} below. The agents are updated sequentially (Gauss-Seidel ordering): for $j\ge 2$, the $\vx_j$-update uses the already-computed $\vx_1^k$. More precisely, the ellipsis in $\calL(\vx_1^k,\vy_1^k,\ldots,\vx_j,\vy_j^{k-1},\ldots)$ means that all blocks with indices $\ell<j$ take their already-updated iteration-$k$ values, while blocks with $\ell>j$ retain their iteration-$(k{-}1)$ values; in particular, for $j=1$ no preceding blocks exist, so the Lagrangian is evaluated at $(\vx_1,\vy_1^{k-1},\vx_2^{k-1},\ldots)$. This ordering is also reflected in the proof in \Cref{sec:thm:cadmm}, whose Jacobian factorization follows the same update sequence.

\begin{algorithm}[ht]
\caption{Consensus Bregman ADMM}
\label{alg:cadmm}
\begin{algorithmic}[1]
\STATE {\bf Input:}  Bregman divergence kernels $h_j,j\in[J]$  and parameters $\eta,\rho$.

\STATE {\bf Initialization:}  $(\vx_{1}^{0},\vy_1^0,\cdots,\vx_{J}^{0},\vy_{J}^{0},\vlambda_{2}^{0},\cdots,\vlambda_{J}^{0})$

\STATE {\bf For} $k=1,2,\ldots$
\STATE {\bf For} $j=1,2,\ldots,J$
{
\begin{equation}
\begin{aligned}
     \vx_{j}^{k}&=\argmin_{\vx_{j}} \frac{1}{\eta}D^{x}_{h_{j}}(\vx_{j},\vx_{j}^{k-1};\vy_j^{k-1})+\calL(\vx_{1}^{k},\vy_{1}^{k},\cdots,\vx_{j},\vy_{j}^{k-1},\cdots,\vx_{J}^{k-1},\vy_{J}^{k-1},\vlambda_{2}^{k-1},\cdots,\vlambda_{J}^{k-1}),
\\
       \vy_{j}^{k}&=\argmin_{\vy_{j}} \frac{1}{\eta}D^{y}_{h_{j}}(\vy_{j},\vy_{j}^{k-1};\vx_j^{k})+\calL(\vx_{1}^{k},\vy_{1}^{k},\cdots,\vx_{j}^{k},\vy_{j},\cdots,\vx_{J}^{k-1},\vy_{J}^{k-1},\vlambda_{2}^{k-1},\cdots,\vlambda_{J}^{k-1}).
\end{aligned}
\label{eqn:algorithm}
\end{equation}
}
\STATE{\bf End For}
\STATE  $\begin{bmatrix}\vlambda_{2}^{k} & \cdots & \vlambda_{J}^{k}   \end{bmatrix}=
\begin{bmatrix} \vlambda_{2}^{k-1}& \cdots & \vlambda_{J}^{k-1}   \end{bmatrix} +\rho
\begin{bmatrix}\vx_{2}^k-\vx_{1}^k & \cdots & \vx_{J}^k-\vx_{1}^k  \end{bmatrix} $.
\STATE{\bf End For}
\end{algorithmic}
\end{algorithm}

\begin{theorem}
\label{thm:cadmm}
Assume \Cref{ass:domain} holds for the fixed-point map induced by \Cref{alg:cadmm}, and assume also that each kernel $h_j$ is strongly bi-convex. Let $f_j$ be $(L_j^x,L_j^y)$-relatively bi-smooth w.r.t.\ $h_j$, $j\in[J]$, and set $\eta< \min_{j\in[J]}\min(1/L_j^x,1/L_j^y)$. With random initialization, the probability that the iterates of \Cref{alg:cadmm} converge to a strict saddle is zero.
\end{theorem}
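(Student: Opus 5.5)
The plan follows the stable manifold strategy already used for \Cref{thm:badmm}. We write one sweep of \Cref{alg:cadmm} as a map $g:\Omega\to\Omega$ on the state $\vw=(\vx_1,\vy_1,\ldots,\vx_J,\vy_J,\vlambda_2,\ldots,\vlambda_J)$. We then verify three properties: (i) $g$ is $\mathcal{C}^1$; (ii) $g$ is a local diffeomorphism, i.e.\ $\det Dg\neq 0$ everywhere on $\Omega$; (iii) every strict saddle KKT point $\vw^\star$ is a fixed point of $g$ with an eigenvalue of modulus greater than one. Given these, the center-stable manifold theorem \cite[Theorem III.7]{shub2013global}, combined with the standard countable-cover argument, shows that the set of initializations converging to such points has Lebesgue measure zero. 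Since the initial distribution is absolutely continuous, this yields the theorem.

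For (i)--(ii) we write each subproblem through its optimality condition. For the $\vx_j$ update this is
\[
\nabla_{\vx}f_j(\vx_j,\vy_j^{k-1})+\tfrac1\eta\bigl(\nabla_{\vx}h_j(\vx_j,\vy_j^{k-1})-\nabla_{\vx}h_j(\vx_j^{k-1},\vy_j^{k-1})\bigr)+(\text{linear terms in }\vlambda,\rho)=\vzero,
\]
and the $\vy_j$ update is analogous. The Jacobian with respect to the new variable is
\[
\nabla^2_{\vx\vx}f_j+\tfrac1\eta\nabla^2_{\vx\vx}h_j+c\rho\eye,
\]
with $c=J-1$ for the hub and $c=1$ otherwise. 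By relative bi-smoothness and $\eta<1/L_j^x$, this matrix is $\succeq(1/\eta-L_j^x)m_h\eye\succ0$. The implicit function theorem therefore gives $\mathcal{C}^1$ block updates. Because the updates are Gauss--Seidel, $Dg$ factors as a product of sequential block-update Jacobians, and each factor's determinant reduces to the determinant of a matrix of the form $\tfrac1\eta\nabla^2h_j(\text{old})\cdot(\text{PD})^{-1}$. Here we use that strong bi-convexity makes the Hessians of $\nabla h_j$ in the old argument invertible. The dual update $\vlambda\mapsto\vlambda+\rho(\cdot)$ has unit diagonal, so it contributes no degeneracy. Hence $\det Dg\neq0$.

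Step (iii) is the main obstacle. At a KKT point all $\vx_j=\vx^\star$, so the Bregman terms vanish and $\vw^\star$ is a fixed point. We linearize, writing $\mH_j$ for the blocks of $\nabla^2 f_j$ and $\mM_j$ for the blocks of $\nabla^2 h_j$. To show that $Dg(\vw^\star)$ has an eigenvalue $\mu$ with $|\mu|>1$, we reduce $\det(\mu\eye-Dg)=0$ by Schur complements. First we eliminate the duals: from $(\mu-1)\delta\vlambda_j=\rho\mu(\delta\vx_j-\delta\vx_1)$ we obtain $\delta\vlambda_j=\tfrac{\rho\mu}{\mu-1}(\delta\vx_j-\delta\vx_1)$. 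This leaves a primal matrix pencil $\mP(\mu)$ with the hub/peripheral coupling pattern.

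Following the two-block proof, we apply a $\mu$-dependent diagonal scaling to symmetrize $\mP(\mu)$ for real $\mu>1$, obtaining a real symmetric $\mS(\mu)$. For $\mu\to\infty$ the dominant terms are $\mu(\tfrac1\eta\mM+\mH+\text{consensus Laplacian term})$, so $\mS(\mu)\succ0$. For $\mu\to1^+$ we test $\mS(\mu)$ with the strict saddle direction. By \Cref{thm:cons} and \Cref{rem:stacked-definition}, this direction can be taken of the form $\vd=(\vd_x,\vd_{y_1},\vd_x,\vd_{y_2},\ldots)$ with equal $\vx$-components and $\sum_j\vd_j^\top\mH_j\vd_j<0$. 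The key cancellation is that the equal-$\vx$ direction lies in the null space of the star-graph Laplacian. As a result, both the $\rho$ penalty and the $\tfrac{\rho\mu}{\mu-1}$ dual term, which blows up as $\mu\to1^+$, vanish on it. The $\tfrac1\eta(\mu-1)\mM$ Bregman terms also vanish, leaving $\vd^\top\mS(\mu)\vd\to\vd^\top\mH\vd<0$. I expect the delicate part to be confirming that the Gauss--Seidel strictly lower-triangular cross terms (hub to peripheral and $\vx_j$ to $\vy_j$) also cancel or become $O(\mu-1)$ after the scaling. If this fails, I would instead use the rescaled direction $(\vd_x,s(\mu)\vd_y)$ as in the two-block case.

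With $\det\mS(\mu)$ changing sign between $\mu\to1^+$ and $\mu\to\infty$ (or, more robustly, with $\lambda_{\min}(\mS(\mu))$ changing sign), continuity gives a real $\mu>1$ with $\det(\mu\eye-Dg(\vw^\star))=0$. This shows that $\vw^\star$ is unstable, and the measure-zero conclusion follows as above.
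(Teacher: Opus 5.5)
Your argument is correct. It shares the paper's skeleton: nonsingular elementary block Jacobians, KKT points as fixed points, a sign change of $\lambda_{\min}$ of a symmetrized pencil, and the equal-$\vx$ test direction in the kernel of the star Laplacian. The instability step, however, is done in the opposite order.

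\textbf{The paper's order.} It first rescales rows and columns of the full primal--dual matrix $\mN+\mu\mM$, and only afterwards Schur-complements the dual block. The dual coupling then appears as $\tfrac{1}{\mu}\mT\mT^\top$, which the equal-$\vx$ direction annihilates exactly.

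\textbf{Your order.} You eliminate the duals first, through $\delta\vlambda_j=\tfrac{\rho\mu}{\mu-1}(\delta\vx_j-\delta\vx_1)$. This produces the symmetric term $\tfrac{\rho\mu}{\mu-1}\mL_{\mathrm{star}}\otimes\eye$, and you then symmetrize a purely primal pencil.

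\textbf{Why your order is the one that works.} After the duals are eliminated, the coupling graph ($\vx_1$--$\vy_1$, $\vx_1$--$\vx_j$, $\vx_j$--$\vy_j$) is a tree. Each pair of transposed off-diagonal blocks differs by a positive factor when $\mu>1$. So a block-diagonal similarity $\mS(\mu)=\mD^{-1}\mP(\mu)\mD$ exists, for example $d_{\vx_1}=1$, $d_{\vx_j}=\mu/\sqrt{2\mu-1}$ for $j\ge2$, and $d_{\vy_j}=\sqrt{\mu}\,d_{\vx_j}$.

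In $\mN+\mu\mM$, by contrast, the blocks $\vx_1,\vx_j,\vlambda_j$ form a cycle. In the paper's parametrization (eigenvalue $1+\mu$), the coefficient ratios around this cycle multiply to $1+\mu\neq1$. Hence no diagonal row/column scaling can make that matrix symmetric. Concretely, take $J=2$ with scalar $\vx_j$ and no $\vy$-coupling: the paper's $\det\mJ(\mu)$ differs from $\det(\mN+\mu\mM)$ by $(1+\mu)\rho^2(\sqrt{1+\mu}-1)^2>0$.

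\textbf{The cost of your order, and the step you left open.} The price is the divergent factor $\tfrac{\rho\mu}{\mu-1}$. Your sentence that the penalty and dual terms ``vanish'' on the equal-$\vx$ direction holds only before scaling.
\begin{itemize}
\item After symmetrization, the $(\vx_1,\vx_j)$ block is the geometric mean $-\rho\mu\sqrt{2\mu-1}/(\mu-1)$ of the entries $-\rho\tfrac{2\mu-1}{\mu-1}$ and $-\rho\tfrac{\mu^2}{\mu-1}$.
\item On $\vd$, the consensus and dual contributions then total $(J-1)\tfrac{2\rho\mu(\mu-1)}{\mu+\sqrt{2\mu-1}}\|\vd_{\vx}\|_2^2$. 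This tends to $0$ only because $\mu-\sqrt{2\mu-1}=(\mu-1)^2/(\mu+\sqrt{2\mu-1})$ is second order in $\mu-1$. That is exactly the computation you left open.
\item The $(\vx_j,\vy_j)$ blocks become $\sqrt{\mu}\,\nabla^2_{\vx\vy}f_j$.
\end{itemize}
So your flagged step does close, and the limit as $\mu\to1^+$ is $\sum_j[\vd_{\vx};\vd_{\vy}^j]^\top\nabla^2 f_j[\vd_{\vx};\vd_{\vy}^j]<0$.

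A cleaner option is to test with $\mD(\mu)^{-1}\vd$. The form becomes $\vd^\top\mD^{-2}\mP(\mu)\vd$. In it the dual Laplacian term vanishes identically, and the only $\rho$-dependent residual is $(J-1)(\mu-1)\rho\|\vd_{\vx}\|_2^2$. Note that this rescales the $\vx_j$-components as well, not just the $\vy$-components as in your fallback.

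\textbf{Large-$\mu$ limit.} Your description should read $\mS(\mu)/\mu\to\mathrm{diag}(\mGamma^1_{\vx\vx},\mGamma^1_{\vy\vy},\ldots,\mGamma^J_{\vx\vx},\mGamma^J_{\vy\vy})$, because all symmetrized off-diagonal blocks are $O(\sqrt{\mu})$. Positivity comes from these diagonal blocks alone. A full-Hessian expression such as $\tfrac1\eta\nabla^2h_j+\nabla^2f_j$ need not be positive definite under relative bi-smoothness, which constrains only the diagonal Hessian blocks.
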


The proof of \Cref{thm:cadmm} is given in  \Cref{sec:thm:cadmm}. As with \Cref{thm:badmm}, the theorem does not assert convergence but rather excludes strict saddles from the set of possible limits; convergence to a KKT point must be established separately.
Combined with \Cref{thm:cons} and additional assumptions guaranteeing first order convergence, for example assumptions analogous to \cite[Theorem 3.9]{wang2018convergence}, which include boundedness or coercivity, regularity, and K\L{} and subanalyticity type conditions for convergence to a stationary point of the augmented Lagrangian, \Cref{thm:cadmm} implies that any limiting KKT point of \Cref{alg:cadmm} corresponds almost surely to a second-order stationary point of the centralized problem \eqref{eq:origcent}. Numerical illustrations are given in \Cref{sec:application}.

\section{\texorpdfstring{Strict Saddle Avoidance via Fixed Point Instability}{Strict Saddle Avoidance via Fixed Point Instability}}
\label{sec:analysis}
This section provides the high-level ideas underlying the proofs of \Cref{thm:badmm} and \Cref{thm:cadmm}; the detailed proofs are presented in \Cref{pf:thm:badmm} and \Cref{sec:thm:cadmm}, respectively. Specifically, \Cref{sec:sketch:twoblock} and \Cref{sec:sketch:consensus} develop the corresponding proof sketches.
To this end, we first present a fully expanded proof of the single block Bregman augmented Lagrangian method (Bregman ALM) prototype in \Cref{thm:bmm}, whose role is to make the common fixed point template transparent. 
%The main two block and consensus proof ideas are then summarized in \Cref{sec:sketch:twoblock,sec:sketch:consensus}.
The argument highlights four ingredients used again in the two block and consensus analyses: construction of a smooth fixed point map, nonsingularity of its Jacobian, identification of KKT points with fixed points, and instability of strict saddle fixed points. 
 
We begin with the following single block equality constrained problem:
\begin{align}
\minimize_{\vx\in\R^{n}} f(\vx)\ \st \mA\vx=\vb
\label{eqn:problem:1}
\end{align}
where $\mA\in\R^{p\times n}$ and $f$ is a nonconvex function.
The augmented Lagrangian form of \eqref{eqn:problem:1} is
\begin{align}
\calL(\vx,\vlambda)=f(\vx)+\lg \vlambda, \mA\vx-\vb\rg +\frac{\rho}{2}\|\mA\vx-\vb\|_{2}^{2},
\label{eqn:Lagrange:1}
\end{align}
where $\vlambda\in\R^{p}$ is the Lagrangian dual variable and $\rho>0$ is the augmented penalty parameter. In the single-block case, Bregman ADMM reduces to the Bregman augmented Lagrangian method (Bregman ALM; see \Cref{alg:bmm}).

\begin{algorithm}[H]
\caption{Bregman Augmented Lagrangian Method}
\label{alg:bmm}
\begin{algorithmic}[1]
\STATE {\bf Input:}  Bregman divergence kernel $h$  and parameters $\eta,\rho$.

\STATE {\bf Initialization:} $(\vx^{0},\vlambda^0)$

\STATE {\bf For} $k=1,2,\ldots$
\begin{equation}
\begin{aligned}
\vx^{k}=& \argmin_{\vx} \calL(\vx,\vlambda^{k-1})+\frac{1}{\eta}D_{h}(\vx,\vx^{k-1}),
\\
\vlambda^{k}=& \vlambda^{k-1}+\rho(\mA\vx^{k}-\vb).
\end{aligned}
\label{eqn:bmm}
\end{equation}
\end{algorithmic}
\end{algorithm}

Our analysis follows the dynamical systems viewpoint of Lee et al.~\cite{lee2017first}: an iterative method defines a smooth fixed point map on the algorithmic state. Once strict saddles are shown to be unstable fixed points of this map, local invertibility of the fixed point map allows the stable manifold argument of \cite{lee2017first} to imply that the set of initializations converging to such points has Lebesgue measure zero; see also the stable manifold theorem \cite{shub2013global}.

\begin{definition}\label{def:unstable:fixed:point}
Let $g$ be a $\mathcal{C}^{1}$ mapping from $\Omega$ to $\Omega$. Its set of unstable fixed points is
\begin{align}
\calA_{g}=\{\vz\in\Omega:\ g(\vz)=\vz,\ \max_i |\lambda_i(Dg(\vz))|>1\}.
\end{align}
\end{definition}

\begin{theorem}[\cite{lee2017first}]\label{thm:jason}
Let $g$ be a $\mathcal{C}^{1}$ mapping from $\Omega$ to $\Omega$ and suppose $\det(Dg(\vz))\neq 0$ for all $\vz\in\Omega$ (equivalently, $g$ is a local diffeomorphism on $\Omega$). Then the set of initial points that converge to unstable fixed points has zero Lebesgue measure:
\[
\mathrm{Leb}\Big(\big\{\vz^{0}\in\Omega:\ \lim_{k\to\infty} g^{k}(\vz^{0})\in\calA_{g}\big\}\Big)=0.
\]
\end{theorem}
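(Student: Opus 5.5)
The plan is the standard dynamical-systems argument: a local center-stable manifold theorem near each unstable fixed point, a countable covering, and the fact that preimages of null sets under a local diffeomorphism are null. Fix $\vz^\star\in\calA_g$.

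\textbf{Step 1 (local structure).} Since $g$ is $\mathcal{C}^1$ and $Dg(\vz^\star)$ is invertible, I would invoke the center-stable manifold theorem \cite[Theorem III.7]{shub2013global}. Let $E_{cs}$ be the invariant subspace of $Dg(\vz^\star)$ spanned by the (generalized) eigenvectors whose eigenvalues have modulus $\le 1$. The theorem gives an open ball $B_{\vz^\star}$ around $\vz^\star$ and an embedded $\mathcal{C}^1$ disk $W^{cs}_{\mathrm{loc}}(\vz^\star)$ tangent to $E_{cs}$, with the following property: every point $\vw$ with $g^t(\vw)\in B_{\vz^\star}$ for all $t\ge0$ lies in $W^{cs}_{\mathrm{loc}}(\vz^\star)$. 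Because $\vz^\star\in\calA_g$, some eigenvalue has modulus $>1$, so $\dim E_{cs}<\dim\Omega$. A $\mathcal{C}^1$ embedded disk of positive codimension has Lebesgue measure zero.

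\textbf{Step 2 (countable reduction).} The family $\{B_{\vz^\star}\}_{\vz^\star\in\calA_g}$ is an open cover of $\calA_g\subseteq\R^N$. Since $\R^N$ is second countable (Lindel\"of), it has a countable subcover $\{B_i\}_{i\ge1}$, with associated disks $W_i$. Now suppose $g^k(\vz^0)\to\vz^\star\in\calA_g$. Then $\vz^\star\in B_i$ for some $i$, and since $B_i$ is open, there is $T$ with $g^t(\vz^0)\in B_i$ for all $t\ge T$. Applying Step 1 to $\vw=g^T(\vz^0)$ gives $g^T(\vz^0)\in W_i$. Hence the bad set is contained in $\bigcup_{i\ge1}\bigcup_{T\ge0} g^{-T}(W_i)$, which is a countable union.

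\textbf{Step 3 (null preimages).} It remains to show that $g^{-1}(S)$ is null whenever $S$ is null; iterating then handles $g^{-T}$.
\begin{itemize}
\item Since $\det Dg\neq0$ on $\Omega$, the inverse function theorem lets us cover $\Omega$ by countably many open sets $U_\ell$ on which $g$ restricts to a $\mathcal{C}^1$ diffeomorphism onto the open set $g(U_\ell)$.
\item On each piece, $g^{-1}(S)\cap U_\ell=(g|_{U_\ell})^{-1}(S\cap g(U_\ell))$.
\item This is the image of a null set under a $\mathcal{C}^1$ map between equal-dimensional spaces. Such maps are locally Lipschitz, so the image is null.
\end{itemize}
Taking the countable union over $\ell$ shows $g^{-1}(S)$ is null. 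Combining with Step 2 finishes the argument.

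The main obstacle is Step 1: applying the center-stable manifold theorem at only $\mathcal{C}^1$ regularity. The theorem must include the center directions (eigenvalues of modulus exactly one), not just the strictly stable ones. One must also check that the local manifold captures every orbit that remains in the neighborhood; this is exactly the form stated in \cite{shub2013global}, and I would cite it rather than reprove it. The nonsingularity hypothesis is used twice: to make $g$ a local diffeomorphism so the theorem applies, and in Step 3 to control preimages.
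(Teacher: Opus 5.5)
Your proposal is correct and is essentially the proof in the cited source \cite{lee2017first}; the paper itself imports this statement without reproving it. As there, you use Shub's center-stable manifold theorem at each point of $\calA_g$, a Lindel\"of reduction to countably many local manifolds of positive codimension (valid because $\bigcap_{k\ge0}g^{-k}(B_i)\subseteq W_i$ holds for every orbit eventually trapped in $B_i$, whichever fixed point it converges to), and the fact that preimages of null sets under a $\mathcal{C}^1$ map with nonvanishing Jacobian determinant are null.
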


Under the random initialization convention stated in \Cref{sec:bregadmm}, \Cref{thm:jason} shows that the set of initial points that converge to an unstable fixed point has probability zero. In our applications, we verify that strict saddles correspond to such unstable fixed points, so strict-saddle avoidance follows.

\subsection{\texorpdfstring{Prototype: strict saddle avoidance for Bregman ALM}{Prototype: strict saddle avoidance for Bregman ALM}}

\begin{theorem}\label{thm:bmm}
Assume \Cref{ass:domain} holds for the fixed-point map induced by \Cref{alg:bmm}, and assume also that the kernel $h$ is strongly convex. Let $f$ be $L_f$-relatively smooth w.r.t.\ $h$, and set $\eta<1/L_f$. With random initialization, the probability that the iterates of \Cref{alg:bmm} converge to a strict saddle is zero.
\end{theorem}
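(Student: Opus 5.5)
Following the template announced at the start of this section, I would write one iteration of \Cref{alg:bmm} as a map $g(\vx,\vlambda)=(\vx^+,\vlambda^+)$ and check four things. First, $g$ is $\mathcal{C}^1$. Second, $Dg$ is nonsingular everywhere on $\Omega$. Third, the fixed points of $g$ are exactly the KKT pairs. Fourth, at a strict saddle $Dg$ has an eigenvalue of modulus larger than one. \Cref{thm:jason} then gives the measure-zero conclusion, and the absolute continuity of the initial distribution turns it into probability zero.

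\textbf{Smoothness.} The $\vx$-update is characterized by the optimality condition
\[
\Phi(\vx^+,\vx,\vlambda)\doteq\nabla f(\vx^+)+\mA^\top\vlambda+\rho\mA^\top(\mA\vx^+-\vb)+\tfrac1\eta\big(\nabla h(\vx^+)-\nabla h(\vx)\big)=\vzero.
\]
Its Jacobian in $\vx^+$ is $\mH(\vx^+)\doteq\nabla^2 f(\vx^+)+\tfrac1\eta\nabla^2h(\vx^+)+\rho\mA^\top\mA$. By two-sided relative smoothness this is at least $(\tfrac1\eta-L_f)\nabla^2 h+\rho\mA^\top\mA$, which is positive definite for $\eta<1/L_f$. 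So the subproblem is strongly convex with a unique minimizer, and the implicit function theorem gives a $\mathcal{C}^1$ map on $\Omega$, using \Cref{ass:domain}.

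\textbf{Jacobian.} Differentiating the optimality condition gives
\[
D_\vx\vx^+=\tfrac1\eta\mH^{-1}\nabla^2h(\vx),\qquad D_\vlambda\vx^+=-\mH^{-1}\mA^\top.
\]
Since $\vlambda^+=\vlambda+\rho(\mA\vx^+-\vb)$, the full Jacobian factors as a product of $\begin{bmatrix}\eye&\mzero\\ \rho\mA&\eye\end{bmatrix}$ and $\begin{bmatrix}\tfrac1\eta\mH^{-1}\nabla^2h&-\mH^{-1}\mA^\top\\ \mzero&\eye\end{bmatrix}$. Its determinant is therefore $\det(\tfrac1\eta\mH^{-1}\nabla^2h(\vx))\ne0$, because $\nabla^2 h\succ0$.

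\textbf{Fixed points.} If $\vx^+=\vx$ and $\vlambda^+=\vlambda$, the dual update forces $\mA\vx=\vb$. The Bregman terms then cancel in $\Phi$, leaving $\nabla f(\vx)+\mA^\top\vlambda=\vzero$, which is KKT. The converse holds by uniqueness of the subproblem minimizer.

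\textbf{Instability (main obstacle).} At a strict saddle, write $\mM=\nabla^2h(\vx^\star)\succ0$ and $\mQ=\nabla^2f(\vx^\star)$, and pick $\vd\in\Null(\mA)$ with $\vd^\top\mQ\vd<0$. The Jacobian is not symmetric, so the plan is to look at the characteristic equation directly. For $\mu$ not an eigenvalue of the $\vlambda$-block, eliminating $\vlambda$ by a Schur complement reduces $\det(Dg-\mu\eye)=0$ to
\[
\det\Big(\mu\,\mH-\tfrac1\eta\mM+\tfrac{\rho\mu}{\mu-1}\mA^\top\mA\Big)=0,
\]
or an equivalent symmetric matrix after the necessary factor of $\mH$ is absorbed. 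Restricting to real $\mu>1$, define
\[
\mS(\mu)=\mu\mQ+\tfrac{\mu-1}{\eta}\mM+\rho\mu\mA^\top\mA+\tfrac{\rho\mu}{\mu-1}\mA^\top\mA.
\]
This matrix is real symmetric and continuous on $(1,\infty)$. As $\mu\to\infty$ it is dominated by $\tfrac{\mu}{\eta}\mM\succ0$, plus positive semidefinite terms and $\mu\mQ$ with $\tfrac1\eta\mM+\mQ\succ0$, so it is positive definite. As $\mu\to1^+$, the test direction gives $\vd^\top\mS(\mu)\vd\to\vd^\top\mQ\vd<0$: the $\mA^\top\mA$ terms vanish because $\vd\in\Null(\mA)$, and the $\mM$ term carries a factor $(\mu-1)\to0$. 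By continuity of eigenvalues, $\lambda_{\min}(\mS(\mu))$ changes sign, so some $\mu>1$ makes $\det\mS(\mu)=0$. That $\mu$ is a real eigenvalue of $Dg$ exceeding one, so the strict saddle lies in $\calA_g$.

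The delicate point is carrying out the Schur reduction exactly, with the correct $\mu$-dependent coefficients in front of $\mA^\top\mA$, and confirming that $\mu\ne1$ keeps the elimination valid. I expect this bookkeeping to be the main work.
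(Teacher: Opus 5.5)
Your proposal is correct and follows essentially the paper's proof. It uses the same factorization and nonsingularity argument for $Dg$, the same identification of fixed points with KKT pairs, the same Schur elimination of the dual block, the same $\Null(\mA)$ test direction, and the same large-$\mu$ positivity from two-sided relative smoothness followed by the intermediate value theorem. The only difference is a shift of parameter: you seek an eigenvalue $\mu>1$ of $Dg$ directly, while the paper writes $Dg=\eye-\mPhi$ and seeks an eigenvalue $-\mu'$ of $\mPhi$ with $\mu'>0$. Your $\mS(\mu)$ is exactly the paper's $\mJ(\mu-1)$, with your $\mQ,\mM$ in the roles of the paper's $\mF,\mH$, so the Schur bookkeeping you flagged as the main risk is already correct.
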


Note that \Cref{thm:bmm} (and likewise Theorems~\ref{thm:badmm} and~\ref{thm:cadmm}) does not assert that the iterates converge; it states that the set of initializations leading to convergence to a strict saddle has measure zero. Convergence itself requires separate conditions (e.g., the K\L{}/boundedness assumptions cited earlier); once convergence to a KKT point is established, the theorem excludes strict saddles from the set of possible limits.

We apply \Cref{thm:jason} to the fixed-point map induced by \Cref{alg:bmm}. The proof proceeds in four steps:
\begin{enumerate}[leftmargin=*]
\item Construct the fixed-point map $g$ (so that iterates satisfy $\vz^{k}=g(\vz^{k-1})$).
\item Compute the Jacobian $Dg$ (typically by differentiating optimality conditions via the implicit function theorem).
\item Show $\det(Dg)\neq 0$ on the domain, i.e., the fixed-point map is locally invertible everywhere.
\item Show that any strict saddle corresponds to an unstable fixed point of $g$.
\end{enumerate}

\subsubsection{Constructing the fixed-point map $g$}
To simplify notation, we rewrite \eqref{eqn:bmm} as

\begin{align*}
(\vx^k,\vlambda^{k-1})&=g_{1}(\vx^{k-1},\vlambda^{k-1}),\\
(\vx^k,\vlambda^{k})&=g_{2}(\vx^k,\vlambda^{k-1}),
\end{align*}
which implies that
\begin{equation}
\begin{aligned}
(\vx^{k},\vlambda^{k})&=(g_{2}\circ g_{1})(\vx^{k-1},\vlambda^{k-1})\doteq g(\vx^{k-1},\vlambda^{k-1}).
\label{eqn:g:bmm}
\end{aligned}
\end{equation}

\paragraph{Fixed points and KKT points.}
For \eqref{eqn:problem:1}, the algorithmic state is $\vz=(\vx,\vlambda)$. A point $(\vx^\star,\vlambda^\star)\in\Omega$ is a fixed point of the fixed-point map $g$ if and only if it satisfies the KKT conditions
\[
\mA\vx^\star=\vb,\qquad \nabla f(\vx^\star)+\mA^\top\vlambda^\star=\vzero.
\]
At a fixed point, the $g_1$ update returns $\vx^+=\vx$, where $\vx^+$ denotes the updated primal variable, so the Bregman gradient term $\nabla h(\vx^+)-\nabla h(\vx)$ vanishes; the $\vx$ update optimality condition and the dual update equation then reduce to the KKT system. Conversely, if $(\vx^\star,\vlambda^\star)$ satisfies the KKT conditions, then it satisfies the optimality conditions of \eqref{eqn:bmm}; since the $\vx$-subproblem is strongly convex on the primal domain, its minimizer is unique and therefore $\vx^+=\vx^\star$, i.e., $(\vx^\star,\vlambda^\star)$ is a fixed point.

\subsubsection{Computing the Jacobian matrix $Dg$}
Using the chain rule, we have
$
Dg=Dg_{2}Dg_{1}.
$
We first compute $Dg_{2}$. For notational convenience, denote $(\vx,\vlambda^{+})=g_{2}(\vx,\vlambda)$. Since $g_2$ is determined by
\begin{align}
\vlambda^{+}= \vlambda+\rho(\mA\vx-\vb),
\label{eqn:opt:block2:bmm}
\end{align}
we have
\begin{align}
Dg_{2}(\vx,\vlambda)=
\begin{bmatrix}
\eye_{n} &  \\
\nabla_{\vx}\vlambda^+& \nabla_{\vlambda}\vlambda^+
\end{bmatrix}
=
\begin{bmatrix}
\eye_{n} &\\
\rho \mA &  \eye_{p}
\end{bmatrix}.
\label{Dg2:bmm}
\end{align}
We now compute $Dg_{1}$.
To simplify notation, denote $(\vx^{+},\vlambda)=g_{1}(\vx,\vlambda)$. Then
\begin{align}
Dg_{1}(\vx,\vlambda)=
\begin{bmatrix}
\nabla_{\vx} \vx^{+}&  \nabla_{\vlambda}\vx^{+}\\
&\eye_{p}
\end{bmatrix}
\label{eqn:MM:Dg1}
\end{align}
Relative smoothness implies the positive-semidefinite inequality (Definition~\ref{defi:adaptive:lipschitz}) 
\[
L_f\nabla^2 h(\vx)\pm \nabla^2 f(\vx)\succeq 0;\] 
in particular,
\[
\nabla^2 f(\vx^{+})\succeq -L_f\nabla^2 h(\vx^{+}).
\]
Since $h$ is strongly convex, $\nabla^2 h(\vx^{+})\succ 0$. Hence for $\eta<1/L_f$,
\[
\nabla^{2}f(\vx^{+})+\rho\mA^{\top}\mA+\frac{1}{\eta}\nabla^{2} h(\vx^{+})
\succeq \Bigl(\frac{1}{\eta}-L_f\Bigr)\nabla^{2} h(\vx^{+}) \succ 0.
\]
By \Cref{ass:domain}, the $\vx$ update is well defined. The positive definiteness above makes the $\vx$ subproblem strongly convex, hence its minimizer is unique, and the implicit function theorem applies to the optimality condition.

We use the following standard form of the implicit function theorem \cite[Theorem~1.3.1]{krantz2003introduction}: if $F(u,v)=0$ and $\partial_v F(u_0,v_0)$ is nonsingular, then locally $v$ is a $\mathcal{C}^1$ function of $u$, and its derivative is given by \eqref{eq:ift-derivative}.
\begin{equation}
\label{eq:ift-derivative}
Dv(u)=-\bigl(\partial_v F(u,v(u))\bigr)^{-1}\partial_u F(u,v(u)).
\end{equation}
The optimality condition of the first block of \eqref{eqn:bmm} is
\begin{equation}
\begin{aligned}
&\nabla f(\vx^{+})+\mA^{\top}\vlambda+\rho\mA^{\top}(\mA\vx^{+}-\vb)+\frac{\nabla h(\vx^{+})-\nabla h(\vx)}{\eta}=\vzero.
\end{aligned} 
\label{eqn:opt:block1:bmm}
\end{equation}
We take $u=(\vx,\vlambda)$ and $v=\vx^{+}$, with $F(u,v)$ equal to the left-hand side of \eqref{eqn:opt:block1:bmm}. The Jacobian $\partial_v F$ equals $\nabla^{2}f(\vx^{+})+\rho\mA^{\top}\mA+\frac{1}{\eta}\nabla^{2} h(\vx^{+})$, which is nonsingular by the inequality above.
Applying the implicit function theorem to \eqref{eqn:opt:block1:bmm},
\begin{equation}
\begin{aligned}
& \nabla_{\vx}\vx^{+}=\left(\nabla^{2}f(\vx^{+})+\rho\mA^{\top}\mA+\frac{1}{\eta}\nabla^{2} h(\vx^{+})\right)^{-1} \left(\frac{1}{\eta}\nabla^{2} h(\vx)\right)
\\
&  \nabla_{\vlambda} \vx^{+}=-\left(\nabla^{2}f(\vx^{+})+\rho\mA^{\top}\mA+\frac{1}{\eta}\nabla^{2} h(\vx^{+})\right)^{-1}\mA^{\top}.
\end{aligned}
\label{eqn:MM:Dg1:1}
\end{equation}

Therefore, plugging \eqref{eqn:MM:Dg1:1} into \eqref{eqn:MM:Dg1}, we get
\begin{equation}\begin{split}
	Dg_{1}(\vx,\vlambda)
&=\begin{bmatrix}
\left(\nabla^{2}f(\vx^{+})+\rho\mA^{\top}\mA+\frac{1}{\eta}\nabla^{2} h(\vx^{+})\right)^{-1}
&
\\
& \eye_{p}
\end{bmatrix}
\begin{bmatrix}
\frac{1}{\eta}\nabla^{2} h(\vx)
&
-\mA^{\top}
\\
& \eye_{p}
\end{bmatrix}
\end{split}\label{Dg1:bmm}
\end{equation}
Finally, by the chain rule, we get that $Dg(\vx,\vlambda)=Dg_{2}(g_{1}(\vx,\vlambda))Dg_{1}(\vx,\vlambda)$, which further implies
\begin{equation}\begin{split}
&Dg(\vx,\vlambda)
=
\begin{bmatrix}
\eye_{n} &\\
\rho \mA &  \eye_{p}
\end{bmatrix}
\begin{bmatrix}
\left(\nabla^{2}f(\vx^{+})+\rho\mA^{\top}\mA+\frac{1}{\eta}\nabla^{2} h(\vx^{+})\right)^{-1}
&
\\
& \eye_{p}
\end{bmatrix}
\begin{bmatrix}
\frac{1}{\eta}\nabla^{2} h(\vx)
&
-\mA^{\top}
\\
& \eye_{p}
\end{bmatrix}.
\end{split}\label{def:Dg:bmm}
\end{equation}

\subsubsection{Showing that \texorpdfstring{$\det(Dg)$}{det(Dg)} is nonzero globally}
Because
$Dg(\vx,\vlambda)=Dg_{2}(g_{1}(\vx,\vlambda))Dg_{1}(\vx,\vlambda)
$
and $Dg_{1}$, $Dg_{2}$ are square matrices, it suffices to show the global nonsingularity of both $Dg_{1}$, $Dg_{2}$.
$Dg_1$ is nonsingular for all $(\vx,\vlambda)\in\Omega$ because it is a product of two nonsingular matrices in \eqref{Dg1:bmm}: the first factor $(\nabla^{2}f(\vx^{+})+\rho\mA^{\top}\mA+\frac{1}{\eta}\nabla^{2}h(\vx^{+}))^{-1}$ is nonsingular by the positive definiteness shown above, and the second factor has $\det\!\bigl(\begin{bmatrix}\begin{smallmatrix}\frac{1}{\eta}\nabla^2 h(\vx) & -\mA^\top \\ 0 & \eye_p\end{smallmatrix}\end{bmatrix}\bigr) = \det(\frac{1}{\eta}\nabla^2 h(\vx))\neq 0$ since $h$ is strongly convex. Moreover, $Dg_2(\vx,\vlambda)=\bigl[\begin{smallmatrix}\eye_n&0\\ \rho\mA&\eye_p\end{smallmatrix}\bigr]$ by \eqref{Dg2:bmm}, so $\det(Dg_2)=1$.

\subsubsection{Showing any strict saddle lies in the unstable set}
\begin{lemma}\label{lem:mm:critical}
For any stationary point $\vx^{\star}$ of Problem \eqref{eqn:problem:1}, there exists $\vlambda^{\star}\in\R^{p}$ such that
$(\vx^{\star},\vlambda^{\star})$ is a fixed point of the mapping $g\doteq g_{2}\circ g_{1}$. 	
\end{lemma}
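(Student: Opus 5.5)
The plan is to take the multiplier supplied by first-order stationarity and check directly that the resulting pair is left unchanged by both $g_1$ and $g_2$. Since $\vx^\star$ is a stationary point of Problem~\eqref{eqn:problem:1} in the sense of \Cref{def:critical:point}, there exists $\vlambda^\star\in\R^p$ with $\mA\vx^\star=\vb$ and $\nabla f(\vx^\star)+\mA^\top\vlambda^\star=\vzero$. We will show that the state $(\vx^\star,\vlambda^\star)$ satisfies $g(\vx^\star,\vlambda^\star)=(\vx^\star,\vlambda^\star)$. Implicitly, we assume $(\vx^\star,\vlambda^\star)\in\Omega$, as in the fixed-point/KKT discussion following \eqref{eqn:g:bmm}.

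\textbf{Step 1: the primal update.} Compute $g_1(\vx^\star,\vlambda^\star)=(\vx^+,\vlambda^\star)$, where $\vx^+$ minimizes
\[
\calL(\cdot,\vlambda^\star)+\tfrac{1}{\eta}D_h(\cdot,\vx^\star).
\]
We check that $\vx^+=\vx^\star$ satisfies the optimality condition \eqref{eqn:opt:block1:bmm}.
\begin{itemize}
\item The Bregman term $\nabla h(\vx^\star)-\nabla h(\vx^\star)$ vanishes.
\item The penalty term $\rho\mA^\top(\mA\vx^\star-\vb)$ vanishes by feasibility.
\item The remaining terms $\nabla f(\vx^\star)+\mA^\top\vlambda^\star$ vanish by the KKT condition.
\end{itemize}

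\textbf{Step 2: uniqueness of the minimizer.} Satisfying the first-order condition does not by itself make $\vx^\star$ the argmin. Here we use the positive-definiteness estimate already established,
\[
\nabla^2 f+\rho\mA^\top\mA+\tfrac1\eta\nabla^2 h\succeq(\tfrac1\eta-L_f)\nabla^2 h\succ0,
\]
valid for $\eta<1/L_f$ with $h$ strongly convex. This makes the subproblem objective strictly (indeed strongly) convex on the primal domain. Its stationary point is therefore the unique global minimizer, so $\vx^+=\vx^\star$. This is the only step requiring care, since it is where the two-sided relative smoothness and the stepsize condition enter. Otherwise a nonconvex subproblem could have a different global minimizer, and the map $g_1$ would not return $\vx^\star$.

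\textbf{Step 3: the dual update.} Apply $g_2$ using \eqref{eqn:opt:block2:bmm}:
\[
\vlambda^+=\vlambda^\star+\rho(\mA\vx^\star-\vb)=\vlambda^\star,
\]
again by feasibility. Hence $g(\vx^\star,\vlambda^\star)=g_2(\vx^\star,\vlambda^\star)=(\vx^\star,\vlambda^\star)$, which is the claimed fixed point.
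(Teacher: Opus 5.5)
Your proof is correct and follows the paper's approach. You take the KKT multiplier and check that $(\vx^\star,\vlambda^\star)$ satisfies both the primal optimality condition \eqref{eqn:opt:block1:bmm} and the dual update \eqref{eqn:opt:block2:bmm}; the paper's one-line proof of the lemma leaves implicit the uniqueness of the strongly convex subproblem's minimizer (it appears only in the preceding ``Fixed points and KKT points'' paragraph), so your explicit Step~2 and the requirement $(\vx^\star,\vlambda^\star)\in\Omega$ are worthwhile clarifications rather than a different route.
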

\begin{proof}[Proof of \Cref{lem:mm:critical}]
By \Cref{def:critical:point},  for any stationary point $\vx^{\star}$ of  \eqref{eqn:problem:1},  there exists $\vlambda^{\star}$ such that $\mA\vx^\star=\vb$ and $\nabla f(\vx^{\star})+ \mA^{\top}\vlambda^{\star}=\vzero$.
This implies that $(\vx^{+},\vlambda^{+})=(\vx^{\star},\vlambda^{\star})$ and $(\vx,\vlambda)=(\vx^{\star},\vlambda^{\star})$ satisfy  \eqref{eqn:opt:block1:bmm} (defining $g_1$) and \eqref{eqn:opt:block2:bmm} (defining $g_2$) and therefore $(\vx^\star,\vlambda^\star)$ is a fixed point of  $g_2\circ g_1$.
\end{proof}

\begin{lemma}\label{lem:mm:unstable}
Let $\vx^\star$ be a strict saddle of Problem~\eqref{eqn:problem:1}, and let $\vlambda^\star$ be a corresponding multiplier such that $(\vx^\star,\vlambda^\star)\in\Omega$. Then the Jacobian matrix $Dg(\vx^{\star},\vlambda^{\star})$ has an eigenvalue with magnitude greater than 1.
\end{lemma}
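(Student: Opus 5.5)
We work at the fixed point $(\vx^\star,\vlambda^\star)$ given by \Cref{lem:mm:critical}. There $\vx^+=\vx=\vx^\star$, so the formula \eqref{def:Dg:bmm} simplifies considerably. Write $\mH\doteq\nabla^2 f(\vx^\star)$ and $\mM\doteq\frac{1}{\eta}\nabla^2 h(\vx^\star)\succ 0$, and set $\mC\doteq\mH+\rho\mA^\top\mA+\mM$. The matrix $\mC$ is positive definite by the relative-smoothness bound established before \eqref{eqn:MM:Dg1:1}. Multiplying out the three factors gives
\[
Dg(\vx^\star,\vlambda^\star)=\begin{bmatrix}\mC^{-1}\mM & -\mC^{-1}\mA^\top\\ \rho\mA\mC^{-1}\mM & \eye_p-\rho\mA\mC^{-1}\mA^\top\end{bmatrix}.
\]
The plan is to show that this matrix has a real eigenvalue $\mu>1$. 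We do this by reducing the eigenvalue problem to the singularity of a real symmetric matrix pencil in the primal variable alone, and then using an intermediate-value argument on its smallest eigenvalue.

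\emph{Reduction.} Suppose $\mu\neq1$ and $(\vu,\vv)\neq\vzero$ is an eigenvector. Two equations follow.
\begin{itemize}[leftmargin=*]
\item Writing $\vw\doteq\mC^{-1}(\mM\vu-\mA^\top\vv)$, the first block row reads $\vw=\mu\vu$.
\item The second block row reads $\rho\mA\vw+\vv=\mu\vv$. Substituting $\vw=\mu\vu$ gives $\vv=\frac{\rho\mu}{\mu-1}\mA\vu$.
\end{itemize}
Plugging $\vv$ back into $\mM\vu-\mA^\top\vv=\mu\mC\vu$ and dividing by $\mu$ yields $\mQ(\mu)\vu=\vzero$, where
\[
\mQ(\mu)\doteq \mH+\Bigl(1-\frac1\mu\Bigr)\mM+\frac{\rho\mu}{\mu-1}\mA^\top\mA-\rho\mA^\top\mA+\rho\mA^\top\mA
=\mH+\Bigl(1-\frac1\mu\Bigr)\mM+\frac{\rho\mu}{\mu-1}\mA^\top\mA .
\]
The last form comes from recombining $\mu\mC=\mu(\mH+\rho\mA^\top\mA+\mM)$ with the terms moved from the left-hand side; we will double-check this bookkeeping in the write-up.

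Conversely, suppose $\mu>1$ and $\mQ(\mu)\vu=\vzero$ with $\vu\neq\vzero$. Define $\vv$ by the formula above. Every step of the reduction is reversible because $\mC$ is invertible, so $(\vu,\vv)$ is an eigenvector of $Dg$ with eigenvalue $\mu$. It therefore suffices to find $\mu>1$ at which $\mQ(\mu)$ is singular. This reduction, in particular checking that it is reversible and that no eigenvector is lost, is the step I expect to need the most care. The remaining steps are routine.

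\emph{Sign change.} For $\mu\in(1,\infty)$ the matrix $\mQ(\mu)$ is real symmetric and depends continuously on $\mu$, so $\phi(\mu)\doteq\lambda_{\min}(\mQ(\mu))$ is continuous on $(1,\infty)$.
\begin{itemize}[leftmargin=*]
\item \emph{Large $\mu$.} As $\mu\to\infty$, $\mQ(\mu)\to\mH+\mM+\rho\mA^\top\mA=\mC\succ0$, so $\phi(\mu)>0$ for all large $\mu$.
\item \emph{$\mu$ near $1$.} Since $\vx^\star$ is a strict saddle, there is $\vd\in\Null(\mA)$ with $\vd^\top\mH\vd<0$. For this $\vd$ the $\mA^\top\mA$ term vanishes, which matters because that term blows up as $\mu\to1^+$. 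Hence $\vd^\top\mQ(\mu)\vd=\vd^\top\mH\vd+(1-\frac1\mu)\vd^\top\mM\vd$, which tends to $\vd^\top\mH\vd<0$ as $\mu\to1^+$. So $\phi(\mu)<0$ for $\mu$ close to $1$.
\end{itemize}
By the intermediate value theorem there is $\mu^\star>1$ with $\phi(\mu^\star)=0$. Then $\mQ(\mu^\star)$ is singular, and by the reduction $\mu^\star$ is an eigenvalue of $Dg(\vx^\star,\vlambda^\star)$ with $|\mu^\star|>1$.
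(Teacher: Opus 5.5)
Your proof is correct and is essentially the paper's argument. Your matrix $\mQ(\mu)$ is exactly $\mu^{-1}\mJ(\mu-1)$, with $\mJ$ as in \eqref{def:Jmu:bmm}. The paper writes $Dg=\eye-\mPhi$ and seeks $\det(\mPhi+\mu\eye)=0$ through a Schur complement, i.e.\ an eigenvalue $1+\mu$, whereas you eliminate $\vv$ from the eigen-equations directly. The sign change is the same in both: a direction in the null space of $\mA$ at the lower end, $\mQ(\mu)\to\nabla^2 f(\vx^\star)+\rho\mA^\top\mA+\frac1\eta\nabla^2 h(\vx^\star)\succ0$ at the upper end, and then the intermediate value theorem.

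The only blemish is your garbled intermediate display for $\mQ(\mu)$. After dividing by $\mu$, the $\mA^\top\mA$ coefficient is $\rho+\frac{\rho}{\mu-1}=\frac{\rho\mu}{\mu-1}$, so your final formula is right.
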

\begin{proof}[Proof of \Cref{lem:mm:unstable}]
To simplify notation, we denote $\mH=\nabla^{2}h(\vx^{\star}),\mF=\nabla^{2}f(\vx^{\star})$.
We compute the Jacobian matrix $Dg(\vx^{\star},\vlambda^{\star})$ by plugging 
\[(\vx,\vlambda)=(\vx^{+},\vlambda^{+})=(\vx^{\star},\vlambda^{\star})\]
into \eqref{def:Dg:bmm}:
\begin{align*}
Dg(\vx^{\star},\vlambda^{\star})
&=
\begin{bmatrix}
\eye_{n} &\\
\rho \mA & \eye_{p}
\end{bmatrix}
\begin{bmatrix}
(\mF+\rho\mA^{\top}\mA+\frac{1}{\eta} \mH)^{-1}
&
\\
& \eye_{p}
\end{bmatrix}
\begin{bmatrix}
\frac{1}{\eta} \mH
&
-\mA^{\top}
\\
& \eye_{p}
\end{bmatrix}
\\
&=
\begin{bmatrix}
\mF+\rho\mA^{\top}\mA+\frac{1}{\eta} \mH
&
\\
-\rho\mA& \eye_{p}
\end{bmatrix}^{-1}
\begin{bmatrix}
\frac{1}{\eta} \mH
&
-\mA^{\top}
\\
& \eye_{p}
\end{bmatrix}\\&=\eye - 
\begin{bmatrix}
\mF+\rho\mA^{\top}\mA+\frac{1}{\eta} \mH
&
\\
-\rho\mA
& \eye_{p}
\end{bmatrix}^{-1}
\begin{bmatrix}
\mF+\rho\mA^{\top}\mA
&
\mA^{\top}
\\
-\rho\mA
&
\end{bmatrix}
\\&\doteq\eye - \mPhi.
\end{align*}
It suffices to show that $\mPhi$ has a real negative eigenvalue (equivalently, $\det(\mPhi+\mu\eye)=0$ for some $\mu>0$).
For $\mu>0$, taking the Schur complement with respect to the bottom-right block $\mu\eye_p$ gives
\begin{align*}
0=\det(\mPhi+\mu\eye)
&\Longleftrightarrow
\det\!\left(
\begin{bmatrix}
(1+\mu)\mF+(1+\mu)\rho\mA^{\top}\mA+\frac{\mu}{\eta}\mH &
\mA^{\top}\\
-(1+\mu)\rho\mA &\mu\eye_{p}
\end{bmatrix}
\right)=0
\\
&\Longleftrightarrow
\mu^{p}\det\!\left((1+\mu)\mF+\frac{\mu}{\eta}\mH+\frac{(1+\mu)^{2}}{\mu}\rho\mA^{\top}\mA\right)=0
\\
&\Longleftrightarrow
\det\!\left((1+\mu)\mF+\frac{\mu}{\eta}\mH+\frac{(1+\mu)^{2}}{\mu}\rho\mA^{\top}\mA\right)=0.
\end{align*}
Note that
\begin{align}
\mJ(\mu)\doteq(1+\mu)\mF+\frac{\mu}{\eta}\mH + \frac{(1+\mu)^{2}}{\mu}\rho\mA^{\top}\mA
\label{def:Jmu:bmm}
\end{align}
is real symmetric for every $\mu>0$, and its entries depend continuously on $\mu$.

\begin{remark}[Intuition for the test direction]\label{rem:intuition:ALM}
The strict-saddle condition (\Cref{def:critical:point}, item~3) provides $\vz\in\Null(\mA)$ with $\vz^{\top}\nabla^2 f(\vx^\star)\vz<0$.
Since $\mA\vz=\vzero$, we have $\vz^\top\mA^\top\mA\vz=\|\mA\vz\|^2=0$. Thus the penalty term in $\vz^\top\mJ(\mu)\vz$ vanishes for every $\mu>0$, so along this feasible direction only the Hessian and Bregman kernel terms remain.
At $\mu\to0^{+}$ the Bregman kernel term $\frac{\mu}{\eta}\mH$ vanishes as well, so the sign of $\vz^{\top}\mJ(\mu)\vz$ is governed by the negative Hessian alone, giving $\lambda_{\min}(\mJ(\mu))<0$.
At $\mu\to\infty$ both $\frac{\mu}{\eta}\mH\succ0$ and $\frac{(1+\mu)^{2}}{\mu}\rho\mA^{\top}\mA\succeq0$ dominate, forcing $\mJ(\mu)\succ0$.
Continuity of eigenvalues then yields a zero crossing of $\lambda_{\min}(\mJ(\mu))$ at some intermediate $\mu^\star>0$.
\end{remark}

First, we show that for $\mu\to0^{+}$, $\lambda_{\min}(\mJ(\mu))<0$. Using the third item of \Cref{def:critical:point} and that $\mF=\nabla^2 f(\vx^\star)$,  there exists $\vz\in\Null(\mA)$ such that $\vz^{\top}\mF \vz<-\sigma\|\vz\|_{2}^{2}$ for some $\sigma>0$.   Now
\begin{align*}
\vz^{\top}\mJ(\mu)\vz
&= (1+\mu)\vz^{\top}\mF\vz+\frac{\mu}{\eta}\vz^{\top}\mH\vz
< -\sigma(1+\mu)\|\vz\|_{2}^{2}+\frac{\mu}{\eta}\vz^{\top}\mH\vz
=-\sigma\|\vz\|_{2}^{2}-\mu\left(\sigma\|\vz\|_{2}^{2}-\frac{1}{\eta}\vz^{\top}\mH\vz\right).
\end{align*}
The first term $-\sigma\|\vz\|_{2}^{2}$ is strictly negative and independent of $\mu$, while the second term is $O(\mu)$; hence the right-hand side is negative for all sufficiently small $\mu>0$.
Since $\mJ(\mu)$ is a symmetric matrix, this implies that $\lambda_{\min}(\mJ(\mu))<0$ when $\mu$ is a sufficiently small positive number.
Second, note that
\begin{align*}
\lim_{\mu\to\infty} \frac{\mJ(\mu)}{\mu}&= \mF+ \frac{\mH}{\eta}+\rho\mA^{\top}\mA\succ 0,
\end{align*}
where the positive definiteness follows from $\mF\succeq -L_f\mH$ (relative smoothness) and $\frac{1}{\eta}\mH\succ L_f\mH$ (since $\eta<1/L_f$ and $\mH\succ0$), giving $\mF+\frac{1}{\eta}\mH\succ0$, plus $\rho\mA^\top\mA\succeq0$.
Since $\mJ(\mu)$ is symmetric, its eigenvalues are real and continuous in $\mu$ \cite[Theorem~5.1]{kato2013perturbation}; the intermediate value theorem then gives $\lambda_{\min}(\mJ(\mu))=0$ for some $\mu>0$, i.e., $\det(\mJ(\mu))=0$.
\end{proof}

By \Cref{thm:jason}, the set of initial points from which \Cref{alg:bmm} converges to a strict saddle has Lebesgue measure zero.
Since random initialization is absolutely continuous with respect to Lebesgue measure, the probability that the iterates converge to a strict saddle is zero.

\subsection{Proof sketch for the two-block case (\Cref{thm:badmm})}
\label{sec:sketch:twoblock}
The full proof is in \Cref{pf:thm:badmm}; here we outline the key steps and highlight where the argument differs from the single-block case above.

\paragraph{Step 1: Fixed-point map and Jacobian.}
One iteration of \Cref{alg:badmm} is decomposed as $g=g_3\circ g_2\circ g_1$ (primal $\vx$-update, primal $\vy$-update, dual ascent). By the implicit function theorem applied to the two primal optimality conditions, $g_1$ and $g_2$ are $\mathcal{C}^1$ on $\Omega$; their Jacobians are nonsingular because the corresponding subproblem Hessians with respect to the updated primal variables are positive definite. The dual update $g_3$ has a nonsingular block triangular Jacobian, and hence $Dg=Dg_3\,Dg_2\,Dg_1$ is well defined and nonsingular on $\Omega$.

\paragraph{Step 2: Jacobian at a fixed point.}
At a strict-saddle fixed point $(\vx^\star,\vy^\star,\vlambda^\star)$, the Jacobian evaluates to $Dg=\eye-\mPhi$ with
\[
\mPhi=\mM^{-1}\mN,
\]
where $\mM$ collects the Bregman-regularized subproblem Hessian blocks, while $\mN$ collects the remaining curvature and primal dual coupling blocks arising from the augmented Lagrangian and the Gauss Seidel ordering (the explicit expression is given in \eqref{phi:badmm} of \Cref{pf:thm:badmm}).
It suffices to find $\mu>0$ with $\det(\mPhi+\mu\eye)=0$.

\paragraph{Step 3: Schur reduction and symmetrization.}
After Schur-complementing the dual block (as in the single-block case), the determinant condition reduces to the singularity of a $2\times 2$ block matrix $\mC(\mu)\in\R^{(n+m)\times(n+m)}$. Unlike the single-block case, $\mC(\mu)$ is \emph{not} symmetric: its $(1,2)$ and $(2,1)$ blocks carry different scalar coefficients depending on~$\mu$. The remedy is a diagonal similarity $\mD=\mathrm{diag}(\eye_n,\alpha(\mu)\eye_m)$ with $\alpha(\mu)=\sqrt{(2+\mu+1/\mu)/(2+1/\mu)}$, which symmetrizes $\mC(\mu)$ to a real-symmetric matrix $\mJ(\mu)=\mD^{-1}\mC(\mu)\mD$ without changing its determinant.

\paragraph{Step 4: Sign change via a $\mu$-dependent test direction.}
The symmetrization changes the natural test direction $(d_x,d_y)$ to $(d_x,s(\mu)d_y)$ with $s(\mu)=\alpha(\mu)\to 1$ as $\mu\to 0^+$. Using the KKT constraint $\mA d_x+\mB d_y=\vzero$ and the strict-saddle negativity $d_x^\top\nabla^2 f_1 d_x+d_y^\top\nabla^2 f_2 d_y<0$, one shows $\lambda_{\min}(\mJ(\mu))<0$ for small $\mu>0$, while $\mJ(\mu)/\mu\to\mathrm{diag}(\mF_1+\frac{1}{\eta}\mH_1+\rho\mA^\top\mA,\,\mF_2+\frac{1}{\eta}\mH_2+\rho\mB^\top\mB)\succ0$ gives $\lambda_{\min}(\mJ(\mu))>0$ for large $\mu$. The intermediate value theorem then yields the desired zero crossing. Equivalently, $\det(\mPhi+\mu\eye)=0$ for some $\mu>0$, so $Dg=\eye-\mPhi$ has an eigenvalue with magnitude greater than one.
 
\subsection{\texorpdfstring{Proof sketch for the consensus case (\Cref{thm:cadmm})}{Proof sketch for the consensus case (Theorem~\ref{thm:cadmm})}}
\label{sec:sketch:consensus}
The full proof is in \Cref{sec:thm:cadmm}. Here we indicate how the single block mechanism and the two block spectral argument extend to the star consensus formulation of \Cref{sec:mult}.

\paragraph{Step 1: Fixed point map and nonsingularity.}
One iteration of \Cref{alg:cadmm} defines a smooth fixed point map on the state $(\vx_1,\vy_1,\ldots,\vx_J,\vy_J,\vlambda_2,\ldots,\vlambda_J)$. The update decomposes into $2J+1$ elementary maps, one $\vx_j$ update and one $\vy_j$ update for each agent, followed by the dual update. Relative bi-smoothness, strong bi-convexity of the kernels, and the stepsize condition make each local Bregman proximal Hessian positive definite, so the Jacobian of each $\vx_j$ and $\vy_j$ update is nonsingular by the implicit function theorem. The dual update has a nonsingular block triangular Jacobian. Hence every elementary Jacobian is nonsingular, and the chain rule gives $\det(Dg)\neq0$ on $\Omega$.

\paragraph{Step 2: Jacobian at a strict saddle fixed point.}
At a strict saddle KKT point of the consensus problem, the Jacobian again has the form $Dg=\eye-\mPhi$. As in the two block case, it is enough to find $\mu>0$ such that $\det(\mPhi+\mu\eye)=0$. After a positive row column scaling that symmetrizes the determinant condition and after eliminating the dual blocks by Schur complementation, the condition reduces to the singularity of a real symmetric matrix $\tilde{\mJ}(\mu)$ built from the local Hessian blocks and the star consensus coupling.

\paragraph{Step 3: Star consensus cancellation.}
The key new feature is the star graph structure. A feasible strict saddle direction has equal $\vx$ components across all agents, say $(\vd_\vx,\vd_\vy^1,\ldots,\vd_\vx,\vd_\vy^J)$, because the constraints impose $\vx_j=\vx_1$. This equal $\vx$ direction lies in the kernel of the star graph Laplacian. Consequently, the consensus penalty and the Schur complement correction vanish on the test direction, leaving only the block Hessian contribution. As $\mu\to0^+$, the corresponding quadratic form has a well-defined limit, and this limit is exactly the strict saddle negative curvature.

\paragraph{Step 4: Large $\mu$ positivity and instability.}
For large $\mu$, the scaled matrix $\tilde{\mJ}(\mu)/\mu$ converges to a block diagonal matrix whose diagonal blocks are the positive definite Bregman proximal Hessian blocks. Thus $\lambda_{\min}(\tilde{\mJ}(\mu))>0$ for large $\mu$, while the previous step gives $\lambda_{\min}(\tilde{\mJ}(\mu))<0$ for small positive $\mu$. Continuity of the eigenvalues gives a zero crossing, hence an eigenvalue of $Dg$ with magnitude greater than one. Applying \Cref{thm:jason} then excludes convergence to strict saddle KKT points almost surely.

\section{Numerical Experiments}
\label{sec:application}

We consider two polynomial models: distributed matrix factorization and symmetric tensor factorization. Both fall outside the globally Lipschitz-smooth regime. The former fits the separable consensus framework of \Cref{thm:cadmm} and admits an explicit relative-bi-smoothness certificate; the latter is included because the same splitting idea leads to explicit Bregman-proximal updates even though the resulting objective is not separable across agents. For distributed matrix factorization, the corresponding centralized problem has a benign strict saddle landscape under standard assumptions, so second order stationary points are globally optimal; the tensor example is used as an algorithmic illustration of the same Bregman proximal splitting idea beyond the separable setting.

\subsection{Distributed matrix factorization}
Given a data matrix $\mZ\in\R^{n\times m}$ with column partitioning
$
\mZ=
\begin{bmatrix}
\mZ_1 &\mZ_2 &\cdots & \mZ_J	
\end{bmatrix},
$
where $\mZ_j\in\R^{n\times m_j}$,
suppose $\operatorname{rank}(\mZ) = r$ and let $\mX^\star\in\R^{n\times r}$ span the column space of $\mZ$. The centralized and distributed matrix factorization problems are
\begin{align}
\underline{\tmop{Centralized}}:&\minimize_{\mX,\{\mY_j\}}  \sum_{j=1}^J \|\mX\mY_j^\top-\mZ_j\|_F^2;
\label{centralized}
\\
\underline{\tmop{Distributed}}:&\minimize_{\{\mX_j\},\{\mY_j\}}  \sum_{j=1}^J \|\mX_j\mY_j^\top-\mZ_j\|_F^2 \label{distributed} ~\st~ \mX_j = \mX_1 ~ \forall j\ge 2.
\end{align}
The distributed formulation enforces exact consensus on the shared factor while allowing local right factors, a standard model in distributed low-rank matrix recovery~\cite{zhu2019distributed}.

\paragraph{Applicability of \Cref{thm:cadmm}.}
Each local loss $f_j(\mX_j,\mY_j)\doteq \|\mX_j\mY_j^\top-\mZ_j\|_F^2$ is a polynomial of bidegree~$(2,2)$ in the entries of~$(\mX_j,\mY_j)$, hence relatively bi-smooth under a suitable kernel by \Cref{lem:h:exist2}. In contrast, many standard consensus algorithms (e.g., linearized ADMM~\cite{hong2018gradient}, decentralized gradient descent~\cite{zeng2018nonconvex}, and EXTRA~\cite{shi2015extra}) require global Lipschitz gradient assumptions, which do not hold for such polynomial objectives. The centralized formulation~\eqref{centralized} has a strict-saddle landscape with no spurious local minima under the rank condition above and standard identifiability conditions~\cite{zhu2019distributed,nouiehed2018learning}, so second-order stationary points are global minimizers.

\paragraph{Kernel choice and block subproblems.}
With $\mX_j\in\R^{n\times r}$ and $\mY_j\in\R^{m_j\times r}$, we use the product-type bi-kernel
\begin{equation}
h_j(\mX_j,\mY_j)\doteq\Big(\tfrac{1}{2}\|\mX_j\|_F^2+1\Big)\Big(\tfrac{1}{2}\|\mY_j\|_F^2+1\Big),
\end{equation}
which is strongly bi-convex.  Writing $h_j=h_{x,j}\cdot h_{y,j}$ with $h_{x,j}(\mX_j)\doteq\tfrac{1}{2}\|\mX_j\|_F^2+1$ and $h_{y,j}(\mY_j)\doteq\tfrac{1}{2}\|\mY_j\|_F^2+1$, the product-kernel identity in \Cref{sec:prelim} gives
\begin{align}
D^{x}_{h_j}(\mX_j,\mX_j';\mY_j')&=h_{y,j}(\mY_j')\,D_{h_{x,j}}(\mX_j,\mX_j')=\tfrac{1}{2}\Big(\tfrac{1}{2}\|\mY_j'\|_F^2+1\Big)\|\mX_j-\mX_j'\|_F^2,\nn\\
D^{y}_{h_j}(\mY_j,\mY_j';\mX_j')&=h_{x,j}(\mX_j')\,D_{h_{y,j}}(\mY_j,\mY_j')=\tfrac{1}{2}\Big(\tfrac{1}{2}\|\mX_j'\|_F^2+1\Big)\|\mY_j-\mY_j'\|_F^2.
\end{align}
In \Cref{alg:cadmm}, the Bregman term $\frac{1}{\eta}D^x_{h_j}(\mX_j,\mX_j^{k-1};\mY_j^{k-1})$ contributes a quadratic proximal regularizer $\frac{\alpha_j^k}{2}\|\mX_j-\mX_j^{k-1}\|_F^2$ with iteration-dependent coefficient
$\alpha_j^k\doteq\frac{1}{\eta}(\tfrac{1}{2}\|\mY_j^{k-1}\|_F^2+1)$;
the subsequent $\mY_j$-update uses $\frac{\beta_j^k}{2}\|\mY_j-\mY_j^{k-1}\|_F^2$ with $\beta_j^k\doteq\frac{1}{\eta}(\tfrac{1}{2}\|\mX_j^{k}\|_F^2+1)$.
For a non-hub agent ($j\ge 2$), setting the gradient of the $\mX_j$-subproblem to zero yields the $r\times r$ normal equation
\[
\mX_j^k\big(2\mY_j^{k-1\top}\mY_j^{k-1}+\gamma_j^k\eye_r\big)=2\mZ_j\mY_j^{k-1}-\vlambda_j^{k-1}+\rho\mX_1^k+\alpha_j^k\mX_j^{k-1},
\]
where $\gamma_j^k\doteq\rho+\alpha_j^k$ (the hub-node update has the same structure with $(J{-}1)\rho$ replacing $\rho$). Since $\gamma_j^k>0$, the coefficient matrix $2\mY_j^{k-1\top}\mY_j^{k-1}+\gamma_j^k\eye_r\in\R^{r\times r}$ is positive definite, so each block update reduces to an $r\times r$ linear solve. The $\mY_j$-update carries no consensus constraint; its normal equation is $\mY_j^k(2\mX_j^{k\top}\mX_j^k+\beta_j^k\eye_r)=2\mZ_j^{\top}\mX_j^k+\beta_j^k\mY_j^{k-1}$.

\paragraph{Results.}
We generate a noiseless rank-$r$ instance $\mZ=\mA\mB^\top$ with $\mA\in\R^{n\times r}$ and $\mB\in\R^{m\times r}$ having independent standard normal entries, and partition $\mZ$ into $J$ blocks. The primal variables $\{\mX_j^0\}$, $\{\mY_j^0\}$ and the multipliers are initialized randomly. In \Cref{fig:BADMM}, we use a larger instance with $n=50$, $m_j=50$ for all $j$, $r=4$, $J=100$, and $(\eta,\rho)=(1,1000)$.\footnote{The guarantee of \Cref{thm:cadmm} applies when $\eta$ satisfies the corresponding relative-bi-smoothness stepsize bound. For the kernel above, one may take $L_j^x=L_j^y=4$, so $\eta<1/4$ is sufficient. We use the larger practical stepsize $\eta=1$ for speed; the normal equations above show that the block subproblems remain strongly convex because $\gamma_j^k>0$ and $\beta_j^k>0$.} The objective decreases rapidly to approximately $6.43\times 10^{-25}$ and the consensus residual to approximately $7.31\times 10^{-28}$ after $600$ iterations, showing that the algorithm both fits the data and enforces agreement among the distributed factors at machine precision.

% \paragraph{Strict-saddle avoidance.}
% To directly illustrate the theoretical prediction, we run $50$ independent trials on a smaller instance ($n=10$, $m_j=10$, $r=2$, $J=5$) with the same algorithm and independent random initializations drawn from a standard Gaussian distribution. In every trial, the algorithm converges to a point $(\mX^\star,\{\mY_j^\star\})$ satisfying $\sum_j\|\mX^\star\mY_j^{\star\top}-\mZ_j\|_F^2<10^{-20}$---i.e., a global minimizer---consistent with the prediction that strict saddles are avoided almost surely from random initialization. No trial converges to a saddle point or a spurious local minimum.

\begin{figure}[htp!]
\centering
\includegraphics[width=.6\textwidth]{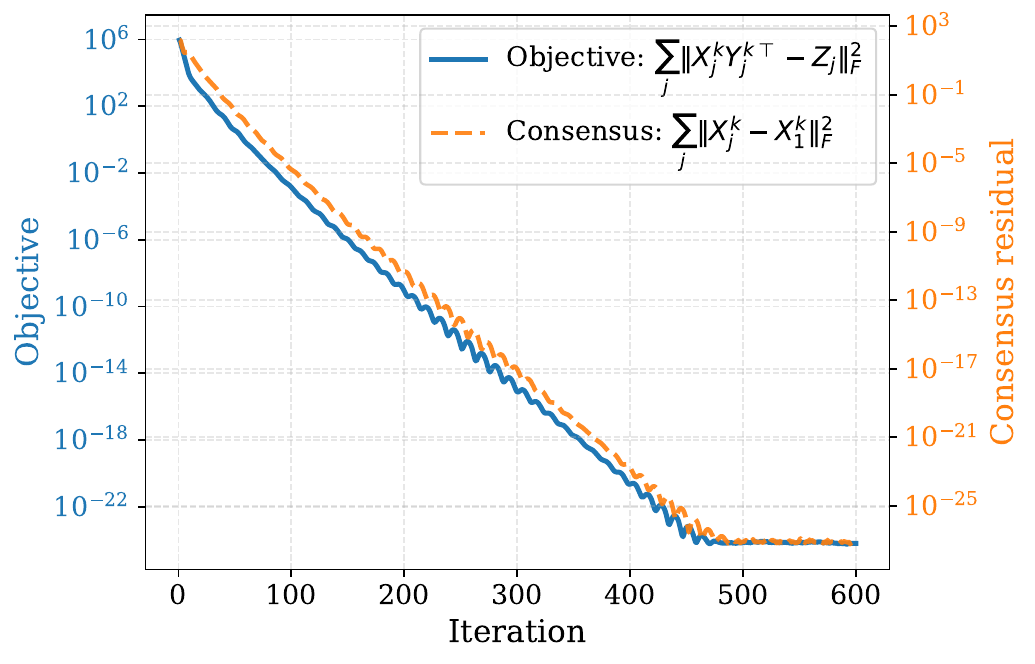}
\caption{Consensus Bregman ADMM on distributed matrix factorization.}
\label{fig:BADMM}
\end{figure}

\subsection{Symmetric tensor factorization}
As a second application, we consider the symmetric tensor factorization problem
\begin{align}
\minimize_{\mU\in\R^{n\times r}}\|\calT - \sum_{i=1}^r \vu_i\otimes \vu_i\otimes \vu_i\|_F^2,
\label{tensordec}
\end{align}
where $\mU = [\vu_1,\cdots,\vu_r]$, $\vu\otimes \vu\otimes \vu$ denotes the symmetric rank-one tensor with $(i,j,k)$-th entry~$u_iu_ju_k$, and $\calT\in\R^{n\times n\times n}$ is a given symmetric tensor. Symmetric tensor decomposition arises widely in signal processing and statistical estimation, including method of moments estimators~\cite{kolda2009tensor,comon2008symmetric,anandkumar2014tensor,li2022local}.

Problem~\eqref{tensordec} is structurally more coupled than a standard CP model because the same factor matrix $\mU$ enters all three modes. Consequently, the usual mode-wise alternating least-squares strategy for nonsymmetric CP, whose block updates reduce to linear least-squares problems, is not directly available in the symmetric formulation~\cite{kolda2009tensor}. A common remedy is to introduce separate mode factors and enforce symmetry through exact coupling constraints, or to encourage symmetry through penalties in related penalized formulations. The exact constraint formulation restores blockwise multilinearity while preserving equivalence with the original symmetric model. Related formulations appear in numerical optimization approaches to symmetric CP decomposition and, more broadly, in structured/coupled factorization frameworks~\cite{kolda2015numerical,sorber2015structured}. Motivated by this observation, we introduce auxiliary factors $\mV,\mW$ and impose symmetry via consensus constraints, thereby rewriting~\eqref{tensordec} as
\begin{align}
\minimize_{\mU,\mV,\mW\in\R^{n\times r}}\|\calT - \sum_{i=1}^r \vu_i\otimes \vv_i\otimes \vw_i\|_F^2~\st~\mU=\mV=\mW,
\label{tensordec:dist}
\end{align}
where $\mV = [\vv_1,\cdots,\vv_r]$, $\mW = [\vw_1,\cdots,\vw_r]$, and $\vu\otimes \vv\otimes \vw$ is a rank-one tensor with $(i,j,k)$-th entry~$u_iv_jw_k$.
Motivated by the consensus update in \Cref{alg:cadmm}, we apply the same Bregman proximal update pattern to~\eqref{tensordec:dist}, treating $(\mU,\mV,\mW)$ as three consensus coupled copies of the factor.

\begin{remark}[Scope of the theoretical guarantee]
\label{rem:tensor:scope}
The consensus model \eqref{eqn:problemC} assumes a separable objective of the form $\sum_j f_j(\vx_j,\vy_j)$, with each term depending only on agent~$j$'s own variables. The symmetric tensor splitting~\eqref{tensordec:dist} does not have this structure, because the single loss $\|\calT-\sum_i \vu_i\otimes\vv_i\otimes\vw_i\|_F^2$ still couples the three factor copies. For this reason, \Cref{thm:cadmm} does not apply to~\eqref{tensordec:dist} as stated. Even so, the Bregman-proximal subproblems remain strongly convex, as the normal equations below show, so the algorithm is perfectly well defined. We include this example primarily as an \emph{algorithmic illustration}: it demonstrates the practical value of the Bregman regularization in stabilizing subproblems for polynomial objectives beyond the Lipschitz-smooth regime. We note that the key obstacle to extending \Cref{thm:cadmm} to the non-separable case is that the fixed-point Jacobian acquires nonzero off-diagonal blocks between agents (through the coupled loss), so the block-sparsity exploited in \Cref{sec:thm:cadmm} no longer holds. A natural conjecture is that the same instability conclusion persists whenever the coupling is ``sufficiently sparse'' relative to the consensus graph---formalizing this is an open direction discussed in the conclusion.
\end{remark}

\paragraph{Kernel choice and block subproblems.}
With $\mU,\mV,\mW\in\R^{n\times r}$, we use the three-factor product kernel
\begin{equation}
h(\mU,\mV,\mW)\doteq \Big(\tfrac{1}{2}\|\mU\|_F^2+1\Big)\Big(\tfrac{1}{2}\|\mV\|_F^2+1\Big)\Big(\tfrac{1}{2}\|\mW\|_F^2+1\Big).
\end{equation}
Fixing two factors and applying the product-kernel identity from \Cref{sec:prelim} to the third gives a scaled Frobenius proximal term. (The identity in \Cref{sec:prelim} is stated for two-factor product kernels; for the three-factor case one simply treats the product of the two fixed factors as a single scalar, reducing to the same formula.)  For the $\mU$-variable:
\begin{equation}
D^{\mU}_{h}(\mU,\mU';\mV',\mW')=\tfrac{1}{2}\Big(\tfrac{1}{2}\|\mV'\|_F^2+1\Big)\Big(\tfrac{1}{2}\|\mW'\|_F^2+1\Big)\|\mU-\mU'\|_F^2,
\end{equation}
and analogously for $\mV$ and $\mW$ by cyclic permutation of the three factors.
In \Cref{alg:cadmm}, $\frac{1}{\eta}D^{\mU}_{h}(\mU,\mU^{k-1};\mV^{k-1},\mW^{k-1})$ contributes a proximal regularizer $\frac{p_{\mU}^k}{2}\|\mU-\mU^{k-1}\|_F^2$ with coefficient
\[
p_{\mU}^{k}\doteq\frac{1}{\eta}\Big(\tfrac{1}{2}\|\mV^{k-1}\|_F^2+1\Big)\Big(\tfrac{1}{2}\|\mW^{k-1}\|_F^2+1\Big),
\]
and similarly $p_{\mV}^k,p_{\mW}^k$ by cyclic permutation.
The loss can be written via the mode-1 matricization $\calT_{(1)}\in\R^{n\times n^2}$ (the matrix whose rows index the first mode and whose columns enumerate all index pairs of the remaining two modes) as $\|\calT_{(1)}-\mU\mQ^{\top}\|_F^2$, where $\mQ\doteq\mW\kr\mV\in\R^{n^2\times r}$ is the Khatri--Rao (column-wise Kronecker) product~\cite{liu2008hadamard}.
Since $\mU$ is the hub node with $J{-}1=2$ consensus constraints, setting the gradient of the $\mU$-subproblem to zero gives the $r\times r$ normal equation
\[
\mU^k\big(2\mQ^{k-1\top}\mQ^{k-1}+\gamma_{\mU}^k\eye_r\big)=2\calT_{(1)}\mQ^{k-1}+\vlambda_{\mV}^{k-1}+\vlambda_{\mW}^{k-1}+\rho(\mV^{k-1}+\mW^{k-1})+p_{\mU}^k\mU^{k-1},
\]
where $\mQ^{k-1}\doteq\mW^{k-1}\kr\mV^{k-1}$ and $\gamma_{\mU}^k\doteq 2\rho+p_{\mU}^k>0$.  The $\mV$- and $\mW$-updates (non-hub agents) have the same structure with mode-2 and mode-3 matricizations $\calT_{(2)},\calT_{(3)}$ and the corresponding Khatri--Rao products $\mW^{k-1}\kr\mU^k$ and $\mV^k\kr\mU^k$.

\paragraph{Results.}
We form $\calT=\sum_{i=1}^r \vu_i^\star\otimes\vu_i^\star\otimes\vu_i^\star$ from a random $\mU^\star\in\R^{n\times r}$ and initialize $(\mU^0,\mV^0,\mW^0)$ as Gaussian perturbations of~$\mU^\star$ with noise level $0.01$ and zero multipliers. In \Cref{fig:BtensorSymADMM}, we use a larger instance with $n=64$, $r=8$, and $(\eta,\rho)=(8\times 10^3,100)$. The reconstruction error decreases to approximately $2.55\times 10^{-17}$ and the symmetry residual $\|\mU-\mV\|_F^2+\|\mU-\mW\|_F^2$ to approximately $3.18\times 10^{-22}$ after $1000000$ iterations. In other words, the split formulation does what it is supposed to do: it drives the factors toward agreement while continuing to reduce the tensor-fitting objective. The run is slower than in the matrix-factorization example, which is unsurprising here because the admissible Bregman regularization is much stronger and the underlying polynomial has higher degree.

% \paragraph{Convergence behavior.}
% As noted in \Cref{rem:tensor:scope}, \Cref{thm:cadmm} does not formally apply to~\eqref{tensordec:dist}. Nevertheless, the first-order convergence conditions (sufficient decrease, boundedness, and the K\L{} inequality) can still be verified for this problem: the objective is a real polynomial and hence satisfies the K\L{} inequality; the Bregman-proximal updates ensure sufficient decrease by construction; and boundedness of iterates is observed empirically (and can be argued via the coercive augmented Lagrangian structure when $\rho$ is large enough). Extending the strict-saddle avoidance guarantee to this non-separable setting remains an open problem, but the numerical results suggest that the qualitative behavior---avoidance of saddle points and convergence to a global minimizer---persists in practice.

\begin{figure}[htp!]
\centering
\includegraphics[width=.6\textwidth]{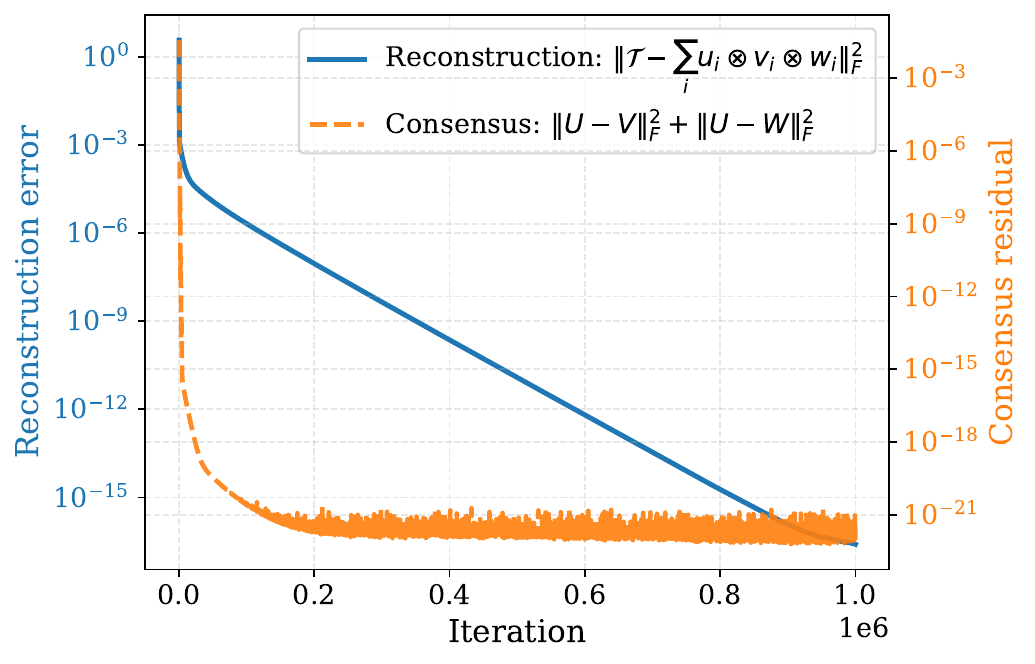}
\caption{Consensus Bregman ADMM on symmetric tensor factorization.}
\label{fig:BtensorSymADMM}
\end{figure}

\section{Conclusion}
The main point of the paper is that the strict-saddle-avoidance mechanism familiar from smooth first-order methods survives in the Bregman ADMM setting, even when global Lipschitz smoothness is unavailable. What is nontrivial is not the dynamical-systems principle itself, but the verification that it continues to apply after replacing Euclidean proximal terms by Bregman regularization. In the two block case this requires a determinant reduction and symmetrization and scaling step specific to the Bregman setting in the spectral argument, while in the consensus case it relies on the special hub and peripheral structure of the star graph and the associated null space cancellation of the consensus penalty. Once these ingredients are in place, the iteration can again be viewed as a smooth primal--dual map on an invariant open domain, so strict saddle KKT points correspond to unstable fixed points and random initialization rules out convergence to them almost surely. Combined with standard first-order convergence results, this gives second-order stationarity of limiting KKT points.

Several natural questions remain. In the two-block setting, our analysis assumes a separable objective $f(\vx,\vy)=f_1(\vx)+f_2(\vy)$, allowing nonzero cross-Hessian terms $\nabla^2_{\vx\vy}f$ would widen the scope of the theory. In the consensus setting, the current result still relies on sum-separability across agents, whereas the symmetric tensor splitting in \Cref{rem:tensor:scope} suggests that interesting non-separable consensus formulations should also be accessible. The key technical barrier is that non-separability introduces off-diagonal Jacobian blocks between agents, breaking the star-shaped block sparsity used in \Cref{sec:thm:cadmm}. A promising approach is to treat these inter-agent couplings as perturbations of the separable Jacobian and show that the negative-curvature direction still dominates for small enough coupling, we leave a rigorous treatment to future work. It would also be worthwhile to understand whether the same dynamical picture persists for stochastic or mini-batch variants of Bregman ADMM.

\section*{Acknowledgment}
This work was supported in part by NSF grants ECCS-2409701 and ECCS-2409702.

\bibliographystyle{unsrt}
\bibliography{nonconvex}

\newpage
\appendix
\setlength{\emergencystretch}{3em}% slightly more generous for dense appendix proofs

\begin{center}
\huge\bfseries \textsc{Appendix}	
\end{center}
\section{Proof of \texorpdfstring{\Cref{thm:badmm}}{Theorem~\ref{thm:badmm}}}
\label{pf:thm:badmm}

\begin{proof}[Proof of \Cref{thm:badmm}]

We verify the two conditions of \Cref{thm:jason}: $\det(Dg)\neq 0$ everywhere, and every strict-saddle KKT point in $\Omega$ is an unstable fixed point of $g$.

One iteration of \Cref{alg:badmm} decomposes into three elementary maps $g_1,g_2,g_3$ (defined below). By \Cref{ass:domain}, the full update, including the intermediate elementary updates $g_1,g_2,g_3$, is well defined on the relevant open state space domains. The relative smoothness and stepsize assumptions make the Bregman proximal Hessians positive definite, so any minimizer is unique and the implicit function theorem applies to the optimality conditions. The nonsingularity of $Dg$ is verified below. For instability, the Bregman proximal term contributes a positive definite shift that cannot cancel the negative curvature at a strict saddle, so the linearized map has an eigenvalue of magnitude greater than one.

\paragraph{Constructing the fixed-point map $g$.}
Represent \eqref{eqn:badmm} as
\begin{equation}
\begin{aligned}
(\vx^+,\vy,\vlambda)&=g_{1}(\vx,\vy,\vlambda),\\
(\vx,\vy^{+},\vlambda)&=g_{2}(\vx,\vy,\vlambda),\\
(\vx,\vy,\vlambda^{+})&=g_{3}(\vx,\vy,\vlambda).
\label{def:g123:badmm}
\end{aligned}
\end{equation}
The elementary updates are well defined on the corresponding open domains by \Cref{ass:domain}; the positive definiteness of the Bregman proximal Hessians gives local uniqueness and smooth dependence of the primal solution maps.
Then \Cref{alg:badmm} iterates the composite map
\begin{align}
(\vx^{+},\vy^{+},\vlambda^{+})=g(\vx,\vy,\vlambda),
\label{eqn:g:badmm}
\end{align}
with
\begin{align}
g\doteq g_{3}\circ g_{2}\circ g_{1}.
\label{def:g:badmm}
\end{align}

\paragraph{Computing the Jacobian matrix $Dg$ of the fixed-point map for \Cref{alg:badmm}.}
By the chain rule, we have
\[
Dg=Dg_{3}Dg_{2}Dg_{1}.
\]

\begin{itemize}[leftmargin=*]
\item{\bf Computing $Dg_{3}$. }
Since $(\vx,\vy,\vlambda^{+})=g_{3}(\vx,\vy,\vlambda)$ with $g_{3}$ defined by the optimality condition of the third-block of \eqref{eqn:badmm}
\begin{align}
\vlambda^{+}= \vlambda+\rho(\mA\vx+\mB\vy-\vb),
\label{eqn:opt:block3:badmm}
\end{align}
then we get
\begin{align}
Dg_{3}(\vx,\vy,\vlambda)=
\begin{bmatrix}
\eye_{n} &&\\
& \eye_{m} &\\
\rho \mA & \rho \mB & \eye_{p}
\end{bmatrix}.
\label{Dg3:badmm}
\end{align}
\item{\bf Computing $Dg_{2}$. }
Since $(\vx,\vy^{+},\vlambda)=g_{2}(\vx,\vy,\vlambda)$, then we get
\begin{align*}
Dg_{2}(\vx,\vy,\vlambda)=
\begin{bmatrix}
\eye_{n} &&\\
\nabla_{\vx} \vy^{+}& \nabla_{\vy}\vy^{+} & \nabla_{\vlambda}\vy^{+}\\
& & \eye_{p}
\end{bmatrix},
\end{align*}
Before invoking the implicit function theorem, we note that relative smoothness means
$L_i\nabla^2 h_i(\vz)\pm \nabla^2 f_i(\vz)\succeq 0$ for $i=1,2$, hence
$\nabla^2 f_i(\vz)\succeq -L_i\nabla^2 h_i(\vz)$.
Since each $h_i$ is strongly convex, $\nabla^2 h_i(\vz)\succ 0$ on the domain. Hence for $\eta<1/L_i$,
\[
\nabla^2 f_i(\vz)+\frac{1}{\eta}\nabla^2 h_i(\vz)\succeq \Bigl(\frac{1}{\eta}-L_i\Bigr)\nabla^2 h_i(\vz)\succ 0, ~ i=1,2.
\]
Adding the positive semidefinite quadratic terms from the augmented Lagrangian (e.g., $\rho\mA^\top\mA$ or $\rho\mB^\top\mB$) preserves positive definiteness. Therefore each primal subproblem in \eqref{eqn:badmm} is strongly convex with a unique minimizer, and the implicit function theorem applies to the optimality conditions.
Specifically, $\nabla_{\vx} \vy^{+}, \nabla_{\vy}\vy^{+}, \nabla_{\vlambda}\vy^{+}$ are obtained by applying the implicit function theorem to the optimality condition of the second block of \eqref{eqn:badmm}
\begin{align}
\nabla f_2(\vy^{+})+\mB^{\top}\vlambda+\rho\mB^{\top}(\mA\vx+\mB\vy^{+}-\vb)+\frac{\nabla h_{2}(\vy^{+})-\nabla h_{2}(\vy)}{\eta}=\vzero.
   \label{eqn:opt:block2:badmm}
\end{align}
Applying the implicit function theorem to \eqref{eqn:opt:block2:badmm} gives
\begin{align*}
&\nabla^{2}f_2(\vy^{+})\nabla_{\vx}\vy^{+}+\rho\mB^{\top}\mA+\rho\mB^{\top}\mB\nabla_{\vx}\vy^{+}+\frac{1}{\eta}\nabla^{2}h_{2}(\vy^{+})\nabla_{\vx}\vy^{+}=\vzero
\\
\Longrightarrow & \nabla_{\vx}\vy^{+}=-\left(\nabla^{2}f_2(\vy^{+})+\rho\mB^{\top}\mB+\frac{1}{\eta}\nabla^{2}h_{2}(\vy^{+})\right)^{-1}\left(\rho\mB^{\top}\mA\right);
\\
&\nabla^{2}f_2(\vy^{+})\nabla_{\vy}\vy^{+}+\rho\mB^{\top}\mB\nabla_{\vy}\vy^{+}+\frac{1}{\eta} \nabla^{2} h_{2}(\vy^{+})\nabla_{\vy}\vy^{+}-\frac{1}{\eta}\nabla^{2} h_{2}(\vy)=\vzero
\\
\Longrightarrow & \nabla_{\vy}\vy^{+}=\left(\nabla^{2}f_2(\vy^{+})+\rho\mB^{\top}\mB+\frac{1}{\eta}\nabla^{2}h_{2}(\vy^{+})\right)^{-1}\left(\frac{1}{\eta} \nabla^{2} h_{2}(\vy)\right);
\\
& \nabla^{2}f_2(\vy^{+}) \nabla_{\vlambda}\vy^{+}+\mB^{\top} +\rho\mB^{\top}\mB\nabla_{\vlambda}\vy^{+}+ \frac{1}{\eta}\nabla^{2} h_{2}(\vy^{+}) \nabla_{\vlambda} \vy^{+}=\vzero
\\
\Longrightarrow &  \nabla_{\vlambda} \vy^{+}=-\left(\nabla^{2}f_2(\vy^{+})+\rho\mB^{\top}\mB+\frac{1}{\eta}\nabla^{2}h_{2}(\vy^{+})\right)^{-1}\mB^{\top}.
\end{align*}

Therefore,
\begin{align}
&Dg_{2}(\vx,\vy,\vlambda)\nonumber
\\
&=
\begin{bmatrix}
\eye_{n} &&\\
\nabla_{\vx} \vy^{+}& \nabla_{\vy}\vy^{+} & \nabla_{\vlambda}\vy^{+}\\
&& \eye_{p}
\end{bmatrix}
\nonumber
\\
&=
\begin{bmatrix}
\eye_{n} &&\\
&
\begin{bmatrix}\nabla^{2}f_2(\vy^{+})+\rho\mB^{\top}\mB+\frac{1}{\eta}\nabla^{2}h_{2}(\vy^{+})\end{bmatrix}^{-1}&
\\
&& \eye_{p}
\end{bmatrix}
\begin{bmatrix}
\eye_{n} &&\\
-\rho\mB^{\top}\mA
&
\frac{1}{\eta} \nabla^{2} h_{2}(\vy)
&
-\mB^{\top}
\\
&& \eye_{p}
\end{bmatrix}\nonumber
\\
&=
\begin{bmatrix}
\eye_{n} &&\\
&
\begin{bmatrix}\nabla^{2}f_2(\vy^{+})+\rho\mB^{\top}\mB+\frac{1}{\eta}\nabla^{2}h_{2}(\vy^{+})\end{bmatrix}^{-1}
&
\\
&& \eye_{p}
\end{bmatrix}
\begin{bmatrix}
\eye_{n} &&\\
-\rho\mB^{\top}\mA
&
\eye_{m}
&
-\mB^{\top}
\\
&& \eye_{p}
\end{bmatrix}
\begin{bmatrix}
\eye_{n}&&\\
&\frac{1}{\eta} \nabla^{2} h_{2}(\vy)&\\
&&\eye_{p}
\end{bmatrix}
\label{Dg2:badmm}
\end{align}
where $\nabla^{2}f_2(\vy^{+})+\rho\mB^{\top}\mB+\frac{1}{\eta}\nabla^{2}h_{2}(\vy^{+})$ is positive definite by relative smoothness, strong convexity of $h_2$, and $\eta<1/L_2$ (and adding the positive semidefinite term $\rho\mB^\top\mB$ preserves positive definiteness).

\item{\bf Computing $Dg_{1}$. }
Since $(\vx^{+},\vy,\vlambda)=g_{1}(\vx,\vy,\vlambda)$, then we get
\begin{align*}
Dg_{1}(\vx,\vy,\vlambda)=
\begin{bmatrix}
\nabla_{\vx} \vx^{+}& \nabla_{\vy}\vx^{+} & \nabla_{\vlambda}\vx^{+}\\
&\eye_{m} & \\
& & \eye_{p}
\end{bmatrix},
\end{align*}
where $\nabla_{\vx} \vx^{+}, \nabla_{\vy}\vx^{+}, \nabla_{\vlambda}\vx^{+}$ can be obtained by applying the implicit function theorem to the optimality condition of the first block of \eqref{eqn:badmm}
\begin{align}
\nabla f_1(\vx^{+})+\mA^{\top}\vlambda+\rho\mA^{\top}(\mA\vx^{+}+\mB\vy-\vb)+\frac{\nabla h_{1}(\vx^{+})-\nabla h_{1}(\vx)}{\eta}=\vzero.
   \label{eqn:opt:block1:badmm}
\end{align}
Applying the implicit function theorem to \eqref{eqn:opt:block1:badmm} gives
\begin{align*}
&\nabla^{2}f_1(\vx^{+})\nabla_{\vx}\vx^{+}+\rho\mA^{\top}\mA\nabla_{\vx}\vx^{+}+\frac{1}{\eta} \nabla^{2} h_{1}(\vx^{+})\nabla_{\vx}\vx^{+}-\frac{1}{\eta}\nabla^{2} h_{1}(\vx)=\vzero
\\
\Longrightarrow & \nabla_{\vx}\vx^{+}=\left(\nabla^{2}f_1(\vx^{+})+\rho\mA^{\top}\mA+\frac{1}{\eta}\nabla^{2}h_{1}(\vx^{+})\right)^{-1}\left(\frac{1}{\eta} \nabla^{2} h_{1}(\vx)\right);
\\
&\nabla^{2}f_1(\vx^{+})\nabla_{\vy}\vx^{+}+\rho\mA^{\top}\mB+\rho\mA^{\top}\mA\nabla_{\vy}\vx^{+}+\frac{1}{\eta}\nabla^{2}h_{1}(\vx^{+})\nabla_{\vy}\vx^{+}=\vzero
\\
\Longrightarrow & \nabla_{\vy}\vx^{+}=-\left(\nabla^{2}f_1(\vx^{+})+\rho\mA^{\top}\mA+\frac{1}{\eta}\nabla^{2}h_{1}(\vx^{+})\right)^{-1}\left(\rho\mA^{\top}\mB\right);
\\
& \nabla^{2}f_1(\vx^{+}) \nabla_{\vlambda}\vx^{+}+\mA^{\top} +\rho\mA^{\top}\mA\nabla_{\vlambda}\vx^{+}+  \frac{1}{\eta} \nabla^{2} h_{1}(\vx^{+}) \nabla_{\vlambda} \vx^{+}=\vzero
\\
\Longrightarrow &  \nabla_{\vlambda} \vx^{+}=-\left(\nabla^{2}f_1(\vx^{+})+\rho\mA^{\top}\mA+\frac{1}{\eta}\nabla^{2}h_{1}(\vx^{+})\right)^{-1}\mA^{\top}.
\end{align*}
Therefore,
\begin{align}
Dg_{1}(\vx,\vy,\vlambda)
=&
\begin{bmatrix}
\nabla_{\vx} \vx^{+}& \nabla_{\vy}\vx^{+} & \nabla_{\vlambda}\vx^{+}\\
& \eye_{m} & \\
&& \eye_{p}
\end{bmatrix}
\nonumber
\\
=&
\begin{bmatrix}
\left(\nabla^{2}f_1(\vx^{+})+\rho\mA^{\top}\mA+\frac{1}{\eta}\nabla^{2}h_{1}(\vx^{+})\right)^{-1}
&&
\\
& \eye_{m}  &
\\
&& \eye_{p}
\end{bmatrix}
\begin{bmatrix}
\frac{1}{\eta} \nabla^{2} h_{1}(\vx)
&
-\rho\mA^{\top}\mB
&
-\mA^{\top}
\\
& \eye_{m} &
\\
&& \eye_{p}
\end{bmatrix}
\label{Dg1:badmm}
\end{align}
where $\nabla^{2}f_1(\vx^{+})+\rho\mA^\top\mA+\frac{1}{\eta}\nabla^{2}h_{1}(\vx^{+})$ is positive definite by relative smoothness, strong convexity of $h_1$, and $\eta<1/L_1$.

\end{itemize}

Finally, by the chain rule, we get
\begin{align}
Dg(\vx,\vy,\vlambda)=Dg_{3}(g_{2}(\vx,\vy,\vlambda))Dg_{2}(g_{1}(\vx,\vy,\vlambda))Dg_{1}(\vx,\vy,\vlambda).
\label{def:Dg:badmm}
\end{align}
\paragraph{Showing that $\det(Dg)$ is nonzero globally.}
Because
\begin{align*}
Dg(\vx,\vy,\vlambda)=Dg_{3}(g_{2}(\vx,\vy,\vlambda))Dg_{2}(g_{1}(\vx,\vy,\vlambda))Dg_{1}(\vx,\vy,\vlambda),
\end{align*}
with $Dg_{1}$, $Dg_{2}$ and $Dg_{3}$ being square matrices, it suffices to show the global nonsingularity of all $Dg_{1}$, $Dg_{2}$ and $Dg_{3}$.

We begin with $Dg_{1}$. Fix $(\vx,\vy,\vlambda)\in\Omega$ and suppose $(\valpha,\vbeta,\vgamma)\in\R^{n}\times\R^{m}\times\R^{p}$ satisfies
$Dg_{1}(\vx,\vy,\vlambda)[\valpha^{\top}~\vbeta^{\top}~\vgamma^{\top}]^{\top}=\vzero$. We show $\valpha=\vbeta=\vgamma=\vzero$. Observe that
\begin{align*}
Dg_{1}(\vx,\vy,\vlambda)
\begin{bmatrix}
\valpha \\ \vbeta \\ \vgamma
\end{bmatrix}=\vzero
\iff &
\begin{bmatrix}
\nabla_{\vx} \vx^{+}& \nabla_{\vy}\vx^{+} & \nabla_{\vlambda}\vx^{+}\\
& \eye_{m} & \\
&& \eye_{p}
\end{bmatrix}
\begin{bmatrix}
\valpha \\ \vbeta \\ \vgamma
\end{bmatrix}
=
\begin{bmatrix}
\vzero \\ \vzero \\ \vzero
\end{bmatrix}
\implies 
\vbeta=\vzero, \vgamma=\vzero,\nabla_{\vx} \vx^{+}\,\valpha=\vzero
\implies  \valpha=\vzero
\end{align*}
where the last step uses the nonsingularity, for every state in $\Omega$, of
\[\nabla_{\vx}\vx^{+}=\left(\nabla^{2}f_1(\vx^{+})+\rho\mA^{\top}\mA+\frac{1}{\eta}\nabla^{2}h_{1}(\vx^{+})\right)^{-1}\left(\frac{1}{\eta} \nabla^{2} h_{1}(\vx)\right),\]
which follows from relative smoothness: the first factor is nonsingular by the positive definiteness established above, and $\frac{1}{\eta}\nabla^2 h_1(\vx)\succ 0$ by strong convexity of $h_1$, so the product is nonsingular. (Rows 2--3 give $\vbeta=\vzero$ and $\vgamma=\vzero$; substituting into row 1 yields $\nabla_\vx\vx^+\,\valpha=\vzero$, hence $\valpha=\vzero$.)

Next consider $Dg_{2}$. Fix $(\vx,\vy,\vlambda)\in\Omega$ and suppose $Dg_{2}(\vx,\vy,\vlambda)[\valpha^{\top}~\vbeta^{\top}~\vgamma^{\top}]^{\top}=\vzero$. We show again that $\valpha=\vbeta=\vgamma=\vzero$. Observe that
\begin{align*}
Dg_{2}(\vx,\vy,\vlambda)
\begin{bmatrix}
\valpha \\ \vbeta \\ \vgamma
\end{bmatrix}=\vzero
\iff &
\begin{bmatrix}
\eye_{n} && \\
\nabla_{\vx} \vy^{+}& \nabla_{\vy}\vy^{+} & \nabla_{\vlambda}\vy^{+}\\
&& \eye_{p}
\end{bmatrix}
\begin{bmatrix}
\valpha \\ \vbeta \\ \vgamma
\end{bmatrix}
=
\begin{bmatrix}
\vzero \\ \vzero \\ \vzero
\end{bmatrix}
\implies 
\valpha=\vzero, \vgamma=\vzero,\nabla_{\vy} \vy^{+}\,\vbeta=\vzero
\implies  \vbeta=\vzero
\end{align*}
where the last step uses the nonsingularity, for every state in $\Omega$, of
\[\nabla_{\vy}\vy^{+}=\begin{bmatrix}\nabla^{2}f_2(\vy^{+})+\rho\mB^{\top}\mB+\frac{1}{\eta}\nabla^{2}h_{2}(\vy^{+})\end{bmatrix}^{-1}\left(\frac{1}{\eta} \nabla^{2} h_{2}(\vy)\right),
\]
which follows from relative smoothness: the first factor is nonsingular by positive definiteness, and $\frac{1}{\eta}\nabla^2 h_2(\vy)\succ 0$ by strong convexity of $h_2$, so the product is nonsingular. (Rows 1 and 3 give $\valpha=\vzero$ and $\vgamma=\vzero$; row 2 then gives $\nabla_\vy\vy^+\,\vbeta=\vzero$, hence $\vbeta=\vzero$.)
Finally, for $Dg_{3}$, fix $(\vx,\vy,\vlambda)\in\Omega$ and suppose $Dg_{3}(\vx,\vy,\vlambda)[\valpha^{\top}~\vbeta^{\top}~\vgamma^{\top}]^{\top}=\vzero$. Then
\begin{align*}
Dg_{3}(\vx,\vy,\vlambda)
\begin{bmatrix}
\valpha \\ \vbeta \\ \vgamma
\end{bmatrix}
=\vzero
\iff &
\begin{bmatrix}
\eye_{n} && \\
&\eye_{m} & \\
\nabla_{\vx} \vlambda^{+}& \nabla_{\vy}\vlambda^{+} & \nabla_{\vlambda}\vlambda^{+}
\end{bmatrix}
\begin{bmatrix}
\valpha \\ \vbeta \\ \vgamma
\end{bmatrix}
=
\begin{bmatrix}
\vzero \\ \vzero \\ \vzero
\end{bmatrix}
\implies 
\begin{bmatrix}
 \valpha
\\
\vbeta
\\
\nabla_{\vlambda}\vlambda^{+} \vgamma
\end{bmatrix}
=
\begin{bmatrix}
\vzero \\ \vzero  \\ \vzero
\end{bmatrix}
\end{align*}
Finally, rows~1--2 give $\valpha=\vzero$ and $\vbeta=\vzero$, and the third row gives $\nabla_{\vlambda}\vlambda^{+}\vgamma=\vzero$; since $\nabla_{\vlambda}\vlambda^{+}=\eye_{p}$ by \eqref{eqn:opt:block3:badmm}, we obtain $\vgamma=\vzero$.

\paragraph{Showing any strict-saddle KKT point in $\Omega$ lies in the unstable set.}
Let $(\vx^{\star},\vy^{\star},\vlambda^{\star})\in\Omega$ be a strict-saddle KKT point of Problem \eqref{eqn:problem:2}. We first show that it is a fixed point of $g=g_{3}\circ g_{2}\circ g_{1}$, i.e.,
\[g(\vx^{\star},\vy^{\star},\vlambda^{\star})=(\vx^{\star},\vy^{\star},\vlambda^{\star}).\]

By the KKT conditions and strict-saddle condition for this point,
\begin{align}
&\begin{cases}
\mA\vx^{\star}+\mB\vy^{\star}&=\vb,\\
\nabla f_1(\vx^{\star})+ \mA^{\top}\vlambda^{\star}&=\vzero,\\
\nabla f_2(\vy^{\star})+ \mB^{\top}\vlambda^{\star}&=\vzero,
\end{cases}
\label{KKT:1:badmm}
\\
 &\vd_{x}^{\top}\nabla^2f_1(\vx^\star)\vd_x+\vd_y^\top\nabla^2 f_2(\vy^\star)\vd_y <0\quad\text{for some $(\vd_{x},\vd_{y}): \mA\vd_{x}+\mB\vd_{y}=\vzero$}.
\label{KKT:2:badmm}
\end{align}
Note that the Hessian $\nabla^2 F(\vz^\star)$ is block-diagonal because $f(\vx,\vy)=f_1(\vx)+f_2(\vy)$ is separable, so the strict-saddle condition involves no cross terms $\nabla^2_{\vx\vy}f$.

Second, $(\vx^{+},\vy^{+},\vlambda^{+})=(\vx^{\star},\vy^{\star},\vlambda^{\star})$ and $(\vx,\vy,\vlambda)=(\vx^{\star},\vy^{\star},\vlambda^{\star})$ satisfy the optimality conditions \eqref{eqn:opt:block1:badmm}, \eqref{eqn:opt:block2:badmm} and \eqref{eqn:opt:block3:badmm} (which define the mappings $g_{1},g_{2},g_{3}$, respectively). Therefore $g_j(\vx^{\star},\vy^{\star},\vlambda^{\star})=(\vx^{\star},\vy^{\star},\vlambda^{\star})$ for $j=1,2,3$, which follows from the well-posedness (unique minimizers) of the subproblems on the primal domain.

It remains to show that the Jacobian matrix $Dg(\vx^{\star},\vy^{\star},\vlambda^{\star})$ has an eigenvalue with magnitude greater than 1.  To simplify notation, we denote
\[\mH_{1}=\nabla^{2}h_{1}(\vx^{\star}),\quad\mH_{2}=\nabla^{2}h_{2}(\vy^{\star}),\quad \mF_1=\nabla^{2}f_1(\vx^{\star}),\quad \mF_2=\nabla^{2}f_2(\vy^{\star}).
\]

We compute the Jacobian matrix $Dg(\vx^{\star},\vy^{\star},\vlambda^{\star})$ by plugging 
\[(\vx,\vy,\vlambda)=(\vx^{+},\vy^{+},\vlambda^{+})=(\vx^{\star},\vy^{\star},\vlambda^{\star})\]
into \eqref{def:Dg:badmm}:
\begin{align*}
&Dg(\vx^{\star},\vy^{\star},\vlambda^{\star})
\\
=&Dg_{3}(\vx^{\star},\vy^{\star},\vlambda^{\star})Dg_{2}(\vx^{\star},\vy^{\star},\vlambda^{\star})Dg_{1}(\vx^{\star},\vy^{\star},\vlambda^{\star})
\\
=&
\begin{bmatrix}
\eye_{n} &&\\
& \eye_{m} &\\
\rho \mA & \rho \mB & \eye_{p}
\end{bmatrix}
\begin{bmatrix}
\eye_{n} &&\\
&(\mF_{2}+\rho\mB^\top\mB+\frac{1}{\eta}\mH_{2})^{-1}&\\
&& \eye_{p}
\end{bmatrix}
\begin{bmatrix}
\eye_{n} &&\\
-\rho\mB^{\top}\mA&\eye_{m}&
-\mB^{\top}\\
&& \eye_{p}
\end{bmatrix}
\begin{bmatrix}
\eye_{n}&&\\
&\frac{1}{\eta}\mH_{2} &\\
&&\eye_{p}
\end{bmatrix}
\\
&
\begin{bmatrix}
(\mF_{1}+\rho\mA^\top\mA+\frac{1}{\eta}\mH_{1})^{-1}&&\\
& \eye_{m}  &\\
&& \eye_{p}
\end{bmatrix}
\begin{bmatrix}
\frac{1}{\eta} \mH_{1}&-\rho\mA^{\top}\mB&-\mA^{\top}\\
& \eye_{m} &\\
&& \eye_{p}
\end{bmatrix}
\\
=&
\begin{bmatrix}
\eye_{n} &&\\
& \eye_{m} &\\
\rho \mA & \rho \mB & \eye_{p}
\end{bmatrix}
\begin{bmatrix}
\eye_{n} &&\\
&(\mF_{2}+\rho\mB^\top\mB+\frac{1}{\eta}\mH_{2})^{-1}&\\
&& \eye_{p}
\end{bmatrix}
\begin{bmatrix}
\eye_{n} &&\\
-\rho\mB^{\top}\mA&\eye_{m}&
-\mB^{\top}\\
&& \eye_{p}
\end{bmatrix}
\\
&
\begin{bmatrix}
(\mF_{1}+\rho\mA^\top\mA+\frac{1}{\eta}\mH_{1})^{-1}&&\\
& \eye_{m}  &\\
&& \eye_{p}
\end{bmatrix}
\begin{bmatrix}
\eye_{n}&&\\
&\frac{1}{\eta}\mH_{2} &\\
&&\eye_{p}
\end{bmatrix}
\begin{bmatrix}
\frac{1}{\eta} \mH_{1}&-\rho\mA^{\top}\mB&-\mA^{\top}\\
& \eye_{m} &\\
&& \eye_{p}
\end{bmatrix}
\\
=&
\begin{bmatrix}
\eye_{n} &&\\
& \eye_{m} &\\
\rho \mA & \rho \mB & \eye_{p}
\end{bmatrix}
\begin{bmatrix}
\eye_{n} &&\\
&(\mF_{2}+\rho\mB^\top\mB+\frac{1}{\eta}\mH_{2})^{-1}&\\
&& \eye_{p}
\end{bmatrix}
\begin{bmatrix}
\eye_{n} &&\\
-\rho\mB^{\top}\mA&\eye_{m}&
-\mB^{\top}\\
&& \eye_{p}
\end{bmatrix}
\\
&
\begin{bmatrix}
(\mF_{1}+\rho\mA^\top\mA+\frac{1}{\eta}\mH_{1})^{-1}&&\\
& \eye_{m}  &\\
&& \eye_{p}
\end{bmatrix}
\begin{bmatrix}
\frac{1}{\eta} \mH_{1}&-\rho\mA^{\top}\mB&-\mA^{\top}\\
&\frac{1}{\eta}\mH_{2}  &\\
&& \eye_{p}
\end{bmatrix}
\\
=&\begin{bmatrix}
\mF_{1}+\frac{1}{\eta}\mH_{1} +\rho\mA^{\top}\mA&&\\
&\mF_{2}+\frac{1}{\eta}\mH_{2}&\mB^{\top}\\
-\rho\mA & -\rho \mB & \eye_{p}
\end{bmatrix}^{-1}
\begin{bmatrix}
\frac{1}{\eta}\mH_{1} & -\rho\mA^{\top}\mB& -\mA^{\top}
\\
& \frac{1}{\eta}\mH_{2} &
\\
&&\eye_{p}
\end{bmatrix}
\\
=&
\eye-
\begin{bmatrix}
\mF_{1}+\frac{1}{\eta}\mH_{1} +\rho\mA^{\top}\mA&&\\
&\mF_{2}+\frac{1}{\eta}\mH_{2}&\mB^{\top}\\
-\rho\mA & -\rho \mB & \eye_{p}
\end{bmatrix}^{-1}
\begin{bmatrix}
\mF_{1}+\rho\mA^{\top}\mA &  \rho\mA^{\top}\mB& \mA^{\top}
\\
&\mF_{2} & \mB^{\top}
\\
-\rho\mA&-\rho\mB&\mzero
\end{bmatrix}
\\
\doteq&\eye-\mPhi,
\end{align*}
where the third equality from the end follows from \Cref{lem:matrix:inverse}, and in the last line we define
\begin{align}
\mPhi\doteq
\begin{bmatrix}
\mF_{1}+\frac{1}{\eta}\mH_{1} +\rho\mA^{\top}\mA&&\\
&\mF_{2}+\frac{1}{\eta}\mH_{2}&\mB^{\top}\\
-\rho\mA & -\rho \mB & \eye_{p}
\end{bmatrix}^{-1}
\begin{bmatrix}
\mF_{1}+\rho\mA^{\top}\mA &  \rho\mA^{\top}\mB& \mA^{\top}
\\
&\mF_{2} & \mB^{\top}
\\
-\rho\mA&-\rho\mB&\mzero
\end{bmatrix}
\label{phi:badmm}
\end{align}

\begin{lemma}\label{lem:matrix:inverse}
If $\mF_{2}+\rho\mB^\top\mB+\frac{1}{\eta}\mH_{2}$ is positive definite (which holds on the primal domain by relative smoothness, strong convexity of $h_2$, and $\eta<1/L_2$), then
{\footnotesize$\begin{bmatrix}
\eye_{n} &&\\
&\mF_{2}+\frac{1}{\eta}\mH_{2}&\mB^{\top}\\
-\rho\mA & -\rho \mB & \eye_{p}
\end{bmatrix}$}
is nonsingular on the primal domain, and its inverse is given by
\begin{align*}
&\begin{bmatrix}
\eye_{n} &&\\
&\mF_{2}+\frac{1}{\eta}\mH_{2}&\mB^{\top}\\
-\rho\mA & -\rho \mB & \eye_{p}
\end{bmatrix}^{-1}
=
\begin{bmatrix}
\eye_{n} &&\\
& \eye_{m} &\\
\rho \mA & \rho \mB & \eye_{p}
\end{bmatrix}
\begin{bmatrix}
\eye_{n} &&\\
&(\mF_{2}+\rho\mB^\top\mB+\frac{1}{\eta}\mH_{2})^{-1}&\\
&& \eye_{p}
\end{bmatrix}
\begin{bmatrix}
\eye_{n} &&\\
-\rho\mB^{\top}\mA&\eye_{m}&
-\mB^{\top}\\
&& \eye_{p}
\end{bmatrix}
\end{align*}
\end{lemma}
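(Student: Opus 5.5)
The plan is to verify the claimed factorization directly. I will show that the product on the right-hand side is invertible and that its inverse is exactly the matrix on the left. This gives nonsingularity and the inverse formula at the same time.

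Write $\mS\doteq\mF_2+\frac{1}{\eta}\mH_2$ and $\mathbf{K}\doteq\mS+\rho\mB^\top\mB$. By hypothesis $\mathbf{K}\succ0$; on the primal domain this follows from relative smoothness, strong convexity of $h_2$ and $\eta<1/L_2$, exactly as established for $Dg_2$ above. Denote the three factors on the right-hand side by
\[
\mL\doteq\begin{bmatrix}\eye_n&&\\&\eye_m&\\ \rho\mA&\rho\mB&\eye_p\end{bmatrix},\qquad
\mD\doteq\mathrm{diag}(\eye_n,\mathbf{K}^{-1},\eye_p),\qquad
\mR\doteq\begin{bmatrix}\eye_n&&\\-\rho\mB^\top\mA&\eye_m&-\mB^\top\\&&\eye_p\end{bmatrix}.
\]

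Each factor is invertible. $\mL$ and $\mR$ are unit block-triangular: $\mR$ differs from the identity only in its middle block row. Their inverses are obtained by flipping the signs of the off-diagonal blocks:
\[
\mL^{-1}=\begin{bmatrix}\eye_n&&\\&\eye_m&\\-\rho\mA&-\rho\mB&\eye_p\end{bmatrix},\qquad
\mR^{-1}=\begin{bmatrix}\eye_n&&\\ \rho\mB^\top\mA&\eye_m&\mB^\top\\&&\eye_p\end{bmatrix}.
\]
$\mD$ is invertible because $\mathbf{K}\succ0$. Hence $\mL\mD\mR$ is nonsingular, and
\[
(\mL\mD\mR)^{-1}=\mR^{-1}\,\mathrm{diag}(\eye_n,\mathbf{K},\eye_p)\,\mL^{-1}.
\]

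It remains to compute this product and check that it equals the target matrix.

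1. First, $\mathrm{diag}(\eye_n,\mathbf{K},\eye_p)\mL^{-1}$ has block rows $[\eye_n,0,0]$, $[0,\mathbf{K},0]$ and $[-\rho\mA,-\rho\mB,\eye_p]$.
2. Left-multiplying by $\mR^{-1}$ leaves the first and third block rows unchanged.
3. The second block row becomes
\[
\rho\mB^\top\mA[\eye_n,0,0]+[0,\mathbf{K},0]+\mB^\top[-\rho\mA,-\rho\mB,\eye_p]=[\,0,\ \mathbf{K}-\rho\mB^\top\mB,\ \mB^\top\,]=[\,0,\ \mS,\ \mB^\top\,].
\]

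The result is precisely the matrix in the statement. It is therefore nonsingular, and its inverse is $\mL\mD\mR$, as claimed.

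There is no real obstacle here beyond bookkeeping. The one point to watch is the cancellation $\rho\mB^\top\mA-\rho\mB^\top\mA=0$ in the $(2,1)$ block, together with the recovery of $\mS=\mathbf{K}-\rho\mB^\top\mB$ in the $(2,2)$ block. This is also where the positive definiteness hypothesis enters: it is needed only to invert $\mathbf{K}$, not $\mS$ itself.
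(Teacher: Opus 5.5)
Your proof is correct and is essentially the paper's argument. Both verify the factorization by direct block multiplication: the paper checks that the target matrix times $\mL\mD\mR$ equals $\eye$, while you compute $(\mL\mD\mR)^{-1}=\mR^{-1}\,\mathrm{diag}(\eye_n,\mathbf{K},\eye_p)\,\mL^{-1}$ and recover the target matrix, with the $(2,1)$ cancellation and $\mathbf{K}-\rho\mB^\top\mB=\mS$ checking out.
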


\begin{proof}[Proof of \Cref{lem:matrix:inverse}]
Multiplying the four factors below shows that their product equals the identity matrix:
\begin{align*}
&\begin{bmatrix}
\eye_{n} &&\\
&\mF_{2}+\frac{1}{\eta}\mH_{2}&\mB^{\top}\\
-\rho\mA & -\rho \mB & \eye_{p}
\end{bmatrix}
\begin{bmatrix}
\eye_{n} &&\\
& \eye_{m} &\\
\rho \mA & \rho \mB & \eye_{p}
\end{bmatrix}
\begin{bmatrix}
\eye_{n} &&\\
&(\mF_{2}+\rho\mB^\top\mB+\frac{1}{\eta}\mH_{2})^{-1}&\\
&& \eye_{p}
\end{bmatrix}
\begin{bmatrix}
\eye_{n} &&\\
-\rho\mB^{\top}\mA&\eye_{m}&
-\mB^{\top}\\
&& \eye_{p}
\end{bmatrix}
=
\eye.
\end{align*}
Therefore {\footnotesize$\begin{bmatrix}
\eye_{n} &&\\
&\mF_{2}+\frac{1}{\eta}\mH_{2}&\mB^{\top}\\
-\rho\mA & -\rho \mB & \eye_{p}
\end{bmatrix}$} is nonsingular  as long as
$\mF_{2}+\rho\mB^\top\mB+\frac{1}{\eta}\mH_{2}$ is nonsingular.
\end{proof}
Second, we reduce the problem of showing that $Dg(\vx^{\star},\vy^{\star},\vlambda^{\star})$ has an eigenvalue of magnitude greater than 1 to showing that $\mPhi$ (see \eqref{phi:badmm}) has a real negative eigenvalue, or equivalently, that
$\det(\mPhi+\mu\eye)=0$ for some $\mu>0$. The following chain of equivalences reduces this singularity condition to that of a real-symmetric matrix $\mJ(\mu)$; the key steps are Schur complementation (to eliminate the dual block) and a diagonal similarity transform (to symmetrize the off-diagonal blocks). Using $\det(\mU\mV)=\det(\mU)\det(\mV)$ and $\det(\mU^{-1})=\det(\mU)^{-1}$, and multiplying by the nonzero factor {\footnotesize$\det\begin{pmatrix}\mF_{1}+\frac{1}{\eta}\mH_{1} +\rho\mA^{\top}\mA&&\\ &\mF_{2}+\frac{1}{\eta}\mH_{2}&\mB^{\top}\\ -\rho\mA & -\rho \mB & \eye_{p}\end{pmatrix}^{-1}$}, we obtain the following equivalent determinant conditions:
\begin{align*}
&\det(\mPhi+\mu\eye)=0
\\
\Leftrightarrow &
\det\left(
\begin{bmatrix}
\mF_{1}+\frac{1}{\eta}\mH_{1} +\rho\mA^{\top}\mA&&\\
&\mF_{2}+\frac{1}{\eta}\mH_{2}&\mB^{\top}\\
-\rho\mA & -\rho \mB & \eye_{p}
\end{bmatrix}^{-1}
\begin{bmatrix}
\mF_{1}+\rho\mA^{\top}\mA &  \rho\mA^{\top}\mB& \mA^{\top}
\\
&\mF_{2} & \mB^{\top}
\\
-\rho\mA&-\rho\mB&\mzero
\end{bmatrix}
+\mu\eye
\right)=0
\\
\Leftrightarrow &
\det\left(
\begin{bmatrix}
\mF_{1}+\rho\mA^{\top}\mA &  \rho\mA^{\top}\mB& \mA^{\top}
\\
  &\mF_{2} & \mB^{\top}
\\
-\rho\mA&-\rho\mB&\mzero
\end{bmatrix}
+\mu
\begin{bmatrix}
\mF_{1}+\frac{1}{\eta}\mH_{1} +\rho\mA^{\top}\mA&&\\
  &\mF_{2}+\frac{1}{\eta}\mH_{2}&\mB^{\top}\\
-\rho\mA & -\rho \mB & \eye_{p}
\end{bmatrix}
\right)=0
\\
\Leftrightarrow &
\det\left(
\begin{bmatrix}
(1+\mu)\mF_{1}+(1+\mu)\rho\mA^{\top}\mA+\frac{\mu}{\eta}\mH_{1} &  \rho\mA^{\top}\mB& \mA^{\top}
\\
&(1+\mu)\mF_{2}+\frac{\mu}{\eta}\mH_{2} & (1+\mu)\mB^{\top}
\\
-(1+\mu)\rho\mA&-(1+\mu)\rho\mB&\mu\eye_{p}
\end{bmatrix}
\right)=0
\\
\Leftrightarrow &
\det\left(
\begin{bmatrix}
(1+\mu)\mF_{1}+(1+\mu)\rho\mA^{\top}\mA+\frac{\mu}{\eta}\mH_{1} &  \rho\mA^{\top}\mB& \mA^{\top}
\\
(1+\mu)\rho\mB^\top\mA &(1+\mu)\mF_{2}+(1+\mu)\rho\mB^\top\mB+\frac{\mu}{\eta}\mH_{2} & \mB^{\top}
\\
-(1+\mu)\rho\mA&-(1+\mu)\rho\mB&\mu\eye_{p}
\end{bmatrix}
\right)=0
\\
\Leftrightarrow &
\det\left(
\begin{bmatrix}
(1+\mu)\mF_{1}+(1+\mu)\rho\mA^{\top}\mA+\frac{\mu}{\eta}\mH_{1} &  \rho\mA^{\top}\mB
\\
(1+\mu)\rho\mB^\top\mA& (1+\mu)\mF_{2}+(1+\mu)\rho\mB^\top\mB+\frac{\mu}{\eta}\mH_{2}
\end{bmatrix}
+(1+\frac{1}{\mu})\rho
\begin{bmatrix}
	\mA^\top\mA& \mA^\top\mB\\
\mB^\top\mA & \mB^\top\mB
\end{bmatrix}
\right)=0
\\
\Leftrightarrow &
\det\left(
\begin{bmatrix}
(1+\mu)\mF_{1}+(2+\mu+\frac{1}{\mu})\rho\mA^{\top}\mA+\frac{\mu}{\eta}\mH_{1} &  (2+\frac{1}{\mu})\rho\mA^{\top}\mB
\\
(2+\mu+\frac{1}{\mu})\rho\mB^\top\mA &
(1+\mu)\mF_{2}+(2+\mu+\frac{1}{\mu})\rho\mB^\top\mB+\frac{\mu}{\eta}\mH_{2}
\end{bmatrix}
\right)=0
\\
\Leftrightarrow &
\det\left(
\begin{bmatrix}
(1+\mu)\mF_{1}+(2+\mu+\frac{1}{\mu})\rho\mA^{\top}\mA+\frac{\mu}{\eta}\mH_{1} &  \sqrt{(2+\frac{1}{\mu})(2+\mu+\frac{1}{\mu})}\rho\mA^{\top}\mB
\\
\sqrt{(2+\frac{1}{\mu})(2+\mu+\frac{1}{\mu})}\rho\mB^\top\mA
&
(1+\mu)\mF_{2}+(2+\mu+\frac{1}{\mu})\rho\mB^\top\mB+\frac{\mu}{\eta}\mH_{2}
\end{bmatrix}
\right)=0,
\end{align*}
The first two equivalences use $\mPhi+\mu\eye=\mM^{-1}(\mN+\mu\mM)$ (from $\mPhi=\mM^{-1}\mN$), so $\det(\mPhi+\mu\eye)=\det(\mM^{-1})\det(\mN+\mu\mM)$. Here $\mM$ is nonsingular: taking the Schur complement with respect to its bottom-right block $\eye_p$ yields a block-lower-triangular matrix with positive definite diagonal blocks $\mF_{1}+\frac{1}{\eta}\mH_{1}+\rho\mA^{\top}\mA$ and $\mF_{2}+\frac{1}{\eta}\mH_{2}+\rho\mB^{\top}\mB$, so $\det(\mM^{-1})\neq 0$. The third adds $\mN$ and $\mu\mM$ blockwise.
The fourth applies the block row operation $R_2\leftarrow R_2-\mB^{\top}R_3$ (where $R_i$ denotes the $i$-th block row), which preserves the determinant. Since $R_3=[-(1+\mu)\rho\mA,\,-(1+\mu)\rho\mB,\,\mu\eye_p]$, subtracting $\mB^{\top}R_3$ from $R_2=[0,\,(1+\mu)\mF_2+\tfrac{\mu}{\eta}\mH_2,\,(1+\mu)\mB^{\top}]$ gives
\begin{align*}  
&[(1+\mu)\rho\mB^{\top}\mA,\;(1+\mu)\mF_2+(1+\mu)\rho\mB^{\top}\mB+\tfrac{\mu}{\eta}\mH_2,\;(1+\mu)\mB^{\top}-\mu\mB^{\top}]
\\=&
[(1+\mu)\rho\mB^{\top}\mA,\;(1+\mu)\mF_2+(1+\mu)\rho\mB^{\top}\mB+\tfrac{\mu}{\eta}\mH_2,\;\mB^{\top}].
\end{align*}
The fifth applies the Schur complement with respect to $\mu\eye_p$.
Writing the $3\times 3$ matrix after the fourth step as $\bigl[\begin{smallmatrix}\mP&\mQ\\\mR^{\top}&\mu\eye_p\end{smallmatrix}\bigr]$ with $\mQ=\left[\begin{smallmatrix}\mA^{\top}\\\mB^{\top}\end{smallmatrix}\right]$ and $\mR^{\top}=-(1+\mu)\rho[\mA\ \mB]$, the Schur formula gives
\[
\det\!\begin{bmatrix}\mP&\mQ\\\mR^{\top}&\mu\eye_p\end{bmatrix}=\mu^p\det\!\Bigl(\mP-\tfrac{1}{\mu}\mQ\mR^{\top}\Bigr).
\]
Since $\mu^p\neq 0$, the condition reduces to $\det(\mP-\tfrac{1}{\mu}\mQ\mR^{\top})=0$, where
\[
-\tfrac{1}{\mu}\mQ\mR^{\top}=\Bigl(1+\tfrac{1}{\mu}\Bigr)\rho\begin{bmatrix}\mA^{\top}\mA&\mA^{\top}\mB\\\mB^{\top}\mA&\mB^{\top}\mB\end{bmatrix}.
\]
The sixth collects like terms: $(1+\mu)+(1+\tfrac{1}{\mu})=2+\mu+\tfrac{1}{\mu}$ on the diagonal $\rho\mA^{\top}\mA$, $\rho\mB^{\top}\mB$ blocks and $1+(1+\tfrac{1}{\mu})=2+\tfrac{1}{\mu}$ on the off-diagonal $\rho\mA^{\top}\mB$ block; call the resulting $2\times 2$ block matrix $\mC(\mu)$. Its $(1,2)$ and $(2,1)$ blocks carry different coefficients $2+\tfrac{1}{\mu}$ and $2+\mu+\tfrac{1}{\mu}$, so $\mC(\mu)$ is not yet symmetric.
The last step symmetrizes $\mC(\mu)$:
Setting $\mD=\mathrm{diag}(\eye_n,\alpha\eye_m)$ with $\alpha=\sqrt{(2+\mu+\tfrac{1}{\mu})/(2+\tfrac{1}{\mu})}$, the similarity $\mD^{-1}\mC(\mu)\mD$ scales the $(1,2)$ block by $\alpha$ and the $(2,1)$ block by $\alpha^{-1}$, equalizing both to $\sqrt{(2+\tfrac{1}{\mu})(2+\mu+\tfrac{1}{\mu})}\,\rho\mA^{\top}\mB$, yielding the symmetric matrix $\mJ(\mu)$; and $\det(\mC(\mu))=0\Leftrightarrow\det(\mJ(\mu))=0$ since $\det(\mD)\neq 0$.

Therefore, the problem is now reduced to showing $\mJ(\mu)$ is singular for some $\mu>0$,
where
\[
\mJ(\mu)
\doteq
\begin{bmatrix}
(1+\mu)\mF_{1}+(2+\mu+\frac{1}{\mu})\rho\mA^{\top}\mA+\frac{\mu}{\eta}\mH_{1} &  \sqrt{(2+\frac{1}{\mu})(2+\mu+\frac{1}{\mu})}\rho\mA^{\top}\mB
\\
\sqrt{(2+\frac{1}{\mu})(2+\mu+\frac{1}{\mu})}\rho\mB^\top\mA &(1+\mu)\mF_{2}+(2+\mu+\frac{1}{\mu})\rho\mB^\top\mB+\frac{\mu}{\eta}\mH_{2}
\end{bmatrix}
\]
The matrix $\mJ(\mu)$ is real-symmetric and varies continuously with $\mu$ \cite[Theorem~5.1]{kato2013perturbation}, so its eigenvalues are real and continuous. It suffices to prove that $\lambda_{\min}(\mJ(\mu))$ crosses zero for some $\mu>0$, since $\lambda_{\min}(\mJ(\mu^\star))=0$ implies $\det(\mJ(\mu^\star))=0$.

\begin{remark}[Intuition for the $\mu$-dependent test direction]\label{rem:intuition:ADMM}
In the single-block ALM (\Cref{lem:mm:unstable}) one can take a fixed $\vz\in\Null(\mA)$: the penalty $\rho\mA^{\top}\mA$ vanishes on $\Null(\mA)$ independently of~$\mu$.
With two blocks the cross-coupling $\rho\mA^{\top}\mB$ complicates matters.
Before symmetrization, the reduced matrix $\mC(\mu)$ has asymmetric off-diagonal coefficients $2+\frac{1}{\mu}$ and $2+\mu+\frac{1}{\mu}$. The diagonal similarity transform equalizes these coefficients in the symmetric matrix $\mJ(\mu)$, but it also changes the natural strict-saddle test direction to the scaled direction $(\vd_x,s(\mu)\vd_y)$, where $s(\mu)\doteq \sqrt{(2+\mu+\frac{1}{\mu})/(2+\frac{1}{\mu})}$.
This scaled direction lets the KKT constraint $\mA\vd_x+\mB\vd_y=\vzero$ eliminate the leading cross term; after expansion, the remaining penalty contribution is a single non-negative term $\rho\cdot(\text{positive factor})\cdot\mu\|\mB\vd_y\|_2^2$ that vanishes as $\mu\to0^{+}$.
Since $s(\mu)\to1$ as $\mu\to0^{+}$, the test direction reduces to the strict-saddle pair $(\vd_x,\vd_y)$ and the quadratic form tends to $\vd_x^{\top}\mF_1\vd_x+\vd_y^{\top}\mF_2\vd_y<0$.
\end{remark}

\begin{lemma}\label{lem:phi:zero:badmm}
Let $\vy(\mu)=(\vd_x, s(\mu) \vd_y)$ with $\vd_x,\vd_y$ defined in KKT condition \eqref{KKT:2:badmm} and $s(\mu)\doteq \frac{\sqrt{2+\mu+\frac{1}{\mu}}}{\sqrt{2+\frac{1}{\mu}}}$. Then $\lambda_{\min}(\mJ(\mu^\star))=0$ for some $\mu^\star>0$, hence $\det(\mJ(\mu^\star))=0$.
\end{lemma}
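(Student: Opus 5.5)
The plan is to apply the intermediate value theorem to $\lambda_{\min}(\mJ(\mu))$ on $(0,\infty)$, exactly as in \Cref{lem:mm:unstable}. Continuity of $\lambda_{\min}(\mJ(\mu))$ for $\mu>0$ is already available, since $\mJ(\mu)$ is real symmetric with entries continuous in $\mu$. So it suffices to show two things: $\vy(\mu)^\top\mJ(\mu)\vy(\mu)<0$ for all sufficiently small $\mu>0$, and $\mJ(\mu)\succ 0$ for all sufficiently large $\mu$.

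\emph{Small $\mu$.} Write $a\doteq 2+\mu+\frac{1}{\mu}$ and $c\doteq 2+\frac{1}{\mu}$, so that $s(\mu)^2=a/c$ and the off-diagonal coefficient of $\mJ(\mu)$ is $\sqrt{ac}$. Two identities drive the computation:
\[
\sqrt{ac}\,s(\mu)=a, \qquad s(\mu)^2=\frac{(1+\mu)^2}{1+2\mu}.
\]
The second follows from $a\mu=(1+\mu)^2$ and $c\mu=1+2\mu$. Expanding the quadratic form along $\vy(\mu)=(\vd_x,s(\mu)\vd_y)$, the $\rho$-dependent terms collect into
\[
\rho a\bigl(\|\mA\vd_x\|_2^2+2\langle\mA\vd_x,\mB\vd_y\rangle+s(\mu)^2\|\mB\vd_y\|_2^2\bigr)
=\rho a\bigl(\|\mA\vd_x+\mB\vd_y\|_2^2+(s(\mu)^2-1)\|\mB\vd_y\|_2^2\bigr).
\]
By \eqref{KKT:2:badmm} we have $\mA\vd_x+\mB\vd_y=\vzero$. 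Also $s(\mu)^2-1=\mu/c$, so the whole penalty reduces to
\[
\rho\,\frac{a\mu}{c}\,\|\mB\vd_y\|_2^2=\rho\,\frac{\mu(1+\mu)^2}{1+2\mu}\,\|\mB\vd_y\|_2^2,
\]
which tends to $0$ as $\mu\to0^+$. The remaining terms are
\[
(1+\mu)\bigl(\vd_x^\top\mF_1\vd_x+s(\mu)^2\vd_y^\top\mF_2\vd_y\bigr)+\frac{\mu}{\eta}\bigl(\vd_x^\top\mH_1\vd_x+s(\mu)^2\vd_y^\top\mH_2\vd_y\bigr).
\]
Since $s(\mu)\to1$, these converge to $\vd_x^\top\mF_1\vd_x+\vd_y^\top\mF_2\vd_y<0$. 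Because $\|\vy(\mu)\|_2$ stays bounded away from zero, the Rayleigh quotient gives $\lambda_{\min}(\mJ(\mu))<0$ for small $\mu>0$.

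\emph{Large $\mu$.} I would divide by $\mu$. The diagonal blocks of $\mJ(\mu)/\mu$ converge to $\mF_1+\frac{1}{\eta}\mH_1+\rho\mA^\top\mA$ and $\mF_2+\frac{1}{\eta}\mH_2+\rho\mB^\top\mB$, because $a/\mu\to1$ and $(1+\mu)/\mu\to1$. The off-diagonal coefficient satisfies $\sqrt{ac}/\mu\sim\sqrt{2\mu}/\mu\to0$, so the off-diagonal blocks vanish in the limit. Each limiting diagonal block is positive definite: two-sided relative smoothness gives $\mF_i\succeq -L_i\mH_i$, and $\eta<1/L_i$ together with $\mH_i\succ0$ then gives $\mF_i+\frac{1}{\eta}\mH_i\succ0$; the added $\rho$-terms are positive semidefinite. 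By continuity of eigenvalues, $\lambda_{\min}(\mJ(\mu))>0$ for large $\mu$.

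The intermediate value theorem then yields $\mu^\star>0$ with $\lambda_{\min}(\mJ(\mu^\star))=0$, hence $\det(\mJ(\mu^\star))=0$. The main obstacle is the small-$\mu$ step: both $a$ and $c$ blow up like $1/\mu$ as $\mu\to0^+$, so the cross term must be cancelled exactly rather than bounded. The identity $\sqrt{ac}\,s(\mu)=a$, which is precisely why this particular scaling $s(\mu)$ is chosen, is what makes the cancellation work and leaves only the residual $\rho\mu(1+\mu)^2\|\mB\vd_y\|_2^2/(1+2\mu)$, which is $O(\mu)$.
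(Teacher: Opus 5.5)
Your proposal is correct and follows essentially the same route as the paper. You use the same expansion of $\vy(\mu)^\top\mJ(\mu)\vy(\mu)$: the constraint $\mA\vd_x+\mB\vd_y=\vzero$ removes the cross term and leaves the residual $\rho\mu(1+\mu)^2\|\mB\vd_y\|_2^2/(1+2\mu)$. You then use the same limit $\mJ(\mu)/\mu\to\mathrm{diag}(\mF_1+\frac{1}{\eta}\mH_1+\rho\mA^\top\mA,\,\mF_2+\frac{1}{\eta}\mH_2+\rho\mB^\top\mB)\succ0$ and the intermediate value theorem on $\lambda_{\min}(\mJ(\mu))$. The only difference is that you omit the paper's extra step showing $\phi(N)>0$ for large $N$, with its separate $\vd_y=\vzero$ case; that step is unnecessary once $\mJ(\mu)\succ0$ for large $\mu$ is shown directly.
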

\begin{proof}[Proof of \Cref{lem:phi:zero:badmm}]
Define $\phi(\mu)\doteq\vy(\mu)^{\top}\mJ(\mu)\vy(\mu)$. Expanding the quadratic form using the definition of $\mJ(\mu)$ and $\vy(\mu)=(\vd_x,s(\mu)\vd_y)$, and using the KKT constraint $\mA\vd_x+\mB\vd_y=\vzero$ to eliminate the cross term, yields
\begin{align*}
\phi(\mu)
&=\frac{\mu}{\eta}(\vd_x^\top\mH_1\vd_x+s(\mu)^2\vd_y^\top\mH_2\vd_y)+(1+\mu)(\vd_x^\top\mF_1\vd_x+s(\mu)^2\vd_y^\top\mF_2\vd_y)
+\rho(2+\mu+\frac{1}{\mu})(s(\mu)^2-1)\|\mB\vd_y\|_2^2
\\
 &=\frac{\mu}{\eta}(\vd_x^\top\mH_1\vd_x+s(\mu)^2\vd_y^\top\mH_2\vd_y)+(1+\mu)(\vd_x^\top\mF_1\vd_x+s(\mu)^2\vd_y^\top\mF_2\vd_y)+\rho\frac{2+\mu+\frac{1}{\mu}}{2+\frac{1}{\mu}}\mu\|\mB\vd_y\|_2^2
\\
 &=\frac{\mu}{\eta}(\vd_x^\top\mH_1\vd_x+s(\mu)^2\vd_y^\top\mH_2\vd_y)+(1+\mu)(\vd_x^\top\mF_1\vd_x+s(\mu)^2\vd_y^\top\mF_2\vd_y)+\rho\frac{2\mu+\mu^2+1}{2\mu+1}\mu\|\mB\vd_y\|_2^2
\end{align*}
where in the second line we used $s(\mu)^2-1=\frac{\mu}{2+\frac{1}{\mu}}$ (from the definition of $s(\mu)$), and in the last line we rewrote $\frac{2+\mu+\frac{1}{\mu}}{2+\frac{1}{\mu}}=\frac{2\mu+\mu^2+1}{2\mu+1}$.
We first use $\phi(\mu)$ to show that $\lambda_{\min}(\mJ(\mu))<0$ for sufficiently small $\mu>0$. We then prove separately that $\mJ(\mu)\succ0$ for sufficiently large $\mu$. The intermediate value theorem applied to $\lambda_{\min}(\mJ(\mu))$ gives the desired zero crossing.
\begin{itemize}[leftmargin=*]
	\item First, since $\lim_{\mu\to0^+}s(\mu)=1$, we have
\[\lim_{\mu\to0^+}\phi(\mu)=\vd_{x}^{\top}\mF_1\vd_x+\vd_y^\top\mF_2\vd_y <0,\]
by KKT condition \eqref{KKT:2:badmm}.
\item Second, we show that $\phi(N)>0$ for some sufficiently large $N$.
If $\vd_y\neq\vzero$, then since 
$\lim_{\mu\to\infty}\frac{s(\mu)^2}{\mu}=\frac{1}{2},$
we have
\begin{align*}
\lim_{\mu\to\infty}\frac{\phi(\mu)}{\mu^{2}}
&=\frac{1}{2\eta}\vd_y^\top\mH_2\vd_y+\frac{1}{2}\vd_y^\top\mF_2\vd_y+\frac{\rho}{2}\|\mB\vd_y\|_2^2
= \frac{1}{2}\vd_y^\top\left( \frac{1}{\eta}\mH_2+\mF_2+\rho\mB^\top\mB\right)\vd_y
>0,
\end{align*}
where the last inequality uses $\frac{1}{\eta}\mH_2+\mF_2\succ0$ (from relative smoothness, strong convexity of $h_2$, and $\eta<1/L_2$) and $\mB^\top\mB\succeq0$.
Therefore $\phi(N)>0$ for some sufficiently large $N$.
If $\vd_y=\vzero$, then $\vd_x\neq\vzero$ (since $(\vd_x,\vd_y)\neq(\vzero,\vzero)$), and the KKT condition \eqref{KKT:2:badmm} gives $\mA\vd_x=\vzero$.
In this case $s(\mu)^2\vd_y^\top\mH_2\vd_y=0$, $s(\mu)^2\vd_y^\top\mF_2\vd_y=0$, and $\|\mB\vd_y\|_2^2=0$, so
\[
\lim_{\mu\to\infty}\frac{\phi(\mu)}{\mu}=\vd_x^\top\!\left(\frac{1}{\eta}\mH_1+\mF_1\right)\vd_x >0,
\]
by the positive definiteness of $\frac{1}{\eta}\mH_1+\mF_1+\rho\mA^\top\mA$ (from relative smoothness, strong convexity of $h_1$, and $\eta<1/L_1$) and $\mA\vd_x=\vzero$. Therefore $\phi(N)>0$ for some sufficiently large $N$.

\item Finally, we combine the two cases to conclude that $\mJ(\mu)$ is singular for some $\mu^\star>0$.
From the first item we have, by continuity of $\phi$ on $(0,\infty)$, that $\phi(\mu)<0$ for all sufficiently small $\mu>0$, which gives $\vy(\mu)^\top\mJ(\mu)\vy(\mu)<0$
and therefore $\lambda_{\min}(\mJ(\mu))<0$.
From the second item we have $\phi(N)>0$ for some sufficiently large $N>0$. In addition, we show $\lambda_{\min}(\mJ(N))>0$ for large $N$ via a large-$\mu$ positive-definiteness argument: dividing $\mJ(\mu)$ by $\mu$, the off-diagonal coefficient is $O(\sqrt{\mu})$ and therefore vanishes after scaling by $1/\mu$, while the diagonal blocks converge. Define
\[
\bar{\mJ}\doteq\lim_{\mu\to\infty}\frac{\mJ(\mu)}{\mu}
=
\begin{bmatrix}
\mF_1+\rho\mA^\top\mA+\tfrac{1}{\eta}\mH_1 & \mzero\\
\mzero & \mF_2+\rho\mB^\top\mB+\tfrac{1}{\eta}\mH_2
\end{bmatrix}\succ0,
\]
where the positive definiteness follows from relative smoothness \eqref{eqn:general:lipschitz}, strong convexity of $h_i$ ($i=1,2$), and $\eta<1/L_i$. By continuity of eigenvalues for symmetric matrices, $\lambda_{\min}(\mJ(\mu)/\mu)\to \lambda_{\min}(\bar{\mJ})>0$, so $\lambda_{\min}(\mJ(N))>0$ for some sufficiently large $N$.
Since $\mJ(\mu)$ is a real-symmetric matrix that varies continuously in $\mu$, its eigenvalues are continuous functions of $\mu$ (see, e.g., \cite[Theorem~5.1]{kato2013perturbation}).
Therefore, by the intermediate value theorem applied to $\lambda_{\min}(\mJ(\mu))$, there exists $\mu^\star\in(0,N)$ such that
$\lambda_{\min}(\mJ(\mu^\star))=0$,
i.e., $\mJ(\mu^\star)$ is singular.\end{itemize}
\end{proof}

This completes the proof that strict-saddle KKT points in $\Omega$ are unstable fixed points of the fixed-point map of \Cref{alg:badmm}.
By \Cref{thm:jason}, the set of initial points in $\Omega$ from which the iteration converges to such a strict-saddle KKT point has Lebesgue measure zero; since random initialization is absolutely continuous with respect to Lebesgue measure, the probability that the iterates converge to such a strict-saddle KKT point is zero.
\end{proof}

\section{Proof of \texorpdfstring{\Cref{thm:cadmm}}{Theorem~\ref{thm:cadmm}}}\label{sec:thm:cadmm}

\begin{proof}[Proof of \Cref{thm:cadmm}]

We apply \Cref{thm:jason} to the fixed-point map $g$ of \Cref{alg:cadmm}, verifying $\det(Dg)\neq 0$ everywhere and that every strict-saddle KKT point in $\Omega$ is an unstable fixed point.

The consensus formulation couples local primal blocks through agreement constraints, yielding a Jacobian with large but structured block form. The argument parallels the two-block case: positive definiteness of the local Bregman-proximal Hessians gives nonsingularity, and the consensus coupling cannot cancel the negative curvature at a strict saddle, so the linearized map retains an expanding direction.

\paragraph{Constructing the fixed-point map $g$.}
Define the fixed-point map of \Cref{alg:cadmm} as
\begin{align}
(\vx_{1}^{+}, \vy_{1}^{+},\cdots,\vx_{J}^{+},\vy_{J}^{+}, \vlambda_{2}^{+},\cdots,\vlambda_{J}^{+})=g(\vx_{1}, \vy_{1},\cdots,\vx_{J},\vy_{J}, \vlambda_{2},\cdots,\vlambda_{J}).
\label{def:mapping2}
\end{align}
We first write the Jacobian and instability calculation explicitly for the representative case $J=3$, which is the smallest case showing the star coupling between the hub and multiple non-hub agents. The same block argument extends to arbitrary $J\ge2$; the case $J=2$ is the simpler one-non-hub specialization. For a general number of agents, write
\[
\vz=(\vx_{1},\vy_{1},\vx_{2},\vy_{2},\ldots,\vx_{J},\vy_{J},\vlambda_{2},\ldots,\vlambda_{J}).
\]
The star-consensus constraints are $\vx_j-\vx_1=\vzero$ for $j\ge 2$, with multipliers $\vlambda_j$. Each constraint couples only the pair $(\vx_1,\vx_j)$ through $\vlambda_j$, so different non-hub agents do not interact directly: the block $(\vx_j,\vy_j,\vlambda_j)$ interacts with $(\vx_1,\vy_1)$, but there is no coupling between $(\vx_j,\vy_j,\vlambda_j)$ and $(\vx_\ell,\vy_\ell,\vlambda_\ell)$ for $j\neq \ell$, $j,\ell\ge 2$. Accordingly, $Dg(\vz)$ has the same star-shaped block sparsity as in the $J=3$ case.
All nonsingularity calculations in the $J=3$ case detailed next are agent-wise and use only strong convexity and the implicit function theorem for each local $(\vx_j,\vy_j)$-subproblem; the same calculation is repeated for $j=2,\ldots,J$. The instability construction is also unchanged: the Schur-reduced matrix has the same replicated $j$-block structure, and the general-$J$ test direction is obtained by repeating the same $\vx$-component across agents (see \Cref{rem:cadmm:generalJ} below for the explicit coupling-matrix form used in that step).
\begin{align}
(\vx_{1}^{+}, \vy_{1}^{+},\vx_{2}^{+}, \vy_{2}^{+},\vx_{3}^{+},\vy_{3}^{+}, \vlambda_{2}^{+},\vlambda_{3}^{+})=g(\vx_{1}, \vy_{1},\vx_{2}, \vy_{2},\vx_{3},\vy_{3}, \vlambda_{2},\vlambda_{3}).
\label{def:mapping}
\end{align}

\paragraph{Computing the Jacobian $Dg$ at a fixed point of \Cref{alg:cadmm}.}
Relative bi-smoothness is the positive-semidefinite inequality (Definition~\ref{defi:adaptive:lipschitz:coordinate})
\[
L_j^x\nabla^{2}_{\vx\vx}h_j(\vx,\vy)\pm \nabla^{2}_{\vx\vx}f_j(\vx,\vy)\succeq 0,\qquad
L_j^y\nabla^{2}_{\vy\vy}h_j(\vx,\vy)\pm \nabla^{2}_{\vy\vy}f_j(\vx,\vy)\succeq 0,
\]
in particular,
\[
\nabla^2_{\vx\vx} f_j(\vx,\vy)\succeq -L_j^x\nabla^2_{\vx\vx} h_j(\vx,\vy),\qquad
\nabla^2_{\vy\vy} f_j(\vx,\vy)\succeq -L_j^y\nabla^2_{\vy\vy} h_j(\vx,\vy)
\]
By assumption, each bi-Bregman kernel $h_j$ is strongly bi-convex, so $\nabla^2_{\vx\vx}h_j(\vx,\vy)\succ 0$ and $\nabla^2_{\vy\vy}h_j(\vx,\vy)\succ 0$ on the primal domain.
Hence for $\eta<\min(1/L_j^x,1/L_j^y)$,
\[
\nabla^2_{\vx\vx} f_j(\vx,\vy)+\frac{1}{\eta}\nabla^2_{\vx\vx} h_j(\vx,\vy)\succ 0,\qquad
\nabla^2_{\vy\vy} f_j(\vx,\vy)+\frac{1}{\eta}\nabla^2_{\vy\vy} h_j(\vx,\vy)\succ 0.
\]
Adding the consensus penalty terms preserves positive definiteness, so each block subproblem in \Cref{alg:cadmm} is strongly convex with a unique minimizer. For the global nonsingularity argument below, it is therefore enough to differentiate each elementary update map only with respect to its updated variable. For the instability argument, we only need the Jacobian at a fixed point. At such a point, differentiating the bi-Bregman terms produces no leftover mixed-anchor terms, because the anchor and updated variables coincide ($\vx_j^+=\vx_j$ and $\vy_j^+=\vy_j$). Thus the fixed-point Jacobian of $g$ satisfies
\begin{align}
&Dg(\vx_{1}, \vy_{1},\vx_{2}, \vy_{2},\vx_{3},\vy_{3}, \vlambda_{2},\vlambda_{3})
=
\begin{bmatrix}
\frac{\partial\vx_{1}^{+}}{\partial\vx_{1}}&
\frac{\partial\vx_{1}^{+}}{\partial\vy_{1}}&
\frac{\partial\vx_{1}^{+}}{\partial\vx_{2}}&
\frac{\partial\vx_{1}^{+}}{\partial\vy_{2}}&
\frac{\partial\vx_{1}^{+}}{\partial\vx_{3}}&
\frac{\partial\vx_{1}^{+}}{\partial\vy_{3}}&
\frac{\partial\vx_{1}^{+}}{\partial\vlambda_{2}}&
\frac{\partial\vx_{1}^{+}}{\partial\vlambda_{3}}
\\
\frac{\partial\vy_{1}^{+}}{\partial\vx_{1}}&
\frac{\partial\vy_{1}^{+}}{\partial\vy_{1}}&
\frac{\partial\vy_{1}^{+}}{\partial\vx_{2}}&
\frac{\partial\vy_{1}^{+}}{\partial\vy_{2}}&
\frac{\partial\vy_{1}^{+}}{\partial\vx_{3}}&
\frac{\partial\vy_{1}^{+}}{\partial\vy_{3}}&
\frac{\partial\vy_{1}^{+}}{\partial\vlambda_{2}}&
\frac{\partial\vy_{1}^{+}}{\partial\vlambda_{3}}
\\
\frac{\partial\vx_{2}^{+}}{\partial\vx_{1}}&
\frac{\partial\vx_{2}^{+}}{\partial\vy_{1}}&
\frac{\partial\vx_{2}^{+}}{\partial\vx_{2}}&
\frac{\partial\vx_{2}^{+}}{\partial\vy_{2}}&
\frac{\partial\vx_{2}^{+}}{\partial\vx_{3}}&
\frac{\partial\vx_{2}^{+}}{\partial\vy_{3}}&
\frac{\partial\vx_{2}^{+}}{\partial\vlambda_{2}}&
\frac{\partial\vx_{2}^{+}}{\partial\vlambda_{3}}
\\
\frac{\partial\vy_{2}^{+}}{\partial\vx_{1}}&
\frac{\partial\vy_{2}^{+}}{\partial\vy_{1}}&
\frac{\partial\vy_{2}^{+}}{\partial\vx_{2}}&
\frac{\partial\vy_{2}^{+}}{\partial\vy_{2}}&
\frac{\partial\vy_{2}^{+}}{\partial\vx_{3}}&
\frac{\partial\vy_{2}^{+}}{\partial\vy_{3}}&
\frac{\partial\vy_{2}^{+}}{\partial\vlambda_{2}}&
\frac{\partial\vy_{2}^{+}}{\partial\vlambda_{3}}
\\
\frac{\partial\vx_{3}^{+}}{\partial\vx_{1}}&
\frac{\partial\vx_{3}^{+}}{\partial\vy_{1}}&
\frac{\partial\vx_{3}^{+}}{\partial\vx_{2}}&
\frac{\partial\vx_{3}^{+}}{\partial\vy_{2}}&
\frac{\partial\vx_{3}^{+}}{\partial\vx_{3}}&
\frac{\partial\vx_{3}^{+}}{\partial\vy_{3}}&
\frac{\partial\vx_{3}^{+}}{\partial\vlambda_{2}}&
\frac{\partial\vx_{3}^{+}}{\partial\vlambda_{3}}
\\
\frac{\partial\vy_{3}^{+}}{\partial\vx_{1}}&
\frac{\partial\vy_{3}^{+}}{\partial\vy_{1}}&
\frac{\partial\vy_{3}^{+}}{\partial\vx_{2}}&
\frac{\partial\vy_{3}^{+}}{\partial\vy_{2}}&
\frac{\partial\vy_{3}^{+}}{\partial\vx_{3}}&
\frac{\partial\vy_{3}^{+}}{\partial\vy_{3}}&
\frac{\partial\vy_{3}^{+}}{\partial\vlambda_{2}}&
\frac{\partial\vy_{3}^{+}}{\partial\vlambda_{3}}
\\
\frac{\partial\lambda_{2}^{+}}{\partial\vx_{1}}&
\frac{\partial\lambda_{2}^{+}}{\partial\vy_{1}}&
\frac{\partial\lambda_{2}^{+}}{\partial\vx_{2}}&
\frac{\partial\lambda_{2}^{+}}{\partial\vy_{2}}&
\frac{\partial\lambda_{2}^{+}}{\partial\vx_{3}}&
\frac{\partial\lambda_{2}^{+}}{\partial\vy_{3}}&
\frac{\partial\lambda_{2}^{+}}{\partial\vlambda_{2}}&
\frac{\partial\lambda_{2}^{+}}{\partial\vlambda_{3}}
\\
\frac{\partial\lambda_{3}^{+}}{\partial\vx_{1}}&
\frac{\partial\lambda_{3}^{+}}{\partial\vy_{1}}&
\frac{\partial\lambda_{3}^{+}}{\partial\vx_{2}}&
\frac{\partial\lambda_{3}^{+}}{\partial\vy_{2}}&
\frac{\partial\lambda_{3}^{+}}{\partial\vx_{3}}&
\frac{\partial\lambda_{3}^{+}}{\partial\vy_{3}}&
\frac{\partial\lambda_{3}^{+}}{\partial\vlambda_{2}}&
\frac{\partial\lambda_{3}^{+}}{\partial\vlambda_{3}}
\end{bmatrix}.
\label{def:Dg}
\end{align}

At a fixed point, the optimality conditions of \eqref{eqn:algorithm} and $\calL(\cdot)$ in \eqref{eqn:Lagrange} read:
\begin{align}
\nabla_{\vx}f_{1}(\vx_{1}^{+},\vy_{1})-\vlambda_{2}-\vlambda_{3}+\rho(2\vx_{1}^{+}-\vx_{2}-\vx_{3})+\frac{1}{\eta}(\nabla_{\vx} h_{1}(\vx_{1}^{+},\vy_1)-\nabla_{\vx} h_{1}(\vx_{1},\vy_1))&=\vzero\label{eqn:optu1}
\\
\nabla_{\vy}f_{1}(\vx_{1}^{+},\vy_{1}^{+})+\frac{1}{\eta}(\nabla_{\vy} h_{1}(\vx_1^+,\vy_{1}^{+})-\nabla_{\vy} h_{1}(\vx_1^+,\vy_{1}))&=\vzero\label{eqn:optv1}
\\
\nabla_{\vx}f_{2}(\vx_{2}^{+},\vy_{2})+\vlambda_{2}+\rho(\vx_{2}^{+}-\vx_{1}^{+})+\frac{1}{\eta}(\nabla_{\vx} h_2(\vx_{2}^{+},\vy_2)-\nabla_{\vx} h_{2}(\vx_{2},\vy_2))&=\vzero\label{eqn:optu2}
\\
\nabla_{\vy}f_{2}(\vx_{2}^{+},\vy_{2}^{+})+\frac{1}{\eta}(\nabla_{\vy} h_{2}(\vx_2^+,\vy_{2}^{+})-\nabla_{\vy} h_{2}(\vx_2^+,\vy_{2}))&=\vzero\label{eqn:optv2}
\\
\nabla_{\vx}f_{3}(\vx_{3}^{+},\vy_{3})+\vlambda_{3}+\rho(\vx_{3}^{+}-\vx_{1}^{+})+\frac{1}{\eta}(\nabla_{\vx} h_{3}(\vx_{3}^{+},\vy_3)-\nabla_{\vx} h_{3}(\vx_{3},\vy_3))&=\vzero\label{eqn:optu3}
\\
\nabla_{\vy}f_{3}(\vx_{3}^{+},\vy_{3}^{+})+\frac{1}{\eta}(\nabla_{\vy} h_{3}(\vx_3^+,\vy_{3}^{+})-\nabla_{\vy} h_{3}(\vx_3^+,\vy_{3}))&=\vzero\label{eqn:optv3}
\\
- \vlambda_{2}^{+} +\vlambda_{2}  +\rho(\vx_{2}^{+}-\vx_{1}^{+} ) &=\vzero\label{eqn:optlam2}
 \\
- \vlambda_{3}^{+} +\vlambda_{3}  +\rho(\vx_{3}^{+}-\vx_{1}^{+} ) &=\vzero\label{eqn:optlam3}
\end{align}

Before applying the implicit function theorem, to simplify notations, we define
\begin{align}
\mGamma_{\vx\vx}^{1}&\doteq\nabla_{\vx\vx}^{2}f_1(\vx_{1}^{+},\vy_{1})+2\rho\eye+\frac{1}{\eta}\nabla_{\vx\vx}^{2}h_{1}(\vx_{1}^{+},\vy_1),& 	\mF_{\vx\vy}^{1}&\doteq \nabla_{\vx\vy}^{2}f_{1}(\vx_{1}^{+},\vy_{1}), &\mH_{\vx\vx}^{1}&\doteq\nabla^{2}_{\vx\vx}h_{1}(\vx_{1},\vy_1),
\nn\\
\mGamma_{\vy\vy}^{1}&\doteq\nabla_{\vy\vy}^{2}f_{1}(\vx_{1}^{+},\vy^{+}_{1})+\frac{1}{\eta}\nabla_{\vy\vy}^{2}h_{1}(\vx_{1}^{+},\vy_{1}^{+}),& \mF_{\vy\vx}^{1}&\doteq\nabla_{\vy\vx}^{2}f_{1}(\vx_{1}^{+},\vy^{+}_{1}), &\mH_{\vy\vy}^{1}&\doteq\nabla^{2}_{\vy\vy} h_{1}(\vx_{1}^{+},\vy_{1}),
\nn\\
\mGamma_{\vx\vx}^{2}&\doteq\nabla_{\vx\vx}^{2}f_{2}(\vx_{2}^{+},\vy_{2})+\rho\eye+\frac{1}{\eta}\nabla^{2}_{\vx\vx}h_{2}(\vx_{2}^{+},\vy_{2}),& \mF_{\vx\vy}^{2}&\doteq\nabla_{\vx\vy}^{2}f_{2}(\vx_{2}^{+},\vy_{2}), &\mH_{\vx\vx}^{2}&\doteq \nabla^{2}_{\vx\vx}h_{2}(\vx_{2},\vy_{2}),
\nn\\
\mGamma_{\vy\vy}^{2}&\doteq \nabla_{\vy\vy}^{2}f_{2}(\vx_{2}^{+},\vy^{+}_{2})+\frac{1}{\eta}\nabla^{2}_{\vy\vy}h_{2}(\vx_{2}^{+},\vy_{2}^{+}), &\mF_{\vy\vx}^{2}&\doteq \nabla_{\vy\vx}^{2}f_{2}(\vx_{2}^{+},\vy^{+}_{2}), &\mH_{\vy\vy}^{2}&\doteq \nabla^{2}_{\vy\vy}h_{2}(\vx_{2}^{+},\vy_{2}),
\nn\\
\mGamma_{\vx\vx}^{3}&\doteq \nabla_{\vx\vx}^{2}f_{3}(\vx_{3}^{+},\vy_{3})+\rho\eye+\frac{1}{\eta}\nabla^{2}_{\vx\vx}h_{3}(\vx_{3}^{+},\vy_{3}), &\mF_{\vx\vy}^{3}&\doteq \nabla_{\vx\vy}^{2}f_{3}(\vx_{3}^{+},\vy_{3}), & \mH_{\vx\vx}^{3}&\doteq \nabla^{2}_{\vx\vx}h_{3}(\vx_{3},\vy_{3}),
\nn\\
\mGamma_{\vy\vy}^{3} &\doteq \nabla_{\vy\vy}^{2}f_{3}(\vx_{3}^{+},\vy^{+}_{3})+\frac{1}{\eta}\nabla^{2}_{\vy\vy}h_{3}(\vx_{3}^{+},\vy_{3}^{+}), & \mF_{\vy\vx}^{3} &\doteq \nabla_{\vy\vx}^{2}f_{3}(\vx_{3}^{+},\vy^{+}_{3}), & \mH_{\vy\vy}^{3} &\doteq \nabla^{2}_{\vy\vy} h_{3}(\vx_{3}^{+},\vy_{3}).
\nn\\
\mF_{\vx\vx}^{j}&\doteq\nabla_{\vx\vx}^{2}f_{j}(\vx_{j}^{+},\vy_{j}),& \mF_{\vy\vy}^{j}&\doteq\nabla_{\vy\vy}^{2}f_{j}(\vx_{j}^{+},\vy_{j}^{+}),  & \forall j &=1,2,3. &&
\label{definition:notations:1}
\end{align}
Note that $\mH_{\vx\vx}^{j}=\nabla^{2}_{\vx\vx}h_{j}(\vx_{j},\vy_{j})$ is evaluated at the \emph{old} iterate $(\vx_j,\vy_j)$, whereas $\mH_{\vy\vy}^{j}=\nabla^{2}_{\vy\vy}h_{j}(\vx_{j}^{+},\vy_{j})$ is evaluated at $(\vx_j^{+},\vy_j)$, i.e., with the \emph{already-updated} $\vx_j^{+}$. This asymmetry reflects the Gauss--Seidel ordering in \Cref{alg:cadmm}: the $\vx_j$-subproblem uses the bi-Bregman divergence $D^{x}_{h_j}(\vx_j,\vx_j^{k-1};\vy_j^{k-1})$ whose anchor point is $(\vx_j^{k-1},\vy_j^{k-1})=(\vx_j,\vy_j)$, while the subsequent $\vy_j$-subproblem uses $D^{y}_{h_j}(\vy_j,\vy_j^{k-1};\vx_j^{k})$ whose anchor point is $(\vx_j^{k},\vy_j^{k-1})=(\vx_j^{+},\vy_j)$.

\bigskip

\noindent
\begin{minipage}[t]{0.31\textwidth}
\footnotesize
\textbf{Apply implicit function theorem to \eqref{eqn:optu1}:}
\begin{align}
\mGamma_{\vx\vx}^{1}\frac{\partial\vx_{1}^{+}}{\partial\vx_{1}}&=\frac{1}{\eta}\mH_{\vx\vx}^{1}
\nn
\\
\mGamma_{\vx\vx}^{1}\frac{\partial\vx_{1}^{+}}{\partial\vy_{1}}&=-\mF_{\vx\vy}^{1}
\nn
\\
\mGamma_{\vx\vx}^{1}\frac{\partial\vx_{1}^{+}}{\partial\vx_{2}}&=\rho\eye
\nn
\\
\mGamma_{\vx\vx}^{1}\frac{\partial\vx_{1}^{+}}{\partial\vy_{2}}&=\mzero
\nn
\\
\mGamma_{\vx\vx}^{1}\frac{\partial\vx_{1}^{+}}{\partial\vx_{3}}&=\rho\eye
\nn
\\
\mGamma_{\vx\vx}^{1}\frac{\partial\vx_{1}^{+}}{\partial\vy_{3}}&=\mzero
\nn
\\
\mGamma_{\vx\vx}^{1}\frac{\partial\vx_{1}^{+}}{\partial\vlambda_{2}}&=\eye
\nn
\\
\mGamma_{\vx\vx}^{1}\frac{\partial\vx_{1}^{+}}{\partial\vlambda_{3}}&=\eye
\label{eqn:u1lam3}
\end{align}
\end{minipage}\hfill
\begin{minipage}[t]{0.31\textwidth}
\footnotesize
\textbf{Apply implicit function theorem to \eqref{eqn:optv1}:}
\begin{align}
\mF_{\vy\vx}^{1}\frac{\partial\vx_{1}^{+}}{\partial\vx_{1}}+
\mGamma_{\vy\vy}^{1}\frac{\partial\vy_{1}^{+}}{\partial\vx_{1}}&=\mzero
\nn
\\
\mF_{\vy\vx}^{1}\frac{\partial\vx_{1}^{+}}{\partial\vy_{1}}+
\mGamma_{\vy\vy}^{1}\frac{\partial\vy_{1}^{+}}{\partial\vy_{1}}&=\frac{\mH_{\vy\vy}^{1}}{\eta}
\nn
\\
\mF_{\vy\vx}^{1}\frac{\partial\vx_{1}^{+}}{\partial\vx_{2}}+
\mGamma_{\vy\vy}^{1}\frac{\partial\vy_{1}^{+}}{\partial\vx_{2}}&=\mzero
\nn
\\
\mF_{\vy\vx}^{1}\frac{\partial\vx_{1}^{+}}{\partial\vy_{2}}+
\mGamma_{\vy\vy}^{1}\frac{\partial\vy_{1}^{+}}{\partial\vy_{2}}&=\mzero
\nn
\\
\mF_{\vy\vx}^{1}\frac{\partial\vx_{1}^{+}}{\partial\vx_{3}}+
\mGamma_{\vy\vy}^{1}\frac{\partial\vy_{1}^{+}}{\partial\vx_{3}}&=\mzero
\nn
\\
\mF_{\vy\vx}^{1}\frac{\partial\vx_{1}^{+}}{\partial\vy_{3}}+
\mGamma_{\vy\vy}^{1}\frac{\partial\vy_{1}^{+}}{\partial\vy_{3}}&=\mzero
\nn
\\
\mF_{\vy\vx}^{1}\frac{\partial\vx_{1}^{+}}{\partial\vlambda_{2}}+
\mGamma_{\vy\vy}^{1}\frac{\partial\vy_{1}^{+}}{\partial\vlambda_{2}}&=\mzero
\nn
\\
\mF_{\vy\vx}^{1}\frac{\partial\vx_{1}^{+}}{\partial\vlambda_{3}}+
\mGamma_{\vy\vy}^{1}\frac{\partial\vy_{1}^{+}}{\partial\vlambda_{3}}&=\mzero
\label{eqn:v1lam3}
\end{align}
\end{minipage}\hfill
\begin{minipage}[t]{0.31\textwidth}
\footnotesize
\textbf{Apply implicit function theorem to \eqref{eqn:optu2}:}
\begin{align}
-\rho \frac{\partial \vx_{1}^{+}}{\partial\vx_{1}}+\mGamma_{\vx\vx}^{2}\frac{\partial \vx_{2}^{+}}{\partial\vx_{1}}&=\mzero
\nn
\\
-\rho \frac{\partial \vx_{1}^{+}}{\partial\vy_{1}}+\mGamma_{\vx\vx}^{2}\frac{\partial \vx_{2}^{+}}{\partial\vy_{1}}&=\mzero
\nn
\\
-\rho \frac{\partial \vx_{1}^{+}}{\partial\vx_{2}}+\mGamma_{\vx\vx}^{2}\frac{\partial \vx_{2}^{+}}{\partial\vx_{2}}&=\frac{\mH_{\vx\vx}^{2}}{\eta}
\nn
\\
-\rho \frac{\partial \vx_{1}^{+}}{\partial\vy_{2}}+\mGamma_{\vx\vx}^{2}\frac{\partial \vx_{2}^{+}}{\partial\vy_{2}}&=-\mF_{\vx\vy}^{2}
\nn
\\
-\rho \frac{\partial \vx_{1}^{+}}{\partial\vx_{3}}+\mGamma_{\vx\vx}^{2}\frac{\partial \vx_{2}^{+}}{\partial\vx_{3}}&=\mzero
\nn
\\
-\rho \frac{\partial \vx_{1}^{+}}{\partial\vy_{3}}+\mGamma_{\vx\vx}^{2}\frac{\partial \vx_{2}^{+}}{\partial\vy_{3}}&=\mzero
\nn
\\
-\rho \frac{\partial \vx_{1}^{+}}{\partial\vlambda_{2}}+\mGamma_{\vx\vx}^{2}\frac{\partial \vx_{2}^{+}}{\partial\vlambda_{2}}&=-\eye
\nn
\\
-\rho \frac{\partial \vx_{1}^{+}}{\partial\vlambda_{3}}+\mGamma_{\vx\vx}^{2}\frac{\partial \vx_{2}^{+}}{\partial\vlambda_{3}}&=\mzero.
\label{eqn:u2lam3}
\end{align}
\end{minipage}

\bigskip

\noindent
\begin{minipage}[t]{0.31\textwidth}
\footnotesize
\textbf{Apply implicit function theorem to \eqref{eqn:optv2}:}
\begin{align}
\mF_{\vy\vx}^{2}\frac{\partial\vx_{2}^{+}}{\partial\vx_{1}}+
\mGamma_{\vy\vy}^{2}\frac{\partial\vy_{2}^{+}}{\partial\vx_{1}}&=\mzero
\nn
\\
\mF_{\vy\vx}^{2}\frac{\partial\vx_{2}^{+}}{\partial\vy_{1}}+
\mGamma_{\vy\vy}^{2}\frac{\partial\vy_{2}^{+}}{\partial\vy_{1}}&=\mzero
\nn
\\
\mF_{\vy\vx}^{2}\frac{\partial\vx_{2}^{+}}{\partial\vx_{2}}+
\mGamma_{\vy\vy}^{2}\frac{\partial\vy_{2}^{+}}{\partial\vx_{2}}&=\mzero
\nn
\\
\mF_{\vy\vx}^{2}\frac{\partial\vx_{2}^{+}}{\partial\vy_{2}}+
\mGamma_{\vy\vy}^{2}\frac{\partial\vy_{2}^{+}}{\partial\vy_{2}}&=\frac{\mH_{\vy\vy}^{2}}{\eta}
\nn
\\
\mF_{\vy\vx}^{2}\frac{\partial\vx_{2}^{+}}{\partial\vx_{3}}+
\mGamma_{\vy\vy}^{2}\frac{\partial\vy_{2}^{+}}{\partial\vx_{3}}&=\mzero
\nn
\\
\mF_{\vy\vx}^{2}\frac{\partial\vx_{2}^{+}}{\partial\vy_{3}}+
\mGamma_{\vy\vy}^{2}\frac{\partial\vy_{2}^{+}}{\partial\vy_{3}}&=\mzero
\nn
\\
\mF_{\vy\vx}^{2}\frac{\partial\vx_{2}^{+}}{\partial\vlambda_{2}}+
\mGamma_{\vy\vy}^{2}\frac{\partial\vy_{2}^{+}}{\partial\vlambda_{2}}&=\mzero
\nn
\\
\mF_{\vy\vx}^{2}\frac{\partial\vx_{2}^{+}}{\partial\vlambda_{3}}+
\mGamma_{\vy\vy}^{2}\frac{\partial\vy_{2}^{+}}{\partial\vlambda_{3}}&=\mzero.
\label{eqn:v2lam3}
\end{align}
\end{minipage}\hfill
\begin{minipage}[t]{0.31\textwidth}
\footnotesize
\textbf{Apply implicit function theorem to \eqref{eqn:optu3}:}
\begin{align}
-\rho \frac{\partial \vx_{1}^{+}}{\partial\vx_{1}}+\mGamma_{\vx\vx}^{3}\frac{\partial \vx_{3}^{+}}{\partial\vx_{1}}&=\mzero
\nn
\\
-\rho \frac{\partial \vx_{1}^{+}}{\partial\vy_{1}}+\mGamma_{\vx\vx}^{3}\frac{\partial \vx_{3}^{+}}{\partial\vy_{1}}&=\mzero
\nn
\\
-\rho \frac{\partial \vx_{1}^{+}}{\partial\vx_{2}}+\mGamma_{\vx\vx}^{3}\frac{\partial \vx_{3}^{+}}{\partial\vx_{2}}&=\mzero
\nn
\\
-\rho \frac{\partial \vx_{1}^{+}}{\partial\vy_{2}}+\mGamma_{\vx\vx}^{3}\frac{\partial \vx_{3}^{+}}{\partial\vy_{2}}&= \mzero
\nn
\\
-\rho \frac{\partial \vx_{1}^{+}}{\partial\vx_{3}}+\mGamma_{\vx\vx}^{3}\frac{\partial \vx_{3}^{+}}{\partial\vx_{3}}&=\frac{\mH_{\vx\vx}^{3}}{\eta}
\nn
\\
-\rho \frac{\partial \vx_{1}^{+}}{\partial\vy_{3}}+\mGamma_{\vx\vx}^{3}\frac{\partial \vx_{3}^{+}}{\partial\vy_{3}}&=-\mF_{\vx\vy}^{3}
\nn
\\
-\rho \frac{\partial \vx_{1}^{+}}{\partial\vlambda_{2}}+\mGamma_{\vx\vx}^{3}\frac{\partial \vx_{3}^{+}}{\partial\vlambda_{2}}&=\mzero
\nn
\\
-\rho \frac{\partial \vx_{1}^{+}}{\partial\vlambda_{3}}+\mGamma_{\vx\vx}^{3}\frac{\partial \vx_{3}^{+}}{\partial\vlambda_{3}}&=-\eye.\label{eqn:u3lam3}
\end{align}
\end{minipage}\hfill
\begin{minipage}[t]{0.31\textwidth}
\footnotesize
\textbf{Apply implicit function theorem to \eqref{eqn:optv3}:}
\begin{align}
\mF_{\vy\vx}^{3}\frac{\partial\vx_{3}^{+}}{\partial\vx_{1}}+
\mGamma_{\vy\vy}^{3}\frac{\partial\vy_{3}^{+}}{\partial\vx_{1}}&=\mzero
\nn
\\
\mF_{\vy\vx}^{3}\frac{\partial\vx_{3}^{+}}{\partial\vy_{1}}+
\mGamma_{\vy\vy}^{3}\frac{\partial\vy_{3}^{+}}{\partial\vy_{1}}&=\mzero
\nn
\\
\mF_{\vy\vx}^{3}\frac{\partial\vx_{3}^{+}}{\partial\vx_{2}}+
\mGamma_{\vy\vy}^{3}\frac{\partial\vy_{3}^{+}}{\partial\vx_{2}}&=\mzero
\nn
\\
\mF_{\vy\vx}^{3}\frac{\partial\vx_{3}^{+}}{\partial\vy_{2}}+
\mGamma_{\vy\vy}^{3}\frac{\partial\vy_{3}^{+}}{\partial\vy_{2}}&=\mzero
\nn
\\
\mF_{\vy\vx}^{3}\frac{\partial\vx_{3}^{+}}{\partial\vx_{3}}+
\mGamma_{\vy\vy}^{3}\frac{\partial\vy_{3}^{+}}{\partial\vx_{3}}&=\mzero
\nn
\\
\mF_{\vy\vx}^{3}\frac{\partial\vx_{3}^{+}}{\partial\vy_{3}}+
\mGamma_{\vy\vy}^{3}\frac{\partial\vy_{3}^{+}}{\partial\vy_{3}}&=\frac{\mH_{\vy\vy}^{3}}{\eta}
\nn
\\
\mF_{\vy\vx}^{3}\frac{\partial\vx_{3}^{+}}{\partial\vlambda_{2}}+
\mGamma_{\vy\vy}^{3}\frac{\partial\vy_{3}^{+}}{\partial\vlambda_{2}}&=\mzero
\nn
\\
\mF_{\vy\vx}^{3}\frac{\partial\vx_{3}^{+}}{\partial\vlambda_{3}}+
\mGamma_{\vy\vy}^{3}\frac{\partial\vy_{3}^{+}}{\partial\vlambda_{3}}&=\mzero.
\label{eqn:v3lam3}
\end{align}
\end{minipage}

\bigskip

\noindent
\begin{minipage}[t]{0.31\textwidth}
\footnotesize
\textbf{Apply implicit function theorem to \eqref{eqn:optlam2}:}
\begin{align}
-\frac{\partial\vlambda_{2}^{+}}{\partial\vx_{1}}+\rho\frac{\partial\vx_{2}^{+}}{\partial\vx_{1}}-\rho\frac{\partial\vx_{1}^{+}}{\partial\vx_{1}}&=\mzero
\nn
\\
-\frac{\partial\vlambda_{2}^{+}}{\partial\vy_{1}}+\rho\frac{\partial\vx_{2}^{+}}{\partial\vy_{1}}-\rho\frac{\partial\vx_{1}^{+}}{\partial\vy_{1}}&=\mzero
\nn
\\
-\frac{\partial\vlambda_{2}^{+}}{\partial\vx_{2}}+\rho\frac{\partial\vx_{2}^{+}}{\partial\vx_{2}}-\rho\frac{\partial\vx_{1}^{+}}{\partial\vx_{2}}&=\mzero
\nn
\\
-\frac{\partial\vlambda_{2}^{+}}{\partial\vy_{2}}+\rho\frac{\partial\vx_{2}^{+}}{\partial\vy_{2}}-\rho\frac{\partial\vx_{1}^{+}}{\partial\vy_{2}}&=\mzero
\nn
\\
-\frac{\partial\vlambda_{2}^{+}}{\partial\vx_{3}}+\rho\frac{\partial\vx_{2}^{+}}{\partial\vx_{3}}-\rho\frac{\partial\vx_{1}^{+}}{\partial\vx_{3}}&=\mzero
\nn
\\
-\frac{\partial\vlambda_{2}^{+}}{\partial\vy_{3}}+\rho\frac{\partial\vx_{2}^{+}}{\partial\vy_{3}}-\rho\frac{\partial\vx_{1}^{+}}{\partial\vy_{3}}&=\mzero
\nn
\\
-\frac{\partial\vlambda_{2}^{+}}{\partial\vlambda_{2}}+\rho\frac{\partial\vx_{2}^{+}}{\partial\vlambda_{2}}-\rho\frac{\partial\vx_{1}^{+}}{\partial\vlambda_{2}}&=-\eye
\nn
\\
-\frac{\partial\vlambda_{2}^{+}}{\partial\vlambda_{3}}+\rho\frac{\partial\vx_{2}^{+}}{\partial\vlambda_{3}}-\rho\frac{\partial\vx_{1}^{+}}{\partial\vlambda_{3}}&=\mzero.
\label{eqn:lam2lam3}
\end{align}
\end{minipage}\hfill
\begin{minipage}[t]{0.31\textwidth}
\footnotesize
\textbf{Apply implicit function theorem to \eqref{eqn:optlam3}:}
\begin{align}
-\frac{\partial\vlambda_{3}^{+}}{\partial\vx_{1}}+\rho\frac{\partial\vx_{3}^{+}}{\partial\vx_{1}}-\rho\frac{\partial\vx_{1}^{+}}{\partial\vx_{1}}&=\mzero
\nn
\\
-\frac{\partial\vlambda_{3}^{+}}{\partial\vy_{1}}+\rho\frac{\partial\vx_{3}^{+}}{\partial\vy_{1}}-\rho\frac{\partial\vx_{1}^{+}}{\partial\vy_{1}}&=\mzero
\nn
\\
-\frac{\partial\vlambda_{3}^{+}}{\partial\vx_{2}}+\rho\frac{\partial\vx_{3}^{+}}{\partial\vx_{2}}-\rho\frac{\partial\vx_{1}^{+}}{\partial\vx_{2}}&=\mzero
\nn
\\
-\frac{\partial\vlambda_{3}^{+}}{\partial\vy_{2}}+\rho\frac{\partial\vx_{3}^{+}}{\partial\vy_{2}}-\rho\frac{\partial\vx_{1}^{+}}{\partial\vy_{2}}&=\mzero
\nn
\\
-\frac{\partial\vlambda_{3}^{+}}{\partial\vx_{3}}+\rho\frac{\partial\vx_{3}^{+}}{\partial\vx_{3}}-\rho\frac{\partial\vx_{1}^{+}}{\partial\vx_{3}}&=\mzero
\nn
\\
-\frac{\partial\vlambda_{3}^{+}}{\partial\vy_{3}}+\rho\frac{\partial\vx_{3}^{+}}{\partial\vy_{3}}-\rho\frac{\partial\vx_{1}^{+}}{\partial\vy_{3}}&=\mzero
\nn
\\
-\frac{\partial\vlambda_{3}^{+}}{\partial\vlambda_{2}}+\rho\frac{\partial\vx_{3}^{+}}{\partial\vlambda_{2}}-\rho\frac{\partial\vx_{1}^{+}}{\partial\vlambda_{2}}&=\mzero
\nn
\\
-\frac{\partial\vlambda_{3}^{+}}{\partial\vlambda_{3}}+\rho\frac{\partial\vx_{3}^{+}}{\partial\vlambda_{3}}-\rho\frac{\partial\vx_{1}^{+}}{\partial\vlambda_{3}}&=-\eye.
\label{eqn:lam3lam3}
\end{align}
\end{minipage}

\bigskip

Representing \eqref{eqn:u1lam3}-\eqref{eqn:lam3lam3} in matrix form, we further get
\begin{align}
&
\begin{bmatrix}
\mGamma_{\vx\vx}^{1}&
\mzero&
\mzero&
\mzero&
\mzero&
\mzero&
\mzero&
\mzero
\\
\mF_{\vy\vx}^{1}&
\mGamma_{\vy\vy}^{1}&
\mzero&
\mzero&
\mzero&
\mzero&
\mzero&
\mzero
\\
-\rho\eye&
\mzero&
\mGamma_{\vx\vx}^{2}&
\mzero&
\mzero&
\mzero&
\mzero&
\mzero
\\
\mzero&
\mzero&
\mF_{\vy\vx}^{2}&
\mGamma_{\vy\vy}^{2}&
\mzero&
\mzero&
\mzero&
\mzero
\\
-\rho\eye&
\mzero&
\mzero&
\mzero&
\mGamma_{\vx\vx}^{3}&
\mzero&
\mzero&
\mzero
\\
\mzero&
\mzero&
\mzero&
\mzero&
\mF_{\vy\vx}^{3}&
\mGamma_{\vy\vy}^{3}&
\mzero&
\mzero
\\
-\rho\eye&
\mzero&
\rho\eye&
\mzero&
\mzero&
\mzero&
-\eye&
\mzero
\\
-\rho\eye&
\mzero&
\mzero&
\mzero&
\rho\eye&
\mzero&
\mzero&
-\eye
\end{bmatrix}
\begin{bmatrix}
\frac{\partial\vx_{1}^{+}}{\partial\vx_{1}}&
\frac{\partial\vx_{1}^{+}}{\partial\vy_{1}}&
\frac{\partial\vx_{1}^{+}}{\partial\vx_{2}}&
\frac{\partial\vx_{1}^{+}}{\partial\vy_{2}}&
\frac{\partial\vx_{1}^{+}}{\partial\vx_{3}}&
\frac{\partial\vx_{1}^{+}}{\partial\vy_{3}}&
\frac{\partial\vx_{1}^{+}}{\partial\vlambda_{2}}&
\frac{\partial\vx_{1}^{+}}{\partial\vlambda_{3}}
\\
\frac{\partial\vy_{1}^{+}}{\partial\vx_{1}}&
\frac{\partial\vy_{1}^{+}}{\partial\vy_{1}}&
\frac{\partial\vy_{1}^{+}}{\partial\vx_{2}}&
\frac{\partial\vy_{1}^{+}}{\partial\vy_{2}}&
\frac{\partial\vy_{1}^{+}}{\partial\vx_{3}}&
\frac{\partial\vy_{1}^{+}}{\partial\vy_{3}}&
\frac{\partial\vy_{1}^{+}}{\partial\vlambda_{2}}&
\frac{\partial\vy_{1}^{+}}{\partial\vlambda_{3}}
\\
\frac{\partial\vx_{2}^{+}}{\partial\vx_{1}}&
\frac{\partial\vx_{2}^{+}}{\partial\vy_{1}}&
\frac{\partial\vx_{2}^{+}}{\partial\vx_{2}}&
\frac{\partial\vx_{2}^{+}}{\partial\vy_{2}}&
\frac{\partial\vx_{2}^{+}}{\partial\vx_{3}}&
\frac{\partial\vx_{2}^{+}}{\partial\vy_{3}}&
\frac{\partial\vx_{2}^{+}}{\partial\vlambda_{2}}&
\frac{\partial\vx_{2}^{+}}{\partial\vlambda_{3}}
\\
\frac{\partial\vy_{2}^{+}}{\partial\vx_{1}}&
\frac{\partial\vy_{2}^{+}}{\partial\vy_{1}}&
\frac{\partial\vy_{2}^{+}}{\partial\vx_{2}}&
\frac{\partial\vy_{2}^{+}}{\partial\vy_{2}}&
\frac{\partial\vy_{2}^{+}}{\partial\vx_{3}}&
\frac{\partial\vy_{2}^{+}}{\partial\vy_{3}}&
\frac{\partial\vy_{2}^{+}}{\partial\vlambda_{2}}&
\frac{\partial\vy_{2}^{+}}{\partial\vlambda_{3}}
\\
\frac{\partial\vx_{3}^{+}}{\partial\vx_{1}}&
\frac{\partial\vx_{3}^{+}}{\partial\vy_{1}}&
\frac{\partial\vx_{3}^{+}}{\partial\vx_{2}}&
\frac{\partial\vx_{3}^{+}}{\partial\vy_{2}}&
\frac{\partial\vx_{3}^{+}}{\partial\vx_{3}}&
\frac{\partial\vx_{3}^{+}}{\partial\vy_{3}}&
\frac{\partial\vx_{3}^{+}}{\partial\vlambda_{2}}&
\frac{\partial\vx_{3}^{+}}{\partial\vlambda_{3}}
\\
\frac{\partial\vy_{3}^{+}}{\partial\vx_{1}}&
\frac{\partial\vy_{3}^{+}}{\partial\vy_{1}}&
\frac{\partial\vy_{3}^{+}}{\partial\vx_{2}}&
\frac{\partial\vy_{3}^{+}}{\partial\vy_{2}}&
\frac{\partial\vy_{3}^{+}}{\partial\vx_{3}}&
\frac{\partial\vy_{3}^{+}}{\partial\vy_{3}}&
\frac{\partial\vy_{3}^{+}}{\partial\vlambda_{2}}&
\frac{\partial\vy_{3}^{+}}{\partial\vlambda_{3}}
\\
\frac{\partial\lambda_{2}^{+}}{\partial\vx_{1}}&
\frac{\partial\lambda_{2}^{+}}{\partial\vy_{1}}&
\frac{\partial\lambda_{2}^{+}}{\partial\vx_{2}}&
\frac{\partial\lambda_{2}^{+}}{\partial\vy_{2}}&
\frac{\partial\lambda_{2}^{+}}{\partial\vx_{3}}&
\frac{\partial\lambda_{2}^{+}}{\partial\vy_{3}}&
\frac{\partial\lambda_{2}^{+}}{\partial\vlambda_{2}}&
\frac{\partial\lambda_{2}^{+}}{\partial\vlambda_{3}}
\\
\frac{\partial\lambda_{3}^{+}}{\partial\vx_{1}}&
\frac{\partial\lambda_{3}^{+}}{\partial\vy_{1}}&
\frac{\partial\lambda_{3}^{+}}{\partial\vx_{2}}&
\frac{\partial\lambda_{3}^{+}}{\partial\vy_{2}}&
\frac{\partial\lambda_{3}^{+}}{\partial\vx_{3}}&
\frac{\partial\lambda_{3}^{+}}{\partial\vy_{3}}&
\frac{\partial\lambda_{3}^{+}}{\partial\vlambda_{2}}&
\frac{\partial\lambda_{3}^{+}}{\partial\vlambda_{3}}
\end{bmatrix}
\nonumber\\
=&
\begin{bmatrix}
\frac{\mH_{\vx\vx}^{1}}{\eta}&
-\mF_{\vx\vy}^{1}&
\rho\eye&
\mzero&
\rho\eye&
\mzero&
\eye&
\eye
\\
\mzero&
\frac{\mH_{\vy\vy}^{1}}{\eta}&
\mzero&
\mzero&
\mzero&
\mzero&
\mzero&
\mzero
\\
\mzero&
\mzero&
\frac{\mH_{\vx\vx}^{2}}{\eta}&
-\mF_{\vx\vy}^{2}&
\mzero&
\mzero&
-\eye&
\mzero
\\
\mzero&
\mzero&
\mzero&
\frac{\mH_{\vy\vy}^{2}}{\eta}&
\mzero&
\mzero&
\mzero&
\mzero
\\
\mzero&
\mzero&
\mzero&
\mzero&
\frac{\mH_{\vx\vx}^{3}}{\eta}&
-\mF_{\vx\vy}^{3}&
\mzero&
-\eye
\\
\mzero&
\mzero&
\mzero&
\mzero&
\mzero&
\frac{\mH_{\vy\vy}^{3}}{\eta}&
\mzero&
\mzero
\\
\mzero&
\mzero&
\mzero&
\mzero&
\mzero&
\mzero&
-\eye&
\mzero
\\
\mzero&
\mzero&
\mzero&
\mzero&
\mzero&
\mzero&
\mzero&
-\eye
\end{bmatrix}
\label{Dg:system0}
\end{align}
where  $\mGamma^{j}_{\vx\vx},\mGamma^{j}_{\vy\vy},\mF_{\vx\vy}^{j},\mF_{\vy\vx}^{j},\mF_{\vx\vx}^{j},\mF_{\vy\vy}^{j},\mH_{\vx\vx}^{j},\mH_{\vy\vy}^{j}$ for $j=1,2,3$ are defined in \eqref{definition:notations:1}.

By the relative smoothness assumption and stepsize choice of $\eta$, we have from \eqref{definition:notations:1} that 
\begin{align*}
\mGamma_{\vx\vx}^1&\doteq 2\rho\eye+\frac{1}{\eta}\mH_{\vx\vx}^1+\mF_{\vx\vx}^1\succ0,
\quad
\mGamma_{\vx\vx}^j\doteq \rho\eye+\frac{1}{\eta}\mH_{\vx\vx}^j+\mF_{\vx\vx}^j\succ0,~~\forall j\ge 2,
\quad
\mGamma_{\vy\vy}^j\doteq \frac{1}{\eta}\mH_{\vy\vy}^j+\mF_{\vy\vy}^j\succ0,~~\forall j\ge 1.
\end{align*}
Therefore, at any fixed point (where $\vx_j^+=\vx_j$ and $\vy_j^+=\vy_j$), we can compute the fixed-point Jacobian $Dg$ by solving the invertible linear system \eqref{Dg:system0}.
\begin{lemma}
\label{lem:Dg}
For the case $J=3$, suppose $g$ is the mapping function of \Cref{alg:cadmm}. At any fixed point $(\vx_1^\star,\vy_1^\star,\vx_2^\star,\vy_2^\star,\vx_3^\star,\vy_3^\star,\vlambda_2^\star,\vlambda_3^\star)$ where $\vx_j^+=\vx_j$ and $\vy_j^+=\vy_j$ for $j=1,2,3$, the Jacobian $Dg$ is given by
 \begin{align}
Dg=
&\begin{bmatrix}
\mGamma_{\vx\vx}^{1}&
\mzero&
\mzero&
\mzero&
\mzero&
\mzero&
\mzero&
\mzero
\\
\mF_{\vy\vx}^{1}&
\mGamma_{\vy\vy}^{1}&
\mzero&
\mzero&
\mzero&
\mzero&
\mzero&
\mzero
\\
-\rho\eye&
\mzero&
\mGamma_{\vx\vx}^{2}&
\mzero&
\mzero&
\mzero&
\mzero&
\mzero
\\
\mzero&
\mzero&
\mF_{\vy\vx}^{2}&
\mGamma_{\vy\vy}^{2}&
\mzero&
\mzero&
\mzero&
\mzero
\\
-\rho\eye&
\mzero&
\mzero&
\mzero&
\mGamma_{\vx\vx}^{3}&
\mzero&
\mzero&
\mzero
\\
\mzero&
\mzero&
\mzero&
\mzero&
\mF_{\vy\vx}^{3}&
\mGamma_{\vy\vy}^{3}&
\mzero&
\mzero
\\
-\rho\eye&
\mzero&
\rho\eye&
\mzero&
\mzero&
\mzero&
-\eye&
\mzero
\\
-\rho\eye&
\mzero&
\mzero&
\mzero&
\rho\eye&
\mzero&
\mzero&
-\eye
\end{bmatrix}^{-1}
\begin{bmatrix}
\frac{\mH_{\vx\vx}^{1}}{\eta}&
-\mF_{\vx\vy}^{1}&
\rho\eye&
\mzero&
\rho\eye&
\mzero&
\eye&
\eye
\\
\mzero&
\frac{\mH_{\vy\vy}^{1}}{\eta}&
\mzero&
\mzero&
\mzero&
\mzero&
\mzero&
\mzero
\\
\mzero&
\mzero&
\frac{\mH_{\vx\vx}^{2}}{\eta}&
-\mF_{\vx\vy}^{2}&
\mzero&
\mzero&
-\eye&
\mzero
\\
\mzero&
\mzero&
\mzero&
\frac{\mH_{\vy\vy}^{2}}{\eta}&
\mzero&
\mzero&
\mzero&
\mzero
\\
\mzero&
\mzero&
\mzero&
\mzero&
\frac{\mH_{\vx\vx}^{3}}{\eta}&
-\mF_{\vx\vy}^{3}&
\mzero&
-\eye
\\
\mzero&
\mzero&
\mzero&
\mzero&
\mzero&
\frac{\mH_{\vy\vy}^{3}}{\eta}&
\mzero&
\mzero
\\
\mzero&
\mzero&
\mzero&
\mzero&
\mzero&
\mzero&
-\eye&
\mzero
\\
\mzero&
\mzero&
\mzero&
\mzero&
\mzero&
\mzero&
\mzero&
-\eye
\end{bmatrix}.
\label{eqn:Dg}
\end{align}
\end{lemma}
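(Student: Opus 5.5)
The plan is to derive \eqref{eqn:Dg} directly from the linear system \eqref{Dg:system0}. That system is simply the implicit-function-theorem identities \eqref{eqn:u1lam3}--\eqref{eqn:lam3lam3} stacked in matrix form. First I will check that the stacked equations are correct at a fixed point. Each optimality condition \eqref{eqn:optu1}--\eqref{eqn:optlam3} is differentiated with respect to the old state $(\vx_1,\vy_1,\ldots,\vlambda_3)$.

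The derivatives of the updated variables that appear, through the Gauss--Seidel ordering, are collected on the left; these include, for example, $\vx_1^+$ inside the $\vx_2$ and $\vx_3$ updates and inside the dual updates. All explicit dependence on the old state is collected on the right.

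At a fixed point, $\vx_j^+=\vx_j$ and $\vy_j^+=\vy_j$. Therefore derivatives of the bi-Bregman terms with respect to the anchor produce only the blocks $\frac{1}{\eta}\mH^j$. The mixed terms, for instance $\partial_{\vy}\nabla_{\vx}h_1(\vx_1^+,\vy_1)-\partial_{\vy}\nabla_{\vx}h_1(\vx_1,\vy_1)$, cancel. This gives exactly the right-hand matrix. One point needs care: the $\vy_j$-update anchor also depends on $\vx_j^+$. This again cancels at the fixed point, because $\nabla_{\vy\vx}h_j(\vx_j^+,\vy_j^+)-\nabla_{\vy\vx}h_j(\vx_j^+,\vy_j)=\mzero$. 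That cancellation is why the lower-left blocks contain only $\mF^j_{\vy\vx}$.

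Second, I will show that the left coefficient matrix, call it $\mM$, is invertible. It is block lower triangular in the ordering $(\vx_1,\vy_1,\vx_2,\vy_2,\vx_3,\vy_3,\vlambda_2,\vlambda_3)$. Its diagonal blocks are $\mGamma^1_{\vx\vx},\mGamma^1_{\vy\vy},\mGamma^2_{\vx\vx},\mGamma^2_{\vy\vy},\mGamma^3_{\vx\vx},\mGamma^3_{\vy\vy},-\eye,-\eye$. Hence
\[
\det(\mM)=\prod_{j=1}^3\det(\mGamma^j_{\vx\vx})\det(\mGamma^j_{\vy\vy}).
\]
Each factor is nonzero by the positive definiteness already established from relative bi-smoothness, strong bi-convexity, and $\eta<\min_j\min(1/L_j^x,1/L_j^y)$. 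The consensus penalties $2\rho\eye$ and $\rho\eye$ only add positive semidefinite terms.

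Third, I will conclude. The unknown matrix in \eqref{Dg:system0} is $Dg$ from \eqref{def:Dg}, whose entries exist and are continuous by the implicit function theorem, since each $\mGamma$ is nonsingular. So $\mM\,Dg=\mN$ with $\mN$ the right-hand matrix, and multiplying by $\mM^{-1}$ gives \eqref{eqn:Dg}.

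The main obstacle is the bookkeeping in the first step: confirming that every block on each side matches the Gauss--Seidel order. In particular, the $\vlambda$ rows must use $\vx_1^+$ and $\vx_j^+$ (giving the entries $-\rho\eye$ and $\rho\eye$ on the left), and the right-hand entries $\pm\eye$ must carry the correct signs from $\mp\vlambda_j$ in \eqref{eqn:optu1}, \eqref{eqn:optu2}, and \eqref{eqn:optu3}. I would verify these entries row by row against the listed identities.
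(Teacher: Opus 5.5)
Your proposal is correct and follows the same route as the paper. You stack the implicit-function-theorem identities \eqref{eqn:u1lam3}--\eqref{eqn:lam3lam3} into the linear system \eqref{Dg:system0}, use that the mixed bi-Bregman anchor terms vanish at the fixed point (including $\nabla^2_{\vy\vx}h_j(\vx_j^+,\vy_j^+)-\nabla^2_{\vy\vx}h_j(\vx_j^+,\vy_j)=\mzero$), and invert the block-lower-triangular left factor, whose diagonal blocks $\mGamma^j_{\vx\vx},\mGamma^j_{\vy\vy}\succ 0$ and $-\eye$ are nonsingular.
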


\paragraph{Showing that $\det(Dg)$ is nonzero globally.}
We decompose one full iteration of \Cref{alg:cadmm} into $2J+1$ elementary maps (for $J=3$: seven maps) and show each has a nonsingular Jacobian; then $\det(Dg) = \prod_k \det(Dg_k) \neq 0$ follows by the chain rule.

Define the elementary maps acting on the full state $(\vx_1,\vy_1,\vx_2,\vy_2,\vx_3,\vy_3,\vlambda_2,\vlambda_3)$:
\begin{itemize}[leftmargin=*]
  \item $h_{\vx_j}$: updates $\vx_j$ via the Bregman-proximal step \eqref{eqn:algorithm}, holding all other state variables fixed.
  \item $h_{\vy_j}$: updates $\vy_j$ via the bi-Bregman-proximal step in \eqref{eqn:algorithm}, using the already-updated $\vx_j^+$, holding all other variables fixed.
  \item $h_\vlambda$: dual ascent, updating $\vlambda_2,\vlambda_3$ using the already-updated primal variables.
\end{itemize}
Then 
\[
g = h_\vlambda \circ h_{\vy_3} \circ h_{\vx_3} \circ h_{\vy_2} \circ h_{\vx_2} \circ h_{\vy_1} \circ h_{\vx_1}
\]

\medskip
\noindent\textbf{Nonsingularity of $Dh_{\vx_j}$.}
The map $h_{\vx_j}$ only changes the $\vx_j$ component of the state, so $Dh_{\vx_j}$ is a block matrix with identity in every row except the $\vx_j$-row. Hence $\det(Dh_{\vx_j}) = \det\!\left(\tfrac{\partial \vx_j^+}{\partial \vx_j}\right)$.
From the implicit function theorem applied to the $\vx_j$ optimality condition (with all other variables treated as fixed parameters), differentiating with respect to $\vx_j$ gives
\[
  \frac{\partial \vx_j^+}{\partial \vx_j} = \bigl(\mGamma_{\vx\vx}^j\bigr)^{-1}\!\!\left(\tfrac{1}{\eta}\mH_{\vx\vx}^j\right),
\]
where $\mGamma_{\vx\vx}^j \succ 0$ by the relative bi-smoothness assumption and the stepsize condition, and $\mH_{\vx\vx}^j = \nabla^2_{\vx\vx}h_j(\vx_j,\vy_j) \succ 0$ by the strong convexity of $h_j$. Since $\tfrac{\partial \vx_j^+}{\partial \vx_j}$ is a product of two positive definite (hence nonsingular) matrices, $\det(Dh_{\vx_j}) \neq 0$.

\medskip
\noindent\textbf{Nonsingularity of $Dh_{\vy_j}$.}
By the same argument applied to the $\vy_j$ optimality condition, differentiating with respect to $\vy_j$ gives
\[
  \frac{\partial \vy_j^+}{\partial \vy_j} = \bigl(\mGamma_{\vy\vy}^j\bigr)^{-1}\!\!\left(\tfrac{1}{\eta}\mH_{\vy\vy}^j\right),
\]
where $\mGamma_{\vy\vy}^j \succ 0$ and $\mH_{\vy\vy}^j = \nabla^2_{\vy\vy}h_j(\vx_j^+,\vy_j) \succ 0$.
Hence $\det(Dh_{\vy_j}) = \det\!\left(\tfrac{\partial \vy_j^+}{\partial \vy_j}\right) \neq 0$.

\medskip
\noindent\textbf{Nonsingularity of $Dh_\vlambda$.}
The dual update is $\vlambda_j^+ = \vlambda_j + \rho(\vx_j^+ - \vx_1^+)$. Its full Jacobian is block triangular: the primal variables are left unchanged, and the derivative of each $\vlambda_j^+$ with respect to its own multiplier $\vlambda_j$ is $\eye$, while derivatives with respect to the updated primal variables appear only in lower-left blocks. Hence $\det(Dh_\vlambda)=1$, so $Dh_\vlambda$ is nonsingular.

\medskip
Since all seven elementary Jacobians are nonsingular, we conclude $\det(Dg) = \prod_k \det(Dh_k) \neq 0$ everywhere in the domain.

\paragraph{Strict-saddle KKT points.}
First, by \Cref{rem:stacked-definition}, let $\vz^\star\in\Omega$ be a strict-saddle KKT point of the consensus problem for $J=3$, with primal components $(\vx_1^\star,\vy_1^\star,\ldots,\vx_3^\star,\vy_3^\star)$ and multipliers $\vlambda_2^\star,\vlambda_3^\star$. Then there exist $\vd_{\vx},\vd_{\vy}^1,\vd_{\vy}^2,\vd_{\vy}^3$ such that
\begin{align}
&\begin{cases} \vx_1^\star=\vx_2^\star=\vx_3^\star,&
\\
	\nabla_{\vx}f_1(\vx_1^\star,\vy_1^\star)-\vlambda_2^\star-\vlambda_3^\star&=\vzero,
\\
	\nabla_{\vx}f_2(\vx_2^\star,\vy_2^\star)+\vlambda_2^\star&=\vzero,
\\
	\nabla_{\vx}f_3(\vx_3^\star,\vy_3^\star)+\vlambda_3^\star&=\vzero,
\\
\nabla_{\vy} f_j(\vx_j^\star,\vy_j^\star)&=\vzero,\quad\forall j\in\{1,2,3\},
\end{cases}\label{eqn:first}
\\
&\sum_{i=1}^3 [\vd_{\vx}^\top~\vd_{\vy}^{i\top}]\nabla^2f_i(\vx_i^\star,\vy_i^\star)\begin{bmatrix}
	\vd_{\vx} \\ \vd_{\vy}^i
\end{bmatrix}<0.
\label{eqn:second}
\end{align}
Now,  $(\vx_1^\star,\vy_1^\star,\cdots,\vx_3^\star,\vy_3^\star,\vlambda_2^\star,\vlambda_3^\star)$ is a fixed point of $g$ since \eqref{eqn:first} satisfies the fixed point equations \eqref{eqn:optu1}-\eqref{eqn:optlam3}.

It remains to show that the spectral radius of $Dg(\vx_{1}^{\star}, \vy_{1}^{\star},\vx_{2}^{\star}, \vy_{2}^{\star},\vx_{3}^{\star},\vy_{3}^{\star},\vlambda_{2}^{\star},\vlambda_{3}^{\star})$ is larger than 1.

Set
\begin{align}	
(\vx_{1}^{+}, \vy_{1}^{+},\vx_{2}^{+}, \vy_{2}^{+},\vx_{3}^{+},\vy_{3}^{+}, \vlambda_{2}^{+},\vlambda_{3}^{+})
=&(\vx_{1}, \vy_{1},\vx_{2}, \vy_{2},\vx_{3},\vy_{3}, \vlambda_{2},\vlambda_{3})
=(\vx_{1}^{\star}, \vy_{1}^{\star},\vx_{2}^{\star}, \vy_{2}^{\star},\vx_{3}^{\star},\vy_{3}^{\star},\vlambda_{2}^{\star},\vlambda_{3}^{\star}),
\label{eqn:evaluation}
\end{align}
and evaluate $Dg$ \eqref{eqn:Dg} at \eqref{eqn:evaluation}
{\small
\begin{align*}
& Dg =
\begin{bmatrix}
\mGamma_{\vx\vx}^{1}&
\mzero&
\mzero&
\mzero&
\mzero&
\mzero&
\mzero&
\mzero
\\
\mF_{\vy\vx}^{1}&
\mGamma_{\vy\vy}^{1}&
\mzero&
\mzero&
\mzero&
\mzero&
\mzero&
\mzero
\\
-\rho\eye&
\mzero&
\mGamma_{\vx\vx}^{2}&
\mzero&
\mzero&
\mzero&
\mzero&
\mzero
\\
\mzero&
\mzero&
\mF_{\vy\vx}^{2}&
\mGamma_{\vy\vy}^{2}&
\mzero&
\mzero&
\mzero&
\mzero
\\
-\rho\eye&
\mzero&
\mzero&
\mzero&
\mGamma_{\vx\vx}^{3}&
\mzero&
\mzero&
\mzero
\\
\mzero&
\mzero&
\mzero&
\mzero&
\mF_{\vy\vx}^{3}&
\mGamma_{\vy\vy}^{3}&
\mzero&
\mzero
\\
-\rho\eye&
\mzero&
\rho\eye&
\mzero&
\mzero&
\mzero&
-\eye&
\mzero
\\
-\rho\eye&
\mzero&
\mzero&
\mzero&
\rho\eye&
\mzero&
\mzero&
-\eye
\end{bmatrix}^{-1}
% \\
% &
\begin{bmatrix}
\frac{\mH_{\vx\vx}^{1}}{\eta}&
-\mF_{\vx\vy}^{1}&
\rho\eye&
\mzero&
\rho\eye&
\mzero&
\eye&
\eye
\\
\mzero&
\frac{\mH_{\vy\vy}^{1}}{\eta}&
\mzero&
\mzero&
\mzero&
\mzero&
\mzero&
\mzero
\\
\mzero&
\mzero&
\frac{\mH_{\vx\vx}^{2}}{\eta}&
-\mF_{\vx\vy}^{2}&
\mzero&
\mzero&
-\eye&
\mzero
\\
\mzero&
\mzero&
\mzero&
\frac{\mH_{\vy\vy}^{2}}{\eta}&
\mzero&
\mzero&
\mzero&
\mzero
\\
\mzero&
\mzero&
\mzero&
\mzero&
\frac{\mH_{\vx\vx}^{3}}{\eta}&
-\mF_{\vx\vy}^{3}&
\mzero&
-\eye
\\
\mzero&
\mzero&
\mzero&
\mzero&
\mzero&
\frac{\mH_{\vy\vy}^{3}}{\eta}&
\mzero&
\mzero
\\
\mzero&
\mzero&
\mzero&
\mzero&
\mzero&
\mzero&
-\eye&
\mzero
\\
\mzero&
\mzero&
\mzero&
\mzero&
\mzero&
\mzero&
\mzero&
-\eye
\end{bmatrix}
\\
=&\eye
-
\begin{bmatrix}
\mGamma_{\vx\vx}^{1}&
\mzero&
\mzero&
\mzero&
\mzero&
\mzero&
\mzero&
\mzero
\\
\mF_{\vy\vx}^{1}&
\mGamma_{\vy\vy}^{1}&
\mzero&
\mzero&
\mzero&
\mzero&
\mzero&
\mzero
\\
-\rho\eye&
\mzero&
\mGamma_{\vx\vx}^{2}&
\mzero&
\mzero&
\mzero&
\mzero&
\mzero
\\
\mzero&
\mzero&
\mF_{\vy\vx}^{2}&
\mGamma_{\vy\vy}^{2}&
\mzero&
\mzero&
\mzero&
\mzero
\\
-\rho\eye&
\mzero&
\mzero&
\mzero&
\mGamma_{\vx\vx}^{3}&
\mzero&
\mzero&
\mzero
\\
\mzero&
\mzero&
\mzero&
\mzero&
\mF_{\vy\vx}^{3}&
\mGamma_{\vy\vy}^{3}&
\mzero&
\mzero
\\
-\rho\eye&
\mzero&
\rho\eye&
\mzero&
\mzero&
\mzero&
-\eye&
\mzero
\\
-\rho\eye&
\mzero&
\mzero&
\mzero&
\rho\eye&
\mzero&
\mzero&
-\eye
\end{bmatrix}^{-1}
\begin{bmatrix}
\mF_{\vx\vx}^1+2\rho\eye&
\mF_{\vx\vy}^{1}&
-\rho\eye&
\mzero&
-\rho\eye&
\mzero&
-\eye&
-\eye
\\
\mF_{\vy\vx}^{1}&
\mF_{\vy\vy}^{1}&
\mzero&
\mzero&
\mzero&
\mzero&
\mzero&
\mzero
\\
-\rho\eye&
\mzero&
\mF_{\vx\vx}^2+\rho\eye&
\mF_{\vx\vy}^{2}&
\mzero&
\mzero&
\eye&
\mzero
\\
\mzero&
\mzero&
\mF_{\vy\vx}^2&
\mF_{\vy\vy}^2&
\mzero&
\mzero&
\mzero&
\mzero
\\
-\rho\eye&
\mzero&
\mzero&
\mzero&
\mF_{\vx\vx}^3+\rho\eye&
\mF_{\vx\vy}^{3}&
\mzero&
\eye
\\
\mzero&
\mzero&
\mzero&
\mzero&
\mF_{\vy\vx}^3&
\mF_{\vy\vy}^3&
\mzero&
\mzero
\\
-\rho\eye&
\mzero&
\rho\eye&
\mzero&
\mzero&
\mzero&
\mzero&
\mzero
\\
-\rho\eye&
\mzero&
\mzero&
\mzero&
\rho\eye&
\mzero&
\mzero&
\mzero
\end{bmatrix}
\\
\doteq& \eye- \mPhi.
\end{align*}}  
Here, in view of \eqref{definition:notations:1}, the second equality follows from that
\begin{align*}
\mGamma_{\vx\vx}^1\doteq 2\rho\eye+\frac{1}{\eta}\mH_{\vx\vx}^1+\mF_{\vx\vx}^1,
\qquad
\mGamma_{\vx\vx}^j\doteq \rho\eye+\frac{1}{\eta}\mH_{\vx\vx}^j+\mF_{\vx\vx}^j,~~\forall j\ge 2,
\qquad
\mGamma_{\vy\vy}^j\doteq \frac{1}{\eta}\mH_{\vy\vy}^j+\mF_{\vy\vy}^j,~~\forall j\ge 1.
\end{align*}
Therefore, to show that the spectral radius of $Dg$ is larger than 1, it suffices to show that $\mPhi$ has a real negative eigenvalue, which is further equivalent to
\[\det(\mPhi+\mu\eye)=0\quad\text{for some }\mu>0.\]
Write $Dg=\eye-\mPhi=\eye-\mM^{-1}\mN$, where $\mM$ is the block-lower-triangular matrix appearing with the inverse in the computation above and $\mN$ the remaining factor.
The first $\Leftrightarrow$ below uses $\mPhi+\mu\eye=\mM^{-1}\mN+\mu\eye=\mM^{-1}(\mN+\mu\mM)$, so $\det(\mPhi+\mu\eye)=\det(\mM^{-1})\det(\mN+\mu\mM)$. Since $\mM$ is block-lower-triangular with diagonal blocks built from $\mGamma_{\vx\vx}^j$ and $\mGamma_{\vy\vy}^j$ (positive definite by relative bi-smoothness and the stepsize condition) and identity blocks from the dual update, $\mM$ is nonsingular and $\det(\mM^{-1})\neq 0$, reducing the condition to $\det(\mN+\mu\mM)=0$.
The second $\Leftrightarrow$ symmetrizes $\mN+\mu \mM$ via a two-sided row-column scaling: blocks corresponding to the $y_j$-variables and the non-hub $x_j$-blocks ($j\ge 2$) are scaled by $s_2=\sqrt{1+\mu}$, and those corresponding to the Lagrangian dual variables $\vlambda_j$ by $s_1=\sqrt{(1+\mu)\rho}$; since all factors are positive, the zero-determinant condition is preserved.
That is,
{\footnotesize
\begin{align*}
&
\det(\mPhi+\mu\eye)=0
\\
&\iff
\\
&
\det
\begin{bmatrix}
\mF_{\vx\vx}^1+2\rho\eye+\mu\mGamma_{\vx\vx}^1&
\mF_{\vx\vy}^{1}&
-\rho\eye&
\mzero&
-\rho\eye&
\mzero&
-\eye&
-\eye
\\
(1+\mu)\mF_{\vy\vx}^{1}&
\mF_{\vy\vy}^{1}+\mu\mGamma_{\vy\vy}^1&
\mzero&
\mzero&
\mzero&
\mzero&
\mzero&
\mzero
\\
-(1+\mu)\rho\eye&
\mzero&
\mF_{\vx\vx}^2+\rho\eye+\mu\mGamma_{\vx\vx}^2&
\mF_{\vx\vy}^{2}&
\mzero&
\mzero&
\eye&
\mzero
\\
\mzero&
\mzero&
(1+\mu)\mF_{\vy\vx}^2&
\mF_{\vy\vy}^2+\mu\mGamma_{\vy\vy}^2&
\mzero&
\mzero&
\mzero&
\mzero
\\
-(1+\mu)\rho\eye&
\mzero&
\mzero&
\mzero&
\mF_{\vx\vx}^3+\rho\eye+\mu\mGamma_{\vx\vx}^3&
\mF_{\vx\vy}^{3}&
\mzero&
\eye
\\
\mzero&
\mzero&
\mzero&
\mzero&
(1+\mu)\mF_{\vy\vx}^3&
\mF_{\vy\vy}^3+\mu\mGamma_{\vy\vy}^3&
\mzero&
\mzero
\\
-(1+\mu)\rho\eye&
\mzero&
(1+\mu)\rho\eye&
\mzero&
\mzero&
\mzero&
-\mu\eye&
\mzero
\\
-(1+\mu)\rho\eye&
\mzero&
\mzero&
\mzero&
(1+\mu)\rho\eye&
\mzero&
\mzero&
-\mu\eye
\end{bmatrix}	
=0
\\
&\iff
\\
 &
\det
\underbrace{\begin{bmatrix}
\mF_{\vx\vx}^1+2\rho\eye+\mu\mGamma_{\vx\vx}^1&
s_2\mF_{\vx\vy}^{1}&
-s_2\rho\eye&
\mzero&
-s_2\rho\eye&
\mzero&
-s_1\eye&
-s_1\eye
\\
s_2\mF_{\vy\vx}^{1}&
\mF_{\vy\vy}^{1}+\mu\mGamma_{\vy\vy}^1&
\mzero&
\mzero&
\mzero&
\mzero&
\mzero&
\mzero
\\
-s_2\rho\eye&
\mzero&
\mF_{\vx\vx}^2+\rho\eye+\mu\mGamma_{\vx\vx}^2&
s_2\mF_{\vx\vy}^{2}&
\mzero&
\mzero&
s_1\eye&
\mzero
\\
\mzero&
\mzero&
s_2\mF_{\vy\vx}^2&
\mF_{\vy\vy}^2+\mu\mGamma_{\vy\vy}^2&
\mzero&
\mzero&
\mzero&
\mzero
\\
-s_2\rho\eye&
\mzero&
\mzero&
\mzero&
\mF_{\vx\vx}^3+\rho\eye+\mu\mGamma_{\vx\vx}^3&
s_2\mF_{\vx\vy}^{3}&
\mzero&
s_1\eye
\\
\mzero&
\mzero&
\mzero&
\mzero&
s_2\mF_{\vy\vx}^3&
\mF_{\vy\vy}^3+\mu\mGamma_{\vy\vy}^3&
\mzero&
\mzero
\\
-s_1\eye&
\mzero&
s_1\eye&
\mzero&
\mzero&
\mzero&
-\mu\eye&
\mzero
\\
-s_1\eye&
\mzero&
\mzero&
\mzero&
s_1\eye&
\mzero&
\mzero&
-\mu\eye
\end{bmatrix}}_{\mJ(\mu)}	
=0
\end{align*}
}
where $s_1\doteq\sqrt{(1+\mu)\rho}$ and $s_2\doteq\sqrt{1+\mu}.$

Therefore, the problem reduces to showing $\mJ(\mu)$ is singular for some $\mu>0$.
The last two diagonal blocks of $\mJ(\mu)$ are $-\mu\eye$, so $\lambda_{\min}(\mJ(\mu))\to-\infty$ as $\mu\to\infty$ and a direct sign-change argument on $\lambda_{\min}(\mJ(\mu))$ fails; instead we Schur-reduce the dual block.

Since the bottom-right diagonal blocks of $\mJ(\mu)$ are $-\mu\eye$ (invertible for $\mu>0$), partition
\[
\mJ(\mu)=\begin{bmatrix}\mS(\mu) & \mT(\mu) \\ \mT(\mu)^\top & -\mu\eye\end{bmatrix},
\]
where $\mS(\mu)$ is the top-left $6$-primal-block matrix and $\mT(\mu)$ contains the two dual coupling columns of $\mJ(\mu)$.  Then for any $\mu>0$:
\begin{align}
\det(\mJ(\mu)) = \det(-\mu\eye)\cdot\det(\tilde{\mJ}(\mu)),\quad \tilde{\mJ}(\mu)\doteq \mS(\mu)+\tfrac{1}{\mu}\mT(\mu)\mT(\mu)^\top,
\label{eqn:singularrule}
\end{align}
so $\mJ(\mu)$ is singular $\iff$ $\tilde{\mJ}(\mu)$ is singular.  The matrix $\tilde{\mJ}(\mu)$ is real-symmetric and varies continuously in $\mu$ for $\mu>0$.

Ordering the primal rows as $(\vx_1,\vy_1,\vx_2,\vy_2,\vx_3,\vy_3)$ and the dual columns as $(\vlambda_2,\vlambda_3)$, the coupling matrix $\mT(\mu)$ reads
\[
\mT(\mu)=
\begin{bmatrix}
-s_1\eye & -s_1\eye \\
\mzero   & \mzero   \\
 s_1\eye & \mzero   \\
\mzero   & \mzero   \\
\mzero   &  s_1\eye \\
\mzero   & \mzero
\end{bmatrix},
\quad s_1\doteq\sqrt{(1+\mu)\rho},
\]
where column $j-1$ encodes the constraint $\vx_j-\vx_1=\vzero$ via $-s_1\eye$ in the $\vx_1$-rows and $+s_1\eye$ in the $\vx_j$-rows ($j=2,3$).

\begin{remark}[Intuition for the consensus test direction]\label{rem:intuition:consensus}
For the $J=3$ calculation, the test direction $\vd=(\vd_{\vx},\vd_{\vy}^1,\vd_{\vx},\vd_{\vy}^2,\vd_{\vx},\vd_{\vy}^3,\vzero,\vzero)$ must satisfy two conditions at once: it should lie in the null space of the consensus coupling so that the $\rho$-penalty cancels, and it should expose negative curvature of the block-diagonal Hessian $\mathrm{diag}(\nabla^2 f_1,\nabla^2 f_2,\nabla^2 f_3)$.
Both are possible because a strict-saddle KKT point of \eqref{eqn:problemC} is feasible and therefore satisfies $\vx_1^\star=\vx_2^\star=\vx_3^\star$, so the negative-curvature direction from \eqref{eqn:second} may be chosen with a common $\vx$-component $\vd_{\vx}$ across the three agents.
This equal-component structure places the $\vx$-stack $(\vd_{\vx},\vd_{\vx},\vd_{\vx})$ in the kernel of the three-node star-graph Laplacian, so $\mT(\mu)^{\top}\vd_\mathrm{p}=\vzero$ for every $\mu$, where \(\vd_{\mathrm p}\) denotes the primal part of \(\vd\).
Equivalently, with $\vd_{\vx,\mathrm{stack}}=(\vd_{\vx},\vd_{\vx},\vd_{\vx})$, the consensus penalty contributes $\rho\,\vd_{\vx,\mathrm{stack}}^{\top}(\mL_{\mathrm{star}}\otimes\eye_n)\vd_{\vx,\mathrm{stack}}=0$ on this equal-$\vx$ direction.
At $\mu=0$ only the three Hessian blocks $\nabla^2 f_1,\nabla^2 f_2,\nabla^2 f_3$ survive, recovering the strict-saddle negativity from \eqref{eqn:second}.
In contrast to the two-block ADMM case (\Cref{rem:intuition:ADMM}), no $\mu$-dependent rescaling of the $\vy$-components is needed: the off-diagonal $\rho$-terms couple only $\vx$-blocks, and setting the dual components to zero decouples the primal form from the Schur complement correction $\tfrac{1}{\mu}\mT\mT^{\top}$.
\end{remark}

By feasibility of the consensus constraints in \eqref{eqn:problemC}, every stationary point, hence every strict saddle, satisfies $\vx_1^\star=\vx_2^\star=\vx_3^\star$. Therefore the negative-curvature direction from \eqref{eqn:second} can be chosen with all $\vx$-components equal to a common~$\vd_{\vx}$. Indeed, the constraints $\vx_j=\vx_1$ for $j=2,3$ force every feasible perturbation to satisfy $\delta\vx_j=\delta\vx_1$, so any direction witnessing the strict-saddle condition \eqref{eqn:second} must already have this equal-$\vx$ form.
Set
\[
\vd=(\vd_{\vx},\vd_{\vy}^1,\vd_{\vx},\vd_{\vy}^2,\vd_{\vx},\vd_{\vy}^3,\vzero,\vzero)\neq\vzero,
\]
where $\vd_{\vx},\vd_{\vy}^1,\vd_{\vy}^2,\vd_{\vy}^3$ satisfy \eqref{eqn:second} and the dual components are zero. Write $\vd_\mathrm{p}=(\vd_{\vx},\vd_{\vy}^1,\vd_{\vx},\vd_{\vy}^2,\vd_{\vx},\vd_{\vy}^3)$ for the primal part of $\vd$.
Since each column of $\mT(\mu)$ has a $-s_1\eye$ in the $\vx_1$-rows and a $+s_1\eye$ in exactly one other $\vx$-block, and all $\vx$-components of $\vd$ equal $\vd_{\vx}$, we get
\[
\mT(\mu)^\top\vd_\mathrm{p} = \begin{pmatrix}(-s_1\eye)\vd_{\vx}+(s_1\eye)\vd_{\vx}\\(-s_1\eye)\vd_{\vx}+(s_1\eye)\vd_{\vx}\end{pmatrix} = \vzero \quad\text{for all }\mu\ge0.
\]
Setting $\phi(\mu)\doteq\vd^\top\mJ(\mu)\vd$ and writing $\vd=(\vd_\mathrm{p},\vzero)$, we expand using the block structure of $\mJ(\mu)$:
\[
\phi(\mu) = \vd_\mathrm{p}^\top \mS(\mu)\vd_\mathrm{p} + 2\,\vd_\mathrm{p}^\top \mT(\mu)\cdot\vzero - \mu\|\vzero\|^2 = \vd_\mathrm{p}^\top \mS(\mu)\vd_\mathrm{p}.
\]
Since $\mT(\mu)^\top\vd_\mathrm{p}=\vzero$, the Schur complement formula \eqref{eqn:singularrule} gives 
\[\vd_\mathrm{p}^\top\tilde{\mJ}(\mu)\vd_\mathrm{p}=\vd_\mathrm{p}^\top \mS(\mu)\vd_\mathrm{p}+\tfrac{1}{\mu}\|\mT(\mu)^\top\vd_\mathrm{p}\|^2=\vd_\mathrm{p}^\top \mS(\mu)\vd_\mathrm{p},\]
so
\[
\phi(\mu) = \vd^\top\mJ(\mu)\vd = \vd_\mathrm{p}^\top \mS(\mu)\vd_\mathrm{p} = \vd_\mathrm{p}^\top\tilde{\mJ}(\mu)\vd_\mathrm{p}
\quad\text{for all }\mu>0,
\]
and by continuity of $\mS(\mu)$ also at $\mu=0$.

We start by showing that
\[\phi(0)<0.\]
Substituting $\mu=0$ into $\mJ(\mu)$ and expanding the quadratic form $\phi(0)=\vd^\top\mJ(0)\vd$ (using that the dual components of $\vd$ are zero and the $\vx$-components repeat across agents) gives
\begin{align*}
\phi(0)&=\vd^\top\mJ(0)\vd\\
&=\begin{bmatrix}
\vd_{\vx} \\ \vd_{\vy}^1 \\ \vd_{\vx} \\ \vd_{\vy}^2 \\ \vd_{\vx} \\ \vd_{\vy}^3 \\ \vzero \\ \vzero\end{bmatrix}^\top
\begin{bmatrix}
\mF_{\vx\vx}^1+2\rho\eye&
\mF_{\vx\vy}^{1}&
-\rho\eye&
\mzero&
-\rho\eye&
\mzero&
-\sqrt{\rho}\eye&
-\sqrt{\rho}\eye
\\
\mF_{\vy\vx}^{1}&
\mF_{\vy\vy}^{1}&
\mzero&
\mzero&
\mzero&
\mzero&
\mzero&
\mzero
\\
-\rho\eye&
\mzero&
\mF_{\vx\vx}^2+\rho\eye&
\mF_{\vx\vy}^{2}&
\mzero&
\mzero&
\sqrt{\rho}\eye&
\mzero
\\
\mzero&
\mzero&
\mF_{\vy\vx}^2&
\mF_{\vy\vy}^2&
\mzero&
\mzero&
\mzero&
\mzero
\\
-\rho\eye&
\mzero&
\mzero&
\mzero&
\mF_{\vx\vx}^3+\rho\eye&
\mF_{\vx\vy}^{3}&
\mzero&
\sqrt{\rho}\eye
\\
\mzero&
\mzero&
\mzero&
\mzero&
\mF_{\vy\vx}^3&
\mF_{\vy\vy}^3&
\mzero&
\mzero
\\
-\sqrt{\rho}\eye&
\mzero&
\sqrt{\rho}\eye&
\mzero&
\mzero&
\mzero&
\mzero&
\mzero
\\
-\sqrt{\rho}\eye&
\mzero&
\mzero&
\mzero&
\sqrt{\rho}\eye&
\mzero&
\mzero&
\mzero
\end{bmatrix}
\begin{bmatrix}
\vd_{\vx} \\ \vd_{\vy}^1 \\ \vd_{\vx} \\ \vd_{\vy}^2 \\ \vd_{\vx} \\ \vd_{\vy}^3 \\ \vzero \\ \vzero\end{bmatrix}
\\
&=\begin{bmatrix}
\vd_{\vx} \\ \vd_{\vy}^1 \\ \vd_{\vx} \\ \vd_{\vy}^2 \\ \vd_{\vx} \\ \vd_{\vy}^3 \end{bmatrix}^\top
\begin{bmatrix}
\mF_{\vx\vx}^1+2\rho\eye&
\mF_{\vx\vy}^{1}&
-\rho\eye&
\mzero&
-\rho\eye&
\mzero
\\
\mF_{\vy\vx}^{1}&
\mF_{\vy\vy}^{1}&
\mzero&
\mzero&
\mzero&
\mzero
\\
-\rho\eye&
\mzero&
\mF_{\vx\vx}^2+\rho\eye&
\mF_{\vx\vy}^{2}&
\mzero&
\mzero
\\
\mzero&
\mzero&
\mF_{\vy\vx}^2&
\mF_{\vy\vy}^2&
\mzero&
\mzero
\\
-\rho\eye&
\mzero&
\mzero&
\mzero&
\mF_{\vx\vx}^3+\rho\eye&
\mF_{\vx\vy}^{3}
\\
\mzero&
\mzero&
\mzero&
\mzero&
\mF_{\vy\vx}^3&
\mF_{\vy\vy}^3
\end{bmatrix}
\begin{bmatrix}
\vd_{\vx} \\ \vd_{\vy}^1 \\ \vd_{\vx} \\ \vd_{\vy}^2 \\ \vd_{\vx} \\ \vd_{\vy}^3 \end{bmatrix}
\\
&=\begin{bmatrix}
\vd_{\vx} \\ \vd_{\vy}^1 \\ \vd_{\vx} \\ \vd_{\vy}^2 \\ \vd_{\vx} \\ \vd_{\vy}^3 \end{bmatrix}^\top
\begin{bmatrix}
\mF_{\vx\vx}^1&
\mF_{\vx\vy}^{1}&
\mzero&
\mzero&
\mzero&
\mzero
\\
\mF_{\vy\vx}^{1}&
\mF_{\vy\vy}^{1}&
\mzero&
\mzero&
\mzero&
\mzero
\\
\mzero&
\mzero&
\mF_{\vx\vx}^2&
\mF_{\vx\vy}^{2}&
\mzero&
\mzero
\\
\mzero&
\mzero&
\mF_{\vy\vx}^2&
\mF_{\vy\vy}^2&
\mzero&
\mzero
\\
\mzero&
\mzero&
\mzero&
\mzero&
\mF_{\vx\vx}^3&
\mF_{\vx\vy}^{3}
\\
\mzero&
\mzero&
\mzero&
\mzero&
\mF_{\vy\vx}^3&
\mF_{\vy\vy}^3
\end{bmatrix}
\begin{bmatrix}
\vd_{\vx} \\ \vd_{\vy}^1 \\ \vd_{\vx} \\ \vd_{\vy}^2 \\ \vd_{\vx} \\ \vd_{\vy}^3 \end{bmatrix}
\\
&=\sum_{i=1}^3 [\vd_{\vx}^\top~\vd_{\vy}^{i\top}]\nabla^2f_i(\vx_i^\star,\vy_i^\star)\begin{bmatrix}
	\vd_{\vx} \\ \vd_{\vy}^i
\end{bmatrix}<0\quad \text{(by \eqref{eqn:second})}.
\end{align*}

Second, we will show that
\[
\lim\limits_{\mu\to\infty}\frac{\phi(\mu)}{\mu}>0.
\]
To compute the limit, divide $\mJ(\mu)$ by $\mu$ and use that $s_1=\sqrt{(1+\mu)\rho}$ and $s_2=\sqrt{1+\mu}$, so the off-diagonal coupling blocks are $O(\sqrt{\mu})$ and therefore vanish after scaling by $1/\mu$, while each diagonal primal block is of the form $\mF_{**}^j+\text{const}\cdot\rho\eye+\mu\mGamma_{**}^j$ and hence converges to $\mGamma_{**}^j$. This yields
\begin{align*}
\lim\limits_{\mu\to\infty}\frac{\phi(\mu)}{\mu}=\vd^\top\left(\lim\limits_{\mu\to\infty}\frac{\mJ(\mu)}{\mu}\right)\vd
&=\begin{bmatrix}
\vd_{\vx} \\ \vd_{\vy}^1 \\ \vd_{\vx} \\ \vd_{\vy}^2 \\ \vd_{\vx} \\ \vd_{\vy}^3 \\ \vzero \\ \vzero\end{bmatrix}^\top
\begin{bmatrix}
\mGamma_{\vx\vx}^1
&&&&&&&\\
&\mGamma_{\vy\vy}^1
&&&&&&
\\
&&\mGamma_{\vx\vx}^2
&&&&&
\\
&&&\mGamma_{\vy\vy}^2
&&&&
\\
&&&&\mGamma_{\vx\vx}^3
&&&
\\
&&&&&\mGamma_{\vy\vy}^3
&&
\\
&&&&&&-\eye &
\\
&&&&&&&-\eye
\end{bmatrix}
\begin{bmatrix}
\vd_{\vx} \\ \vd_{\vy}^1 \\ \vd_{\vx} \\ \vd_{\vy}^2 \\ \vd_{\vx} \\ \vd_{\vy}^3 \\ \vzero \\ \vzero\end{bmatrix}
\\
&=\begin{bmatrix}
\vd_{\vx} \\ \vd_{\vy}^1 \\ \vd_{\vx} \\ \vd_{\vy}^2 \\ \vd_{\vx} \\ \vd_{\vy}^3
\end{bmatrix}^\top
\begin{bmatrix}
\mGamma_{\vx\vx}^1
&&&&&
\\
&\mGamma_{\vy\vy}^1
&&&&
\\
&&\mGamma_{\vx\vx}^2
&&&
\\
&&&\mGamma_{\vy\vy}^2
&&
\\
&&&&\mGamma_{\vx\vx}^3
&
\\
&&&&&\mGamma_{\vy\vy}^3
\end{bmatrix}
\begin{bmatrix}
\vd_{\vx} \\ \vd_{\vy}^1 \\ \vd_{\vx} \\ \vd_{\vy}^2 \\ \vd_{\vx} \\ \vd_{\vy}^3 \end{bmatrix}
>0.
\end{align*}
Here, in view of \eqref{definition:notations:1},  the last line is because
\begin{align*}
\mGamma_{\vx\vx}^1&\doteq 2\rho\eye+\frac{1}{\eta}\mH_{\vx\vx}^1+\mF_{\vx\vx}^1\succ0,
\quad
\mGamma_{\vx\vx}^j\doteq \rho\eye+\frac{1}{\eta}\mH_{\vx\vx}^j+\mF_{\vx\vx}^j\succ0,~~\forall j\ge 2,
\quad
\mGamma_{\vy\vy}^j\doteq \frac{1}{\eta}\mH_{\vy\vy}^j+\mF_{\vy\vy}^j\succ0,~~\forall j\ge 1,
\end{align*}
which follows from relative bi-smoothness, strong bi-convexity of the kernels, and $\eta<\min_j(1/L_j^x,1/L_j^y)$.

Since $\phi(\mu)$ is a continuous real function of $\mu$, the above limit implies $\phi(N)>0$ for some sufficiently large $N$.

Although $\tilde{\mJ}(\mu)$ is defined only for $\mu>0$, the identity above gives $\vd_\mathrm{p}^{\top}\tilde{\mJ}(\mu)\vd_\mathrm{p}=\vd_\mathrm{p}^{\top}\mS(\mu)\vd_\mathrm{p}$ along this test direction, and the right-hand side has a continuous extension to $\mu=0$. Therefore the two cases above translate to statements about $\lambda_{\min}(\tilde{\mJ}(\mu))$:
\begin{itemize}[leftmargin=*]
\item \textit{Small $\mu$}: $\phi(0)<0$ and continuity give $\phi(\mu)<0$ for small $\mu>0$, so $\lambda_{\min}(\tilde{\mJ}(\mu))\le\vd_\mathrm{p}^{\top}\tilde{\mJ}(\mu)\vd_\mathrm{p}/\|\vd_\mathrm{p}\|^2<0$.
\item \textit{Large $\mu$}:
Since $s_1=\sqrt{(1+\mu)\rho}$ and $s_2=\sqrt{1+\mu}$, both coefficients are $O(\sqrt\mu)$.
The entries of $\mT(\mu)$ are multiples of $s_1$, so $\mT(\mu)\mT(\mu)^\top=O(\mu)$ and the term $(1/\mu^2)\mT(\mu)\mT(\mu)^\top=O(1/\mu)\to0$.
For $\mS(\mu)$: its diagonal blocks are $\mF^j_{**}+\text{const}\cdot\rho\eye+\mu\mGamma^j_{**}$, so $\mS(\mu)/\mu$ has diagonal blocks $\to\mGamma^j_{**}$, while each off-diagonal block is $O(\sqrt\mu)$, giving $\mS(\mu)/\mu$ off-diagonal terms of order $O(1/\sqrt\mu)\to0$.
Therefore
\[
\frac{\tilde{\mJ}(\mu)}{\mu}=\frac{\mS(\mu)}{\mu}+\frac{1}{\mu^2}\mT(\mu)\mT(\mu)^\top
\;\xrightarrow{\mu\to\infty}\;
\mathrm{diag}\bigl(\mGamma_{\vx\vx}^1,\mGamma_{\vy\vy}^1,\mGamma_{\vx\vx}^2,\mGamma_{\vy\vy}^2,\mGamma_{\vx\vx}^3,\mGamma_{\vy\vy}^3\bigr)\succ0,
\]
where the positive definiteness is from \eqref{definition:notations:1} and the stepsize condition.
Since the limit is $\succ0$, there exist $\epsilon>0$ and $N>0$ such that $\tilde{\mJ}(\mu)/\mu\succ\epsilon\eye$ for all $\mu>N$, hence $\tilde{\mJ}(\mu)\succ\epsilon\mu\eye\succ0$, and in particular $\lambda_{\min}(\tilde{\mJ}(N))>0$.
\end{itemize}
Since $\tilde{\mJ}(\mu)$ is a continuous real-symmetric matrix for $\mu>0$, its eigenvalues vary continuously with $\mu$ (see \cite[Theorem~5.1]{kato2013perturbation}).  By the intermediate value theorem applied to $\lambda_{\min}(\tilde{\mJ}(\mu))$, there exists $\mu^\star\in(0,N)$ such that $\lambda_{\min}(\tilde{\mJ}(\mu^\star))=0$, i.e., $\tilde{\mJ}(\mu^\star)$ is singular, and by \eqref{eqn:singularrule} so is $\mJ(\mu^\star)$.

The preceding paragraphs complete the detailed $J=3$ instability calculation. We now record why the same argument extends to arbitrary $J\ge2$.
\begin{lemma}[Extension to general $J$]\label{rem:cadmm:generalJ}
Under the hypotheses of \Cref{thm:cadmm}, for any $J\ge 2$:
\begin{enumerate}[leftmargin=*]
\item $\det(Dg(\vz))\neq 0$ for all $\vz\in\Omega$;
\item every strict-saddle KKT point of \eqref{eqn:problemC} in $\Omega$ is an unstable fixed point of $g$.
\end{enumerate}
\end{lemma}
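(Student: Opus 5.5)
The plan is to repeat the $J=3$ computation verbatim with index sets, exploiting the star-shaped block sparsity. For part 1, I decompose one iteration as
\[
g=h_{\vlambda}\circ h_{\vy_J}\circ h_{\vx_J}\circ\cdots\circ h_{\vy_1}\circ h_{\vx_1},
\]
which gives $2J+1$ elementary maps. Each $h_{\vx_j}$ changes only the $\vx_j$ component, so its Jacobian is the identity except in the $\vx_j$ block row. Hence $\det(Dh_{\vx_j})=\det\bigl((\mGamma^j_{\vx\vx})^{-1}\tfrac1\eta\mH^j_{\vx\vx}\bigr)$. Here $\mGamma^1_{\vx\vx}=\mF^1_{\vx\vx}+(J-1)\rho\eye+\tfrac1\eta\mH^1_{\vx\vx}$ and $\mGamma^j_{\vx\vx}=\mF^j_{\vx\vx}+\rho\eye+\tfrac1\eta\mH^j_{\vx\vx}$ for $j\ge2$. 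Both are positive definite by relative bi-smoothness, strong bi-convexity and $\eta<\min_j\min(1/L^x_j,1/L^y_j)$, and $\mH^j_{\vx\vx}\succ0$. The same argument applies to $h_{\vy_j}$. The dual map $h_{\vlambda}$ is block lower triangular with identity diagonal, so its determinant is $1$. The chain rule then gives $\det(Dg)\neq0$ on $\Omega$.

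For part 2, I first check that a strict-saddle KKT point satisfies all the optimality equations (the general-$J$ analogues of \eqref{eqn:optu1}--\eqref{eqn:optlam3}), and therefore is a fixed point by uniqueness of the subproblem minimizers. Differentiating these equations at the fixed point gives $Dg=\eye-\mM^{-1}\mN$. Here $\mM$ is block lower triangular: it has diagonal blocks $\mGamma^j_{\vx\vx},\mGamma^j_{\vy\vy},-\eye$, with $-\rho\eye$ in the $(\vx_j,\vx_1)$ positions and $\rho\eye$ entries in the dual rows. $\mN$ contains the Hessians plus the star-Laplacian penalty $\rho(\mL_{\rm star}\otimes\eye)$ on the $\vx$-blocks, together with the coupling $\pm\eye$ between $\vx$-blocks and duals. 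As before, I need $\det(\mN+\mu\mM)=0$ for some $\mu>0$. I apply the same positive row/column scaling: $s_2=\sqrt{1+\mu}$ on the $\vy_j$ and non-hub $\vx_j$ blocks and $s_1=\sqrt{(1+\mu)\rho}$ on the duals. This yields a symmetric $\mJ(\mu)=\bigl[\begin{smallmatrix}\mS(\mu)&\mT(\mu)\\ \mT(\mu)^\top&-\mu\eye\end{smallmatrix}\bigr]$, where $\mT(\mu)=s_1(\mE^\top\otimes\eye)$ and $\mE$ is the $(J-1)\times J$ incidence matrix with rows $e_j-e_1$, padded with zero $\vy$-rows. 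The step I expect to be the main obstacle is checking that the scaling really symmetrizes every hub--peripheral pair simultaneously. The $(\vx_1,\vx_j)$ entry is $-\rho$ and the $(\vx_j,\vx_1)$ entry is $-(1+\mu)\rho$, so scaling row and column $j$ by $s_2$ must equalize them to $-s_2\rho$. The dual entries $\mp\eye$ versus $\mp(1+\mu)\rho\eye$ must likewise match to $\mp s_1\eye$. Because each pair involves only the hub and one peripheral agent, these checks are agent-wise identical to the $J=3$ case, so I expect them to go through.

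Next I take the Schur complement $\tilde\mJ(\mu)=\mS(\mu)+\tfrac1\mu\mT\mT^\top$, so that $\det\mJ(\mu)=(-\mu)^{p(J-1)}\det\tilde\mJ(\mu)$. For the test direction $\vd_{\rm p}=(\vd_\vx,\vd_\vy^1,\ldots,\vd_\vx,\vd_\vy^J)$, I use that feasible perturbations have equal $\vx$-components. The stacked $\vx$-part then lies in $\ker(\mE\otimes\eye)$, so $\mT^\top\vd_{\rm p}=\vzero$ and the Laplacian penalty terms in $\mS(0)$ cancel. This gives $\vd_{\rm p}^\top\tilde\mJ(\mu)\vd_{\rm p}=\vd_{\rm p}^\top\mS(\mu)\vd_{\rm p}$, which extends continuously to $\mu=0$ with value $\sum_j[\vd_\vx;\vd^j_\vy]^\top\nabla^2f_j[\vd_\vx;\vd^j_\vy]<0$, so $\lambda_{\min}(\tilde\mJ(\mu))<0$ for small $\mu$. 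For large $\mu$, all off-diagonal blocks of $\mS$ are $O(\sqrt\mu)$ and $\mT\mT^\top=O(\mu)$. Hence $\tilde\mJ(\mu)/\mu\to\mathrm{diag}(\mGamma^1_{\vx\vx},\mGamma^1_{\vy\vy},\ldots,\mGamma^J_{\vy\vy})\succ0$. Continuity of eigenvalues \cite[Theorem~5.1]{kato2013perturbation} then gives some $\mu^\star>0$ with $\tilde\mJ(\mu^\star)$ singular. Thus $\mPhi$ has eigenvalue $-\mu^\star$ and $Dg$ has eigenvalue $1+\mu^\star>1$. For $J=2$ the same steps apply with a single non-hub block.
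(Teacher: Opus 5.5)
Your outline is the paper's proof nearly step for step. Part~1 (the $2J+1$ factorization, with hub block $\mF^1_{\vx\vx}+(J-1)\rho\eye+\frac1\eta\mH^1_{\vx\vx}$) and the fixed-point identification are fine. The gap is the step you flagged yourself: no positive row/column scaling can symmetrize $\mN+\mu\mM$. The obstruction already sits inside a single hub--peripheral pair, so appealing to the $J=3$ computation does not help; the paper's displayed $J=3$ scaling has the same defect.

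Write $k_{ab}$ for the $(a,b)$ entry of $\mN+\mu\mM$. Replacing $\mN+\mu\mM$ by $\mD_L(\mN+\mu\mM)\mD_R$, with positive diagonal $\mD_L,\mD_R$, multiplies each ratio $k_{ab}/k_{ba}$ by $\sigma_a/\sigma_b$, where $\sigma_a=(\mD_L)_{aa}/(\mD_R)_{aa}$. Hence the product of these ratios around any cycle is invariant, and for a symmetric matrix that product equals $1$.

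Take one coordinate each of $\vx_1,\vx_j,\vlambda_j$ and use exactly the entries you list. Around $\vx_1\to\vx_j\to\vlambda_j\to\vx_1$ the ratios are $\frac{-\rho}{-(1+\mu)\rho}$, $\frac{1}{(1+\mu)\rho}$ and $\frac{-(1+\mu)\rho}{-1}$. Their product is $\frac{1}{1+\mu}\neq1$ for $\mu>0$. Equivalently, the two dual pairs force $\sigma_{\vx_1}=\sigma_{\vx_j}$, while the direct pair forces $\sigma_{\vx_j}=\sigma_{\vx_1}/(1+\mu)$.

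So your symmetric $\mJ(\mu)$ is not a rescaling of $\mN+\mu\mM$, and $\det\mJ(\mu^\star)=0$ does not give $\det(\mPhi+\mu^\star\eye)=0$. A $3\times3$ check makes this concrete. For $J=2$, scalar $\vx_j$ and the $\vy$-blocks dropped, one gets $\det(\mN+\mu\mM)-\det\mJ(\mu)=(1+\mu)\rho^2(\sqrt{1+\mu}-1)^2$ identically in the Hessian entries. The two zero sets differ, so your intermediate-value argument finds a root of the wrong determinant.

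The repair is to reverse the order, as in the two-block proof: Schur-complement the dual block first, then symmetrize. The dual diagonal block of $\mN+\mu\mM$ is already $-\mu\eye$, so $\det(\mN+\mu\mM)=(-\mu)^{n(J-1)}\det\mC(\mu)$. Here $\mC(\mu)$ is the primal block plus $\tfrac{(1+\mu)\rho}{\mu}\,\mL_{\rm star}\otimes\eye$ on the $\vx$-blocks. In $\mC(\mu)$:
\begin{itemize}
\item the $(\vx_1,\vx_j)$ and $(\vx_j,\vx_1)$ blocks are $-(2+\frac1\mu)\rho\eye$ and $-(2+\mu+\frac1\mu)\rho\eye$, the two-block coefficients;
\item the $(\vx_j,\vy_j)$ and $(\vy_j,\vx_j)$ blocks are $\mF^j_{\vx\vy}$ and $(1+\mu)\mF^j_{\vy\vx}$;
\item the $\rho$-coefficients on the $\vx_1$ and $\vx_j$ diagonals are $(J-1)(2+\mu+\frac1\mu)$ and $2+\mu+\frac1\mu$.
\end{itemize}
The off-diagonal pattern is now a tree: the star on $\vx_1,\dots,\vx_J$, with each $\vy_j$ pendant to $\vx_j$. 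There is no cycle obstruction, so the similarity by $\mathrm{diag}(\eye,\sqrt{1+\mu}\,\eye,\alpha\eye,\alpha\sqrt{1+\mu}\,\eye,\dots)$ symmetrizes it. Here the ordering is $(\vx_1,\vy_1,\vx_2,\vy_2,\dots)$ and $\alpha=\sqrt{(2+\mu+\frac1\mu)/(2+\frac1\mu)}$.

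Use the test vector $(\vd_\vx,\vd_\vy^1,\alpha\vd_\vx,\vd_\vy^2,\dots,\alpha\vd_\vx,\vd_\vy^J)$. The consensus terms sum to $(J-1)\rho\frac{\mu(1+\mu)^2}{1+2\mu}\|\vd_\vx\|_2^2\to0$. The remaining terms tend to the strict-saddle form as $\mu\to0^+$. All off-diagonal blocks are $O(\sqrt\mu)$, so the large-$\mu$ limit $\mathrm{diag}(\mGamma^1_{\vx\vx},\mGamma^1_{\vy\vy},\dots,\mGamma^J_{\vy\vy})\succ0$ and the rest of your argument go through unchanged.
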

\begin{proof}
The proof uses exactly the same two ingredients as the $J=3$ case.

\emph{Nonsingularity.}
One full iteration decomposes into $2J+1$ elementary maps: one $\vx_j$-update and one $\vy_j$-update for each agent, followed by the dual update. Each Jacobian is block triangular with diagonal blocks given by the same implicit-function-theorem matrices as in the displayed $J=3$ computation, namely $\mGamma_{\vx\vx}^j$, $\mGamma_{\vy\vy}^j$, and identity blocks for untouched variables and dual updates. Since every $\mGamma$ block is positive definite by relative bi-smoothness and the stepsize condition, every elementary Jacobian is nonsingular; hence $\det(Dg)\neq 0$ for arbitrary $J$ by the chain rule.

\emph{Instability.}
The coupling matrix $\mT(\mu)\in\R^{2Jn\times(J-1)n}$ has column $j-1$ ($j=2,\ldots,J$) with $-s_1\eye$ in the $\vx_1$-block and $+s_1\eye$ in the $\vx_j$-block (all other blocks zero). With the test direction $\vd=(\vd_\vx,\vd_\vy^1,\ldots,\vd_\vx,\vd_\vy^J,\vzero,\ldots,\vzero)$ (all $\vx$-components equal to $\vd_\vx$), each column of $\mT(\mu)$ contributes $(-s_1\eye)\vd_\vx+(s_1\eye)\vd_\vx=\vzero$, hence $\mT(\mu)^{\top}\vd_\mathrm{p}=\vzero$.

The general-$J$ structure of the Schur-reduced matrix $\tilde{\mJ}(\mu)=\mS(\mu)+\tfrac{1}{\mu}\mT(\mu)\mT(\mu)^{\top}\in\R^{2Jn\times 2Jn}$ is as follows. Since $\mT(\mu)$ has nonzero entries only in the $\vx$-rows, the additive correction $\tfrac{1}{\mu}\mT(\mu)\mT(\mu)^{\top}$ affects only $\vx$--$\vx$ blocks, with all $\vy$-rows and $\vy$-columns equal to zero. Restricting to the $\vx$-coordinates $(\vx_1,\vx_2,\ldots,\vx_J)$, we obtain
\[
\begin{bmatrix}
\frac{1}{\mu}\mT(\mu)\mT(\mu)^{\top}
\end{bmatrix}_{\vx\text{-blocks},\vx\text{-blocks}}
=\frac{s_1^2}{\mu}
\begin{bmatrix}
(J{-}1)\eye & -\eye & -\eye & \cdots & -\eye\\
-\eye & \eye & \mzero & \cdots & \mzero\\
-\eye & \mzero & \eye & \cdots & \mzero\\
\vdots & \vdots & \vdots & \ddots & \vdots\\
-\eye & \mzero & \mzero & \cdots & \eye
\end{bmatrix}
=\frac{s_1^2}{\mu}\,\mL_{\mathrm{star}}\otimes\eye_n,
\]
where $\mL_{\mathrm{star}}\in\R^{J\times J}$ is the graph Laplacian of the star with hub node~$1$. The constant vector $(\vd_{\vx},\ldots,\vd_{\vx})$ lies in $\ker(\mL_{\mathrm{star}}\otimes\eye_n)$, so $\tfrac{1}{\mu}\mT(\mu)\mT(\mu)^{\top}$ contributes nothing to $\vd_\mathrm{p}^{\top}\tilde{\mJ}(\mu)\vd_\mathrm{p}$ when all $\vx$-components of $\vd$ are equal. Moreover, since $s_1^2=(1+\mu)\rho$, the entries of $\mT(\mu)$ are $O(\sqrt{\mu})$, so $\mT(\mu)\mT(\mu)^{\top}=O(\mu)$ and $\tfrac{1}{\mu}\mT(\mu)\mT(\mu)^{\top}=O(1)$. The cross-agent entries of $\tilde{\mJ}(\mu)$ are therefore at most $O(\sqrt{\mu})$ (from $\mS(\mu)$) plus $O(1)$ (from $\tfrac{1}{\mu}\mT\mT^{\top}$), both of which are $o(\mu)$. Therefore, dividing by $\mu$ and sending $\mu\to\infty$ yields the block-diagonal limit
\[
\frac{\tilde{\mJ}(\mu)}{\mu}\;\xrightarrow{\mu\to\infty}\; \mathrm{diag}\bigl(\mGamma_{\vx\vx}^1,\mGamma_{\vy\vy}^1,\ldots,\mGamma_{\vx\vx}^J,\mGamma_{\vy\vy}^J\bigr)\succ0,
\]
where each $\mGamma$ block is positive definite by relative bi-smoothness and the stepsize condition, exactly as in the $J=3$ case. For the $\rho$-cancellation at $\mu=0$, write $\vd_{\vx,\mathrm{stack}}=(\vd_\vx,\ldots,\vd_\vx)$. The consensus penalty contribution is $\rho\,\vd_{\vx,\mathrm{stack}}^{\top}(\mL_{\mathrm{star}}\otimes\eye_n)\vd_{\vx,\mathrm{stack}}=\rho\sum_{j=2}^{J}\|\vd_\vx-\vd_\vx\|_2^2=0$, because $\vd_{\vx,\mathrm{stack}}$ is in the kernel of the star-graph Laplacian. Combined with the strict-saddle negativity of the corresponding quadratic form at $\mu=0$, this gives the same sign change and hence the same intermediate-value argument for arbitrary $J\ge 2$.
\end{proof}
This completes the proof that strict-saddle KKT points in $\Omega$ are unstable fixed points of the fixed-point map of \Cref{alg:cadmm}.
By \Cref{thm:jason}, the set of initial points in $\Omega$ from which the iteration converges to such a strict-saddle KKT point has Lebesgue measure zero; since random initialization is absolutely continuous with respect to Lebesgue measure, the probability that the iterates converge to such a strict-saddle KKT point is zero.
\end{proof}

\end{document}